\documentclass[12pt]{amsart}
\usepackage{amssymb}
\usepackage{enumerate}
\usepackage[all]{xy}
\usepackage{pb-diagram}

\pagestyle{myheadings}

\newtheorem{conjecture}[equation]{Conjecture}
\newtheorem{thm}[equation]{Theorem}
\newtheorem{prop}[equation]{Proposition}
\newtheorem{cor}[equation]{Corollary}

\newtheorem{lemma}[equation]{Lemma}

\numberwithin{equation}{section}


\newcommand{\Q}{\mathbb Q}
\newcommand{\Z}{\mathbb Z}
\newcommand{\R}{\mathbb R}

\newcommand{\Sc}{\mathbb S}
\newcommand{\VV}{\mathbb V}
\newcommand{\WW}{\mathbb W}

\newcommand{\F}{\mathbb F}

\newcommand{\A}{\mathbb A}

\newcommand{\N}{\mathbb N}

\def\Hom{{\rm Hom }}
\def\Aut{{\rm Aut}}
\def\ord{{\rm ord}}

\def\SL{{\rm SL}}
\def\disc{{\rm disc}}

\def\Sp{{\rm Sp}}

\def\GU{{\rm GU}}

\newcommand{\U}{{\rm U}}

\def\GL{{\rm GL}}

\def\Sp{{\rm Sp}}

\def\A{{\mathbb A}}

\def\CC{{\mathbb C}}

\def\R{{\mathbb R}}

\def\Z{{\mathbb Z}}

\setlength{\oddsidemargin}{0.2in}
\setlength{\evensidemargin}{0.2in}
\setlength{\textwidth}{6.1in}

\title[Restriction Problems for Classical Groups]{Restrictions of  representations of classical groups: examples}

\author{Wee Teck Gan, Benedict H. Gross and Dipendra Prasad}

\address{W.T.G.: Department of Mathematics, University of California at San Diego, 9500 Gilman Drive, La Jolla, 92093} \email{wgan@math.ucsd.edu}
\address{B.H.G: Department of Mathematics, Harvard University, 
Cambridge, MA 02138}\email{gross@math.harvard.edu}
\address{D.P.: School of Mathematics, Tata Institute of Fundamental
Research, Colaba, Mumbai-400005, INDIA}
\email{dprasad@math.tifr.res.in}

\begin{document}
\maketitle

\tableofcontents
\section{Introduction}

This paper is a sequel to [GGP], where we considered several restriction problems in the 
representation theory of classical groups over local and global fields.  
Assuming the Langlands-Vogan parameterization of irreducible representations, we formulated
 precise conjectures for the solutions of these restriction problems. In the local case, our conjectural answer is given in terms of Langlands parameters and certain natural symplectic root numbers associated to them. In the global case, the conjectural answer is expressed in terms of 
the central critical value or derivative of a global $L$-function. 
For the precise statements of the restriction problems and our conjectures, we refer the reader to [GGP].

\vskip 10pt

The conjectures for the case of special orthogonal groups were contained in the earlier papers [GP1] and [GP2] and were suggested by the results of Waldspurger [Wa1,2,3], Tunnell-Saito [Tu], [Sa],  and Prasad  [P1, 2, 3]
in certain low rank cases. Since then, there have been further results in the orthogonal case, both locally and globally; see, for example  [P4], [GR],  [GJS2], and [PT].  Most notably, in two recent preprints [Wa4] and [Wa5], Waldspurger has made spectacular progress towards to a full resolution of the local conjectures of [GP1, GP2].
\vskip 10pt

In this paper, we provide some evidence for the conjectures of [GGP] in the unitary case. More precisely, we shall consider the restriction problems in the following cases:

 \vskip 10pt

\begin{enumerate}[(i)]

\item the depth zero supercuspidal $L$-packets of DeBacker-Reeder[ DR], which are associated to tame regular discrete $L$-parameters;
\vskip 5pt

\item certain low rank cases, such as $\U(1) \times \U(1)$, $\U(1) \times \U(2)$, $\U(2) \times \U(2)$ and $\U(2) \times \U(3)$.
\end{enumerate}
\vskip 5pt

\noindent In each case, we shall establish [GGP, Conjecture 16.3].

 \vskip 15pt
 \noindent{\bf Acknowledgments:}  W. T. Gan is partially supported by NSF grant DMS-0801071. 
B. H. Gross is partially supported by NSF grant DMS 0901102. 
 D. Prasad was partially  supported by a  Clay Math Institute fellowship during the course of this work. We also thank P. Deligne, S. Kudla, G. Lusztig, M. Reeder, D. Rohrlich, and J.-L. Waldspurger for their help.
 \vskip 15pt
  \section{Discrete series parameters}\
 
We begin with  the computation of  the distinguished character in [GGP, Conjecture 16.3]
$$
\chi = \chi_N \times \chi_M : A_M \times A_N  \rightarrow  <\pm 1>,
$$
which is defined using local root numbers, for some discrete series parameters of the unitary groups
$G = \U(W) \times \U(W_0)$, associated to the quadratic extension of local fields $k/k_0$.
 
\vskip 10pt

In general,  these discrete series parameters have the form
\begin{eqnarray*}
M & = & \bigoplus_i M_i \\
N & = & \bigoplus_j N_j
\end{eqnarray*}
where the $M_i$ are distinct conjugate-symplectic representations and the $N_j$ are
distinct conjugate-orthogonal representations of the Weil-Deligne group of $k$. The
dimension of $M$ is even and the dimension of $N$ is odd. In this case, the centralizer
$C_M \times C_N$ of the Langlands parameter is finite.
\vskip 5pt

We will only consider the
case here where each $M_i = M(\alpha_i)$ and each $N_j = N(\beta_j)$ is one dimensional.
Then $\alpha_i$ is a character of $k^\times /\N k^\times$ with $\alpha_i|_{k_0^{\times}}=
\omega_{k/k_0}$, and $\beta_j$ is a character of $k^\times /k_0^\times$. In this case,
we have the component groups
\begin{eqnarray*}
A_M  & = & \bigoplus \Z/2\Z \cdot e_i \\
A_N  & = & \bigoplus \Z/2\Z \cdot f_j.
\end{eqnarray*}
These vector spaces have dimension equal to $\dim M$ and $\dim N$ over $\Z/2\Z$, which is as large as possible. We have
\begin{eqnarray*}
M^{e_i=-1} & = & M(\alpha_i) \\
N^{f_j=-1} & = & N(\beta_j).
\end{eqnarray*}

Fix a nontrivial additive character $\psi_0$ of $k$ which is trivial on $k_0$. By the definition
of the character $\chi$, we have the formulae
\begin{eqnarray*}
\chi(e_i) & = & \epsilon(M(\alpha_i) \otimes N, \psi_0) \\
\chi(f_j)  & = & \epsilon(M \otimes N(\beta_j), \psi_0).
\end{eqnarray*}
Using the additivity of the local epsilon factors, this becomes
\begin{eqnarray*}
\chi(e_i) & =& \prod_k \epsilon(\alpha_i\beta_k, \psi_0) \\
\chi(f_j) & =& \prod_k \epsilon(\alpha_k \beta_j, \psi_0).
\end{eqnarray*}
\vskip 10pt

Since the products $\alpha_i\beta_j$ are all conjugate-symplectic characters of $k^\times$, we
need a formula to compute their root numbers. We will do this is two different cases 
- when $k/k_0 = \CC/\R$, which we take up now,
and then when $k/k_0$ is unramified which we do in the next section.

\vskip 5pt

\begin{prop} \label{P:11.1}
Assume that $k_0 = \R$ and choose an isomorphism $z: k \rightarrow \CC$. Let
$$
 \alpha = z^{-2a} \cdot (z\bar{z})^a = (\bar{z}/z)^a
$$
be a conjugate-symplectic character of $k^{\times}$, where $a$  is a half integer, and let
$$
\psi_0 = e^{2 \pi i {\rm Tr}(iz)} = e^{2 \pi (\bar{z} - {z})}
$$
Then 
\[  \epsilon(\alpha, \psi_0) = \begin{cases}
\text{$+1$  if $a > 0$;} \\
 \text{$-1$ if $a < 0$.}
 \end{cases} \]
\end{prop}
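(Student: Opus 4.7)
The plan is to reduce to the classical archimedean epsilon factor formula by scaling the additive character to its standard form. Let $\tilde{\psi}(z) := e^{2\pi i \, {\rm Tr}_{\CC/\R}(z)} = e^{2\pi i (z + \bar z)}$ be the standard additive character of $\CC$ built from the standard one of $\R$. A direct substitution shows that $\psi_0(z) = \tilde{\psi}(iz)$, since
$$\tilde{\psi}(iz) = e^{2\pi i (iz - i \bar z)} = e^{2\pi (\bar z - z)} = \psi_0(z).$$
By the usual transformation law for local epsilon factors under scaling of the additive character, applied with $c = i$ (for which $|i|_{\CC} = 1$), one obtains at $s = 1/2$
$$\epsilon(\alpha, \psi_0) = \alpha(i) \cdot \epsilon(\alpha, \tilde{\psi}).$$

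The first factor is read off from the polar form: writing $z = re^{i\theta}$ gives $\alpha(z) = (\bar z/z)^a = e^{-2 i a \theta}$, hence $\alpha(i) = e^{-i\pi a}$. The second factor is supplied by the classical formula of Tate for the archimedean local epsilon factor: for the character $\chi_n(z) = (z/|z|)^n$ with $n \in \Z$, one has $\epsilon(\chi_n, \tilde{\psi}) = i^{|n|}$. The conjugate-symplectic condition forces $2a \in \Z$, and our $\alpha$ is precisely $\chi_{-2a}$, so
$$\epsilon(\alpha, \tilde{\psi}) = i^{2|a|} = e^{i\pi |a|}.$$
Multiplying the two factors together,
$$\epsilon(\alpha, \psi_0) = e^{-i\pi a} \cdot e^{i\pi |a|} = e^{i\pi (|a| - a)}.$$
If $a > 0$, the exponent vanishes and the result is $+1$. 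If $a < 0$, then $|a| - a = -2a$ and the answer is $e^{-2\pi i a}$; since $-2a$ is a positive odd integer, this equals $-1$.

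The main obstacle is purely conventional: the precise sign in Tate's formula, the sign in the scaling rule, and the normalization of the Haar measure on $\CC^{\times}$ must be mutually consistent. Once a single convention is fixed (the natural choice being the Haar measure self-dual with respect to $\tilde{\psi}$), no further input is required. As a self-contained check, one can reprove Tate's formula in this specific case by directly computing the local zeta integral of $\alpha$ against the Schwartz function $f(z) = z^{2|a|} e^{-2\pi z \bar z}$ (for $a > 0$) and extracting $\epsilon$ from the ratio of Gamma factors produced by the archimedean functional equation.
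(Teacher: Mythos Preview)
Your proof is correct and follows the same route as the paper: relate $\psi_0$ to the standard additive character $\tilde\psi$ by $\psi_0(z)=\tilde\psi(iz)$, invoke the scaling law $\epsilon(\alpha,\psi_0)=\alpha(i)\,\epsilon(\alpha,\tilde\psi)$, and then appeal to Tate's archimedean formula. The only cosmetic difference is that the paper treats the case $a>0$ directly (quoting Tate as $\epsilon(\alpha,\tilde\psi)=i^{2a}$) and then handles $a<0$ by conjugating the chosen isomorphism $z:k\to\CC$, whereas you cover both signs at once via $\epsilon(\chi_n,\tilde\psi)=i^{|n|}$; the computations are otherwise identical.
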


\vskip 5pt 

\begin{proof}
Tate [T, (3.2.5)] gives the formula
$$ \epsilon(\alpha, \psi) = i^{2a}$$
when $a > 0$ and $\psi = e^{2 \pi i {\rm Tr}(z)}$. Since $\psi_0(z) = \psi(iz)$, we find
$$\epsilon(\alpha, \psi_0) = i^{2a}\cdot \alpha(i) = +1$$
in this case. When $a < 0$ we must conjugate the isomorphism $z:k \rightarrow \CC$ to use
Tate's formula. This changes the character $\psi_0$, and hence the sign of $\epsilon$.
\end{proof}
\vskip 5pt

\begin{cor}
Assume that $k_0 = \R$, choose an isomorphism $z: k \rightarrow \CC$, and let $\psi_0 = e^{2 \pi (z - \bar{z})}$. If $M$ is the sum of the distinct symplectic characters $\alpha_i = (\bar{z}/z)^{a_i}$, where each $a_i$ is a half integer, and $N$ is the sum of the distinct orthogonal characters $\beta_j = (\bar{z}/z)^{b_j}$, where each $b_j$ is an integer, then
\begin{eqnarray*}
\chi(e_i) & =& (-1)^{m_i} \\
\chi(f_j) & =& (-1)^{n_j} 
\end{eqnarray*}
where
\begin{eqnarray*}
 m_i &= & \# \{r: a_i+b_r < 0\}  \\
n_j &= &\# \{r: a_r + b_j < 0\}.
\end{eqnarray*}
\end{cor}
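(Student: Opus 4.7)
The plan is to derive the corollary as a direct consequence of Proposition~\ref{P:11.1} applied termwise to the product formulas
\[
\chi(e_i) = \prod_k \epsilon(\alpha_i\beta_k, \psi_0), \qquad \chi(f_j) = \prod_k \epsilon(\alpha_k\beta_j, \psi_0)
\]
that were established just before the proposition. The key observation is that each product $\alpha_i \beta_k$ takes the form $(\bar z/z)^{a_i + b_k}$ with exponent $a_i + b_k$ a half-integer (sum of a half-integer and an integer), hence never zero and in particular conjugate-symplectic. Thus Proposition~\ref{P:11.1} applies to every factor.

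First I would reconcile the two slightly different choices of $\psi_0$: Proposition~\ref{P:11.1} uses $\psi_0 = e^{2\pi(\bar z - z)}$ while the corollary uses $\psi_0 = e^{2\pi(z - \bar z)}$, i.e.\ $\psi_0^{\mathrm{cor}}(x) = \psi_0^{\mathrm{prop}}(-x)$. The standard transformation rule $\epsilon(\chi,\psi(c\,\cdot)) = \chi(c)\,\epsilon(\chi,\psi)$ (valid in our unramified-at-infinity setting where the $|c|$-factor is trivial) gives
\[
\epsilon(\alpha_i\beta_k,\psi_0^{\mathrm{cor}}) = (\alpha_i\beta_k)(-1)\cdot \epsilon(\alpha_i\beta_k,\psi_0^{\mathrm{prop}}),
\]
and $(\alpha_i\beta_k)(-1) = (\bar{-1}/-1)^{a_i+b_k} = 1$, so the two choices give identical local root numbers. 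One then invokes Proposition~\ref{P:11.1} with $a = a_i+b_k$ to conclude
\[
\epsilon(\alpha_i\beta_k,\psi_0) = \begin{cases} +1, & a_i+b_k > 0, \\ -1, & a_i+b_k < 0. \end{cases}
\]

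Substituting into the product formulas immediately yields
\[
\chi(e_i) = \prod_k \operatorname{sgn}(a_i+b_k) = (-1)^{\#\{r \,:\, a_i + b_r < 0\}} = (-1)^{m_i},
\]
and symmetrically $\chi(f_j) = (-1)^{n_j}$. There is essentially no obstacle here; the only place that requires care is the verification that the change of additive character between the proposition and the corollary has no effect, which hinges on the triviality of $\alpha_i\beta_k$ at $-1$. Once this is noted, the corollary follows by collecting signs.
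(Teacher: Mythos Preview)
Your proof is correct and follows precisely the approach the paper intends: the corollary is stated without a separate proof, as it is a direct consequence of applying Proposition~\ref{P:11.1} to each factor in the product formulas $\chi(e_i) = \prod_k \epsilon(\alpha_i\beta_k,\psi_0)$ and $\chi(f_j) = \prod_k \epsilon(\alpha_k\beta_j,\psi_0)$. You have also been more careful than the paper itself in reconciling the two sign conventions for $\psi_0$ (the proposition uses $e^{2\pi(\bar z - z)}$ while the corollary uses $e^{2\pi(z-\bar z)}$), correctly checking that $(\alpha_i\beta_k)(-1)=1$ so the discrepancy has no effect.
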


\vskip 10pt

Finally, we note that in the case when $k_0=\R$, we may order the distinct characters $\alpha_i$
and $\beta_j$ in the parameter $\varphi$ so that 
$$\begin{matrix} a_1 > a_2 > a_3 \cdots 
{\sf~~~~~~~~~~~~in~~~} \frac{1}{2}\Z -
\Z \\
b_1 > b_2 >b_3 \cdots {\sf~~~~~~~~~~~~in~~~} \Z.
\end{matrix}$$
\vskip 10pt

\begin{cor} For $i < j$, we have 
\begin{eqnarray*} 
\chi(e_i) \chi(e_j) & = & (-1)^{m_{ij}} \\
\chi(f_i) \chi(f_j) & = & (-1)^{n_{ij}}. 
\end{eqnarray*}
where
\begin{eqnarray*}
 m_{ij} &= & \# \{r: a_i+b_r > 0 > a_j+b_r\}  \\
n_{ij} &= &\# \{r: b_i+a_r > 0 > b_j+a_r\}.
\end{eqnarray*}
\end{cor}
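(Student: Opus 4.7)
The plan is to deduce this corollary directly from the previous one by examining the parities of $m_i + m_j$ and $n_i + n_j$ and exploiting the orderings on the $a$'s and $b$'s.

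First I would apply the previous corollary to write $\chi(e_i) \chi(e_j) = (-1)^{m_i + m_j}$, and then replace the exponent $m_i + m_j$ by $m_j - m_i$, which has the same parity. The key observation is that since $i < j$ implies $a_i > a_j$, we have $a_i + b_r > a_j + b_r$ for every $r$, so
$$\{r : a_i + b_r < 0\} \subseteq \{r : a_j + b_r < 0\}.$$
Hence $m_j - m_i$ equals the cardinality of the set of $r$ with $a_j + b_r < 0 \le a_i + b_r$. Because each $a_i$ lies in $\tfrac12 \Z - \Z$ and each $b_r$ lies in $\Z$, the sum $a_i + b_r$ is a half-integer and in particular is never $0$; thus the inequality $0 \le a_i + b_r$ can be strengthened to $0 < a_i + b_r$. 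This identifies $m_j - m_i$ with $m_{ij} = \#\{r : a_i + b_r > 0 > a_j + b_r\}$, which yields the first formula.

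The second formula is obtained by the same argument with the roles of the $a$'s and $b$'s interchanged: from $i < j$ we now get $b_i > b_j$, so $\{r : a_r + b_i < 0\} \subseteq \{r : a_r + b_j < 0\}$, and the difference $n_j - n_i$ counts exactly those $r$ with $a_r + b_j < 0 < a_r + b_i$. The non-vanishing of $a_r + b_i$ (half-integer plus integer) again lets us replace $\le$ by $<$, giving $n_i + n_j \equiv n_{ij} \pmod 2$.

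There is no real obstacle here; the only subtle point to check carefully is the half-integer/integer parity condition which guarantees that $a_i + b_r \neq 0$, allowing the set defined by a strict inequality to coincide with the one defined by the weak inequality arising from set subtraction. Once this is noted, the corollary follows in a couple of lines from the previous one.
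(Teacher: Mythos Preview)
Your argument is correct and is exactly the natural deduction from the preceding corollary; the paper itself states this corollary without proof, so there is nothing to compare. The one point worth recording is precisely the one you flag: the half-integer/integer condition on $a_i$ and $b_r$ ensures $a_i+b_r\neq 0$, so the weak inequality coming from the set difference can be replaced by the strict one in the definition of $m_{ij}$ (and similarly for $n_{ij}$).
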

\vskip 10pt

Since we know how to describe the representations in the $L$-packets of discrete series
parameters when $k_0 = \R$ [GR], the calculation of $\chi(e_i)\chi(e_j)$ and $\chi(f_i)\chi(f_j)$ allows us 
to say something about the representation $\pi = \pi(\varphi,\chi) = \pi_1 \otimes \pi_2 $ of $G(\R)$ with $d(\pi) = 1$. The irreducible representations $\pi_1$ and $\pi_2$ 
are discrete series representations of even and odd dimensional unitary groups, with
infinitesimal characters
\begin{eqnarray*}
&& a_1 > a_2 >  a_3 > \cdots \\
&& b_1 > b_2 > b_3 > \cdots 
\end{eqnarray*}
in $X^{\star} + \rho$ respectively. Moreover, in the chambers defined by their
Harish-Chandra parameters, the simple root walls corresponding to 

\begin{eqnarray*}
e_i-e_{i+1} {\rm~~is ~~compact~~} & \Longleftrightarrow & \chi(e_i)\cdot  \chi(e_{i+1})=-1 \\
f_i-f_{i+1} {\rm~~is ~~compact~~} & \Longleftrightarrow & \chi(f_i) \cdot \chi(f_{i+1})=-1.
\end{eqnarray*}
More generally, for $i < j$ the positive root  
\begin{eqnarray*}
e_i-e_{j} {\rm~~is ~~compact~~} & \Longleftrightarrow & \chi(e_i) \cdot \chi(e_{j})=(-1)^{i+j} \\
f_i-f_{j} {\rm~~is ~~compact~~} & \Longleftrightarrow & \chi(f_i) \cdot \chi(f_{j})=(-1)^{i+j}.
\end{eqnarray*}
This determines the signature of the unitary group $G(\R)$, and in almost all cases 
the discrete series representation $\pi$.

\vskip 10pt

We end this section with a remark about branching from $\U(n,1)$ to $\U(n)$.
According to a theorem of Harish-Chandra, an irreducible admissible
$(\mathfrak g,K)$-module is determined by the action of $U(\mathfrak g)^K$,
where $U(\mathfrak g)$ denotes the universal enveloping algebra of $G$, on a given $K$-type
which appears in the representation space. Further, the action of $K
\times U(\mathfrak g)$ on the corresponding isotypical component
is irreducible. By a theorem of Kostant, for 
\[  G=\U(n,1) \quad \text{and} \quad  K=\U(n)\times \U(1), 
\]
 $U(\mathfrak g)^K$ is generated by the centers of the universal enveloping
algebras of $G$ and $K$, and thus is, in particular, abelian. 
This proves that any irreducible representation of $\U(n)$ appears with multiplicity at most one in any
irreducible representation of $\U(n,1)$. 

 \vskip 15pt

\section{Depth zero supercuspidals}  \label{S:depth0}

In this section, we test the restriction conjecture for some 
tamely ramified discrete parameters $\varphi$ of unitary groups. We begin by calculating the 
local root numbers, assuming that $k_0$ is
non-archimedean and  $k$ is the unramified quadratic extension of $k_0$.
\vskip 5pt

\begin{prop} \label{P:2.4}
Assume that $k_0$ is non-archimedean, and let $k$ be the unramified quadratic field extension of $k_0$. Let $\psi_0$ be an additive character of $k$ which is trivial on both $k_0$ and the maximal ideal of the ring of integers $A_k$, but is nontrivial on $A_k$. Let $\alpha$ be a conjugate-symplectic character of $k^{\times}$ of conductor $f(\alpha)$. Then
$$\epsilon(\alpha, \psi_0) = (-1)^{f(\alpha) + 1}.$$
 \end{prop}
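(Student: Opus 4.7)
The plan is to evaluate $\epsilon(\alpha,\psi_0)$ directly from its Gauss-sum expression, exploiting in an essential way that $\alpha|_{A_{k_0}^\times} = 1$ (since $\omega_{k/k_0}$ is unramified on $k_0^\times$) together with $\psi_0|_{k_0} = 1$. When $f := f(\alpha) = 0$, the character $\alpha$ is unramified with $\alpha(\varpi_0) = \omega_{k/k_0}(\varpi_0) = -1$, and Tate's formula for unramified epsilon factors immediately gives $\epsilon(\alpha,\psi_0) = -1 = (-1)^{0+1}$.

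For $f \geq 1$, Tate's formula reads
\[
\epsilon(\alpha,\psi_0) \;=\; \alpha(\varpi_0)^{f-1}\cdot q^{-f/2}\cdot \tau(\alpha,\psi_0),\qquad \tau(\alpha,\psi_0) \;:=\; \sum_{u\in (A_k/\mathfrak{p}_k^f)^\times} \alpha^{-1}(u)\,\psi_0(u\varpi_0^{1-f}),
\]
so since $\alpha(\varpi_0)^{f-1} = (-1)^{f+1}$, it suffices to show $\tau(\alpha,\psi_0) = q_0^{f}$ with $q_0 := \sqrt q$. Choose a trace-zero unit $\tau_0 \in A_k$ so that $A_k = A_{k_0}[\tau_0]$, and decompose $u = u_0 + u_1\tau_0$. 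Because $\psi_0|_{k_0}=1$, the additive factor collapses to $\psi_0(u\varpi_0^{1-f}) = \lambda(u_1\varpi_0^{1-f})$, where $\lambda(z):= \psi_0(z\tau_0)$ is an additive character of $k_0$ of conductor $1$.

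Now group the sum into right cosets of $H := A_{k_0}^\times\cdot U_k^{(f)}/U_k^{(f)}$, on which $\alpha$ is trivial. The inner sum over each coset is a one-dimensional Gauss sum of the trivial multiplicative character over $(A_{k_0}/\mathfrak{p}_0^f)^\times$ against an additive character whose conductor equals $f - v_{k_0}(u_1)$; a standard evaluation shows this inner sum equals $|H| = q_0^{f-1}(q_0-1)$ on the identity coset, equals $-q_0^{f-1}$ on the $q_0-1$ ``boundary'' cosets where $v_{k_0}(u_1) = f-1$, and vanishes otherwise. For $f \geq 2$, these boundary cosets admit representatives $u = 1 + \sigma\varpi_0^{f-1}\tau_0$ with $\sigma \in \F_{q_0}^\times$, and $\alpha|_{U_k^{(f-1)}/U_k^{(f)}}$ corresponds to a nontrivial additive character of $\F_q$ that is trivial on $\F_{q_0}$ (hence nontrivial on the complementary line $\F_{q_0}\bar\tau_0$), so orthogonality on this line gives $\sum_\sigma \alpha^{-1}(u) = -1$. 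The case $f = 1$ is parallel, using multiplicative orthogonality on $\F_q^\times/\F_{q_0}^\times$. Assembling: $\tau(\alpha,\psi_0) = q_0^{f-1}(q_0 - 1) - q_0^{f-1}\cdot(-1) = q_0^f$, as required.

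The principal technical obstacle is the Gauss-sum bookkeeping --- in particular, correctly identifying the conductor of the induced $k_0$-character as $f - v_{k_0}(u_1)$ and combining the inner sign $-q_0^{f-1}$ with the outer orthogonality value $-1$ to land on the clean $q_0^f$. Conceptually, the compatibility of the conjugate-symplectic condition $\alpha|_{k_0^\times} = \omega_{k/k_0}$ (which forces $\alpha|_{A_{k_0}^\times} = 1$) with the condition $\psi_0|_{k_0} = 1$ collapses the full $f$-dimensional Gauss sum to a one-dimensional orthogonality calculation, and the nontrivial sign $(-1)^{f+1}$ comes directly from the prefactor $\alpha(\varpi_0)^{f-1}$ in Tate's formula.
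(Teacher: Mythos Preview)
Your proof is correct and self-contained, but it takes a genuinely different route from the paper's. The paper factors $\alpha = \beta\cdot\mu$ where $\beta = \alpha\mu^{-1}$ is conjugate-orthogonal and $\mu$ is the unramified quadratic character of $k^\times$, then invokes the Fr\"ohlich--Queyrut theorem [FQ] (via [GGP, \S 5]) to get $\epsilon(\beta,\psi_0)=+1$ for free, and finishes with Tate's unramified-twist formula $\epsilon(\beta\mu,\psi_0)=\epsilon(\beta,\psi_0)\cdot\mu(\pi^{\,f(\beta)+n(\psi_0)})=(-1)^{f(\alpha)-1}$. That argument is three lines but imports a nontrivial theorem. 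Your approach instead unpacks the Gauss sum directly, using only the two structural facts $\alpha|_{A_{k_0}^\times}=1$ and $\psi_0|_{k_0}=1$ to collapse the sum; this is more elementary and exhibits concretely \emph{why} the epsilon factor is a pure sign, at the cost of the coset bookkeeping. In effect you are reproving the relevant special case of Fr\"ohlich--Queyrut by hand.

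One small imprecision: your claim of ``$q_0-1$ boundary cosets'' is correct only for $f\ge 2$. When $f=1$ the quotient $\mathbb{F}_q^\times/\mathbb{F}_{q_0}^\times$ has $q_0+1$ elements, so there are $q_0$ non-identity cosets, all of them ``boundary.'' This does not affect your conclusion, since you correctly treat $f=1$ by multiplicative orthogonality (the nontrivial character $\alpha$ on $\mathbb{F}_q^\times/\mathbb{F}_{q_0}^\times$ still sums to $-1$ over the non-identity cosets), and the assembly $\tau = q_0^{f-1}(q_0-1) - q_0^{f-1}\cdot(-1) = q_0^f$ goes through uniformly. You might also note that the choice of a trace-zero unit $\tau_0\in A_k$ tacitly assumes residue characteristic $\ne 2$; this is harmless in the paper's setting but worth flagging.
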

\vskip 5pt

\begin{proof}
When $k/k_0$ is unramified, every conjugate-symplectic character $\alpha$ has the form
$$\alpha = \beta \cdot \mu,$$
where $\beta: k^{\times}/k_0^{\times} \rightarrow \CC^{\times}$ is a conjugate-orthogonal 
character and $\mu$ is the unramified quadratic character (which is conjugate-symplectic).
By [GGP, Section 5] and [FQ, Theorem 3], we have
\[ \epsilon(\beta,\psi_0) = +1 \]
for any character $\psi_0$ of $k$ which is trivial on $k_0$. Since $\mu$ is unramified, we have [T,(3.4.6)]
$$\epsilon(\alpha,\psi_0) = \epsilon(\beta,\psi_0) \cdot \mu(\pi^{f(\beta) + n(\psi_0)}).$$
Since $f(\beta) = f(\alpha)$ and $n(\psi_0) = -1$, this gives the formula in the proposition.
\end{proof}
\vskip 10pt

\begin{cor} \label{C:2.5}
 Assume that $k_0$ is non-archimedean, and let $k$ be the unramified quadratic field extension of $k_0$. Let $\psi_0$ be an additive character of $k$ which is trivial on both $k_0$ and the maximal ideal of the ring of integers $A_k$, but is nontrivial on $A_k$. Let 
 \[  M =\oplus_i \alpha_i \quad \text{and} \quad N = \oplus_j \beta_j \]
 where the$\alpha_i$'s are mutually distinct,  tamely ramified, conjugate-symplectic characters, and the $\beta_j$'s are mutually distinct, tamely ramified, conjugate-orthogonal characters. Order these characters so that
$$\alpha_1\beta_1= \alpha_2 \beta_2= \cdots =  \alpha_p \beta_p = \mu,$$
for $p \geq 0$ and no other products $\alpha_i \beta_j = \mu$. Then
\[ \chi(e_i) = \begin{cases}
\text{$-1$ when $i \leq p$;} \\
\text{$+1$ when $i > p$.}
\end{cases} \] 
Similarly,
\[ \chi(f_j) = \begin{cases}
\text{$-1$ when $j \leq p$;} \\
\text{$+1$ when $j > p$.} 
\end{cases} \]
Finally, $\chi(-1,1) = \chi(1,-1) = (-1)^p.$
\end{cor}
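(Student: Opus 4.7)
The plan is to assemble the statement from Proposition \ref{P:2.4} and the two general formulae
\[
\chi(e_i)=\prod_k \epsilon(\alpha_i\beta_k,\psi_0),\qquad
\chi(f_j)=\prod_k \epsilon(\alpha_k\beta_j,\psi_0)
\]
derived in the previous section, after a short analysis of when each individual factor is $\pm 1$.

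First I would apply Proposition \ref{P:2.4} to each product $\alpha_i\beta_k$, which is conjugate-symplectic since it is the product of a conjugate-symplectic and a conjugate-orthogonal character. This gives
\[
\epsilon(\alpha_i\beta_k,\psi_0)=(-1)^{f(\alpha_i\beta_k)+1}.
\]
Because every $\alpha_i$ and $\beta_k$ is tamely ramified, each is trivial on $1+\mathfrak p_k$, so $\alpha_i\beta_k$ is trivial there as well, forcing $f(\alpha_i\beta_k)\in\{0,1\}$. The next step is to identify exactly when the conductor is $0$: $\alpha_i\beta_k$ is then an unramified conjugate-symplectic character, and since $k/k_0$ is unramified one checks that the only such character is $\mu$ (on a uniformizer of $k_0$, which is also a uniformizer of $k$, an unramified character takes values $\pm 1$ by the conjugate-symplectic condition, and $\omega_{k/k_0}$ takes the value $-1$ there). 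Hence
\[
\epsilon(\alpha_i\beta_k,\psi_0)=\begin{cases} -1 & \text{if } \alpha_i\beta_k=\mu,\\ +1 & \text{otherwise.}\end{cases}
\]

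Next I would invoke the ordering hypothesis: for a fixed $i$, the equation $\alpha_i\beta_k=\mu$ has at most one solution in $k$ (since the $\beta_k$ are distinct), and by assumption this solution exists precisely when $i\leq p$, in which case it is $k=i$. Thus the product defining $\chi(e_i)$ contains exactly one factor of $-1$ when $i\leq p$ and no factors of $-1$ when $i>p$, giving $\chi(e_i)=-1$ and $+1$ respectively. The identical argument, with the roles of $\alpha$ and $\beta$ reversed, yields the stated formula for $\chi(f_j)$.

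Finally, for $\chi(-1,1)$ and $\chi(1,-1)$ I would note that since every $M_i$ and $N_j$ is one-dimensional, the scalar element $-1$ in the centralizer $C_M$ (respectively $C_N$) is represented by $\prod_i e_i$ (respectively $\prod_j f_j$) in the component group. The character $\chi$ therefore evaluates as
\[
\chi(-1,1)=\prod_i\chi(e_i)=(-1)^p,\qquad \chi(1,-1)=\prod_j\chi(f_j)=(-1)^p,
\]
where in each case exactly $p$ of the factors are equal to $-1$. Apart from the brief verification that $\mu$ is the unique unramified conjugate-symplectic character, every step is a direct bookkeeping consequence of Proposition \ref{P:2.4}, so I do not anticipate a genuine obstacle.
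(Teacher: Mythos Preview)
Your proof is correct and follows essentially the same line as the paper's own argument: compute conductors of the products $\alpha_i\beta_j$ using tame ramification, observe that conductor $0$ occurs precisely when the product equals $\mu$, and then read off the values of $\chi$ from Proposition~\ref{P:2.4}. You supply a bit more detail than the paper (the uniqueness of $\mu$ as an unramified conjugate-symplectic character, and the identification of $-1\in C_M$ with $\prod_i e_i$), but the approach is identical.
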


\vskip 5pt

\begin{proof}
Since our characters are all tamely ramified, we find
$$f(\alpha_i\beta_j) = 1,$$
unless $i = j \leq p$, in which case the product is equal to the unramified
character $\mu$ and $f(\alpha_i\beta_i) = 0$. Taking the product of epsilon
factors giving $\chi$ gives the desired result.
\end{proof}

\vspace{5mm}

We take the parameter 
\begin{eqnarray*}
M & = & \bigoplus M(\alpha_i) \\
N & = & \bigoplus N(\beta_j)
\end{eqnarray*}
given by the sum of distinct conjugate-symplectic and distinct conjugate-orthogonal
characters of $k^{\times}$. We will assume that all of these characters
are tamely ramified:
$$f(\alpha_i) = f(\beta_j) = 1.$$
\vskip 10pt

The $L$-packet $\Pi_{\varphi}$ of depth zero supercuspidal representations of the
pure inner forms $G = \U(W) \times \U(W_0)$ has been constructed by DeBacker and
Reeder [DR], and we briefly summarize their results in this case. Let $V$ be a hermitian space
of dimension $n$ over $k$. A parameter $\varphi$ of the above type for the unitary group
$\U(V) = \U_n$ gives, by restriction to the units of $k^{\times}$, a regular complex character $\rho$ of
the anisotropic torus $S = (\U_1)^n$. Each embedding
$$ f : S \rightarrow \U(V)$$
corresponds to a decomposition of the space $V$ into the direct sum of orthogonal lines, stable
under the action of $S$. Write this decomposition, choosing a basis for each line, as
$$ V = \bigoplus k v_i.$$
The $\U(V)$-conjugacy class of the embedding $f$ depends only on the signs
$$\epsilon_i = (-1)^{\ord \langle v_i,v_i \rangle},$$
which must satisfy the one relation
$$\prod_i \epsilon_i = (-1)^{\ord(\disc V)}.$$
Since the two hermitian spaces $V$ and $V'$ of dimension $n$ have distinct
hermitian discriminants, there are exactly $2^n$ conjugacy classes of embeddings
$f$ of $S$ into $\U(V)$ and $\U(V')$. These conjugacy classes correspond bijectively 
to the characters $\chi = \chi_f$ of the group $A_\varphi$, where $\chi(e_i) = \epsilon_i$.
\vskip 10pt

For each embedding $f: S \rightarrow \U(V)$, there is a 
unique maximal compact subgroup $K_f \subset \U(V)$ which contains
the image. This is the subgroup stabilizing the lattice,
$$L_f = \bigoplus A_k v_i,$$
where we normalize the basis vectors of our $S$-stable lines
to satisfy
$ 0 \leq {\rm ord} \langle v_i, v_i \rangle \leq 1$.
The reduction mod $\pi$
of $K_f$ has reductive quotient
$$\bar{K}_f \cong \U_p \times \U_{n-p}, $$
where $p$ is the number of $v_i$ with $(-1)^{{\rm ord} 
\langle v_i, v_i \rangle} = -1.$ Hence $K_f$ will be hyperspecial 
if and only if all of the inner products $ \langle v_i, v_i \rangle$ 
have valuations of the same parity. 
\vskip 10pt

The torus $S(q)$ embeds in $\bar{K}_f(q)$, and the regular tame character $\rho$
of $S(q)$ allows us to construct 
an irreducible, supercuspidal representation $R_f(S,\rho)$
of the finite group $\bar{K}_f(q)$, using the method of
Deligne and Lusztig. We view this as a representation of the
compact group $K_f$, and define the representation 
\[  \pi_\chi = \pi_f, \quad \text{of $\U(V)$} \]
as the compact induction of $R_f(S,\rho)$. These are the $2^n$
depth zero supercuspidal representation in the $L$-packet $\Pi_{\varphi}$.

\vskip 10pt

The Vogan bijection
between the set $\Pi_{\varphi}$ and the group of homomorphisms from $A_{\varphi}$
to $\langle\pm1\rangle$ is normalized as follows.
Assume that the hermitian space $V$ is split. The character $\chi = 1$ of
$A_{\varphi}$ gives rise to the non-degenerate hermitian $A$-lattice $L$ in $V$, with an orthogonal
basis whose inner products are units in $A$. Let $N_L$ be the unipotent radical of an Iwahori
subgroup of the hyperspecial maximal subgroup $K = \Aut(L)$ in $\U(V)$. The construction
of [GGP, \S 12] over the ring $A$ gives a surjective homomorphism
$$ f + f_0: N_L \rightarrow A^{n-1} + A(-)$$
where $A(-)$ is the eigenspace where $\sigma = -1$ on $A$, which consists of the
elements of trace $0$ to $A_0$.
\vskip 10pt

By [DR2], the character $\chi = 1$ of $A_{\varphi}$ corresponds to the unique representation
$\pi_1$ of in the $L$-packet of $\varphi$ which is induced from a generic, cuspidal
representation of the reductive quotient $\U_n(q)$ of $\U(L)$. All of the generic characters of
the unipotent radical $N(q)$ of a Borel subgroup of $\U_n(q)$ are conjugate, and we construct
one of them in the following manner.
\vskip 10pt

Let $\psi_0$ be an additive character of $k$ which is trivial on $k_0$ and the maximal
ideal $P$ of $A$, but is nontrivial on $A$. Since $A$ is unramified over $A_0$, we have
$$ A(+) + A(-) = A_0 + 2\cdot A.$$
Hence, for elements $z$ in $A(-)$, the character 
\[  z \mapsto \psi_0(z/2) \]
is nontrivial on $A(-)/P(-)$.
Let $\psi$ be any additive character that is trivial on $P$ but nontrivial on $A$. Then the
composition
$$\psi(\Sigma f(n)) \cdot \psi_0(f_0(n)/2): N_L \rightarrow \Sc^1$$
gives a generic character of $N_L$ whose reduction $\bmod \,P$ is a generic character of $N(q)$.
Hence the representation $\pi_1$ corresponding to the trivial character of $A_{\varphi}$ is
generic for the character obtained by scaling the additive character $\psi_0$  used to define
the root number character in Proposition \ref{P:2.4} and Corollary \ref{C:2.5} by the factor $1/2$, or equivalently by the factor $2$. This is the normalization predicted in [GGP,Conjecture 16.3].

\vskip 10pt

Now consider the parameter of $G = \U(W) \times \U(W_0) = \U_n \times \U_m$ which is given by
\begin{eqnarray*}
M & = & \bigoplus M(\alpha_i) \\
N & = & \bigoplus N(\beta_j)
\end{eqnarray*}
From the calculation of the character $\chi = \chi_N \times \chi_M$ of $A_\varphi$ in 
the previous section, we conclude that the irreducible representation $\pi_\chi$ of
$G = \U(W) \times \U(W_0)$ is compactly induced from a maximal compact subgroup
with reduction isomorphic to 
$$(\U_p \times \U_{n - p}) \times (\U_p \times \U_{m-p})$$
over $\F_q$. Here $p \geq 0$ is the number of pairs $(\alpha_i,\beta_i)$
with $\alpha_i\beta_i = \mu$. The finite dimensional representation that we
are inducing has the form
$$(R \otimes R(\alpha)) \otimes (R^{\vee} \otimes R(\beta))$$
where $R$ is the Deligne-Lusztig representation of $\U_p(q)$ associated
to the character $(\alpha_1,\alpha_2,...,\alpha_p)$ and $R^{\vee}$ is its dual
representation, associated to the character $(\beta_1,\beta_2,...,\beta_p)$.
The remaining representations $R(\alpha)$ of $\U_{n-p}(q)$ and $R(\beta)$
of $\U_{m-p}(q)$ are associated to characters whose components $\alpha_i$
and $\beta_j$ satisfy $\alpha_i\beta_j \neq \mu$ for all $i,j$.
\vskip 10pt

As support for [GGP, Conjecture 16.3], we will prove:
\vskip 5pt

\begin{thm} \label{T:depth-zero-sc}
Let $\pi_{\chi}$ be the depth zero supercuspidal representation of $G = \U(W) \times \U(W_0)$ defined above, which corresponds to the distinguished character in [GGP, conjecture 16.3]. Then
$\pi_{\chi}$ possesses a Bessel model, in the sense that
\[  \dim \Hom_H (\pi_{\chi},\nu) =1 \]
where $(H,\nu)$ is as defined in [GGP, \S 12].
\end{thm}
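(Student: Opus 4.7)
The strategy is to reduce the Bessel model statement over $k_0$ to a multiplicity-one statement for a pair of Deligne-Lusztig representations over the residue field $\F_q$, and then to exploit the structure of the inducing datum, in particular the duality between $R$ and $R^\vee$.

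By construction $\pi_\chi = \text{c-}\Ind_K^G(\tau)$, where $K \subset G$ is the maximal compact subgroup with reductive quotient $\bar K = (\U_p \times \U_{n-p}) \times (\U_p \times \U_{m-p})$ over $\F_q$, and $\tau$ is the inflation to $K$ of the representation $(R \otimes R(\alpha)) \boxtimes (R^\vee \otimes R(\beta))$ described in the preceding discussion.  By Mackey theory,
$$\Hom_H(\pi_\chi,\nu) \;=\; \bigoplus_{g \in H \backslash G / K} \Hom_{K \cap g^{-1}Hg}(\tau,\,\nu^g).$$
The first step is to show that only one distinguished double coset contributes.  This combines two facts: $\pi_\chi$ is depth-zero supercuspidal, so $\tau$ is trivial on the pro-$p$ radical of $K$; and the Bessel datum $(H,\nu)$ of [GGP, \S 12] is itself depth zero in the relevant sense, so vanishing of $\Hom$ on the remaining double cosets follows from a standard matching of parahoric data via the Moy-Prasad filtration.

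For the distinguished double coset, the $\Hom$ space reduces to $\Hom_{\bar H(q)}(\bar\tau,\bar\nu)$, where $\bar H(q) \subset \bar K(q)$ is the image of $H \cap K$ in the reductive quotient — a finite Bessel subgroup — and $\bar\nu$ is the reduction of $\nu$.  The normalization of $\psi_0$ discussed earlier in this section is designed precisely so that $\bar\nu$ is the expected generic character on $\bar H(q)$.  The subgroup $\bar H(q)$ respects the decomposition of $\bar K(q)$ into a ``paired'' block $(\U_p)^2$ and an ``unpaired'' block $\U_{n-p} \times \U_{m-p}$: on the paired block it contains the diagonal $\U_p(q)$ identified via $\alpha_i\beta_i = \mu$, and the restriction of $R \otimes R^\vee$ to this diagonal contains the trivial representation with multiplicity one by Schur's lemma.

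The problem thus reduces to the finite-field Gan-Gross-Prasad multiplicity-one statement for the pair $(R(\alpha),R(\beta))$ of regular Deligne-Lusztig representations on $\U_{n-p}(q) \times \U_{m-p}(q)$, whose parameters satisfy $\alpha_i\beta_j \neq \mu$ for all $i,j$.  This is the main obstacle: it requires showing — via direct character computation, induction on $(n,m)$, or appeal to results on generic Deligne-Lusztig representations of finite unitary groups — that the Bessel restriction of this pair admits a one-dimensional space of Bessel functionals.  The remaining steps (the Mackey vanishing and the factorization through duality) are essentially formal once the finite-field statement has been set up explicitly in this setting.
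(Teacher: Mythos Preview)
Your reduction outline is broadly correct --- pass to a finite-field Bessel problem and split off the paired $\U_p \times \U_p$ block via the duality $R \otimes R^\vee$ --- but the two substantive steps you flag are exactly where the argument is incomplete.

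First, the Mackey vanishing on non-distinguished double cosets is asserted without proof, and the paper avoids this altogether. The upper bound $\dim \Hom_H(\pi_\chi,\nu) \le 1$ comes directly from the multiplicity-one theorem of [AGRS] at the $p$-adic level. Only the lower bound requires the finite-field input, and for that a single Frobenius-reciprocity inequality (as in the proof of Proposition~\ref{P:AGRS-finite}) suffices: one exhibits a nonzero contribution rather than proving all other cosets vanish.

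Second --- and this is the real gap --- the finite-field Bessel multiplicity for $(R(\alpha),R(\beta))$ is indeed the heart of the matter, and none of the approaches you list (direct character computation, induction, appeal to genericity) is what actually works. The paper's argument runs as follows:
\begin{enumerate}
\item[(i)] Lift back to the $p$-adic setting to import [AGRS] and deduce multiplicity $\le 1$ for the finite-field Bessel (Propositions~\ref{P:AGRS-finite} and~\ref{P:AGRS-finite2}).
\item[(ii)] Compute the analogous $\GL_{n-1}(\F_{q^2})$-branching multiplicity for the quadratic base changes, using Bernstein--Zelevinsky derivatives (Theorem~\ref{T:GL-finite}); this multiplicity is odd precisely when the cuspidal supports of the two base-changed representations are disjoint.
\item[(iii)] Use Shintani descent to show that the unitary and general-linear branching multiplicities agree modulo~$2$.
\end{enumerate}
Since the unitary multiplicity is already known to be $0$ or $1$, its parity determines it exactly. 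The disjoint-support condition in~(ii) holds precisely because $\alpha_i\beta_j \ne \mu$ for all $i,j$ on the unpaired block. This Shintani-descent-plus-parity trick is the key idea your proposal is missing.
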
 
\vskip 10pt

To prove the existence of a (unique) Bessel model for $\pi_\chi$, it is sufficient to
establish the existence of a Bessel model for the representation
$$R(\alpha) \otimes R(\beta) \quad \text{of $\U_{n-p} \times \U_{m-p}$}, $$ 
as there is clearly a unique $\U_p \times \U_p$
invariant linear form on $(R \otimes R^{\vee})$. We will do this in the following two sections,
after first studying the situation for general linear groups.

\vskip 15pt

\section{Branching laws for $\GL_n({\Bbb F}_q)$}
In this section, we calculate the restriction of a representation
of $\GL_n({\Bbb F}_q)$ to $\GL_{n-1}({\Bbb F}_q)$ where 
$\GL_{n-1}({\Bbb F}_q)$ sits inside $\GL_n({\Bbb F}_q)$ in the natural way as
$$A \mapsto \left(\begin{array}{cc} A & 0 \\  0 & 1 \end{array} 
\right ).$$
These branching laws are surely known in the literature, such as
in the work of Thoma [Th]; however, we have preferred to give a different
independent treatment.

\vskip 10pt

We begin by recalling the notion of twisted Jacquet functor. Let $P= M\cdot N$ be any
group such that $N$ is  a normal subgroup of $P$ and let $\varphi$ 
be a character of $N$ whose stabilizer in $M$ is denoted by $M_{\varphi}$. 
The data $(N, \varphi)$ defines the twisted Jacquet functor from the category of smooth representations of $P$ to the category of smooth representations of $M_{\varphi}$.
It associates to a 
representation $V$ of $P$  the largest quotient $V_{N, \varphi}$ of $V$ on which $N$ operates via
the character $\varphi$; clearly $V_{N,\varphi}$ is a representation space for 
$ M_\varphi$. The twisted Jacquet functor is exact. 
 \vskip 10pt
 
 Now let $E_{n-1}$ be the mirabolic subgroup of $\GL_n({\Bbb F}_q)$ 
consisting of matrices  whose last row is equal to $(0, 0,\cdots, 0, 1)$ and
let $N_n$ be the group of upper triangular unipotent matrices in $\GL_n( {\Bbb F}_q)$.
We fix a nontrivial character $\psi_0$ of $\mathbb{F}_q$ and let 
$\psi_n$ be the character of $N_n$, given by
\[ \psi_n(u)=\psi_0(u_{1,2}+u_{2,3}+\cdots + u_{n-1,n}).\] 
For a representation $\pi$ of $\GL_n({\Bbb F}_q)$, let  
\[  \pi^i = \text{ the {\it i}-th derivative of $\pi$}, \]
which is a representation of $\GL_{n- i}( {\Bbb F}_q)$. To recall the definition of $\pi^i$,  if 
\[  R_{n-i} = \GL_{n- i}({\Bbb F}_q) \cdot V_i \]
 is the subgroup of
$\GL_n({\Bbb F}_q)$ consisting of matrices 
$$\left(\begin{array}{cc} g & v \\  0 & z \end{array} \right)$$ 
with $g\in \GL_{n-i}( {\Bbb F}_q)$, $v\in M(n-i,i),z\in N_i$, and if the
character $\psi_i$ of $N_i$ is extended to $V_i$ by extending it trivially across 
$M(n-i, i)$,
then we have 
\[  \pi^i= \pi_{V_i, \psi_i} .\]
\vskip 10pt

If $\pi$ is
an irreducible  cuspidal representation of $\GL_n({\Bbb F}_q)$, 
then $\pi^i = \pi$ for $i=0$, and $\pi^n= 1$, the trivial representation
of the trivial group $\GL_0({\Bbb F}_q)$. All the other
derivatives of $\pi$ are 0.
\vskip 10pt

The following proposition is from Bernstein-Zelevinsky [BZ], where it was established for
non-archimedean local fields, but their proof works for finite fields as well.
It is known as the Leibnitz rule for derivatives.
\vskip 5pt

\begin{prop}  \label{P:BZ}
For $\pi_1$ a representation of $\GL_{n_1}({\Bbb F}_q)$  and $\pi_2$ of 
$\GL_{n_2}({\Bbb F_q})$, we let $\pi_1 \times \pi_2$
denote the representation of $GL_{n_1 + n_2}({\Bbb F}_q)$
 induced from the corresponding representation
of the parabolic subgroup with Levi subgroup 
$GL_{n_1}({\Bbb F}_q) \times  GL_{n_2}({\Bbb F}_q)$. 
Then there is a composition series of the $k$-th derivative 
$(\pi_1 \times \pi_2)^k$  whose
successive quotients are $ \pi_1^i \times \pi_2^{k-i}$ for $i=0,\cdots, k$.
\end{prop}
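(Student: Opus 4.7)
The plan is to prove the Leibnitz rule following the geometric lemma approach of Bernstein and Zelevinsky, adapted from their $p$-adic setting to finite fields. Let $G = \GL_n(\mathbb{F}_q)$ with $n = n_1 + n_2$, let $P = M \cdot U$ be the standard parabolic with Levi $M = \GL_{n_1}(\mathbb{F}_q) \times \GL_{n_2}(\mathbb{F}_q)$, and set $\sigma = \pi_1 \boxtimes \pi_2$, so that $\pi_1 \times \pi_2 = \Ind_P^G(\sigma)$. By definition, $(\pi_1 \times \pi_2)^k$ is the twisted Jacquet module of $\Ind_P^G(\sigma)$ with respect to $(V_k, \psi_k)$, so the task is to analyze this Jacquet module.

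The first step is to filter $\Ind_P^G(\sigma)$ by $R_{n-k}$-subrepresentations indexed by the double cosets $P \backslash G / R_{n-k}$. These double cosets correspond to $R_{n-k}$-orbits on the Grassmannian $G/P = \mathrm{Gr}(n_1, n)$. Because $R_{n-k}$ contains $\GL(V_{n-k})$ together with a unipotent group that refines the flag $V_{n-k} \subset V_{n-k+1} \subset \cdots \subset V_n$, the orbits are naturally indexed by an integer $i$ measuring how the $n_1$-plane $W$ meets $V_{n-k}$; the range of $i$ will include $0, 1, \ldots, k$ (and possibly other values which are destined to die after Jacquet). Ordering the orbits in a closure-compatible way yields a filtration of $\Ind_P^G(\sigma)$ whose associated graded pieces, by Mackey, are of the form $\Ind_{R_{n-k} \cap w_i^{-1} P w_i}^{R_{n-k}}(w_i \cdot \sigma)$ for a suitable choice of double coset representatives $w_i$.

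Next, I would apply the (exact) twisted Jacquet functor with respect to $(V_k, \psi_k)$ to each successive quotient. The representative $w_i$ can be arranged so that $V_k \cap w_i^{-1} M w_i$ identifies with $V_i \times V_{k-i}$ sitting diagonally across the two factors of $M$, while $V_k \cap w_i^{-1} U w_i$ becomes an additional unipotent subgroup on which $\psi_k$ restricts trivially (consistent with the fact that $\psi_k$ is by construction trivial on $M(n-k,k)$). Orbits for which this compatibility fails — that is, for which $\psi_k$ restricts nontrivially to some subgroup on which it is forced to be trivial — contribute zero after Jacquet. For the $k+1$ surviving orbits indexed by $i = 0, \ldots, k$, the $(V_k, \psi_k)$-coinvariants of the induced piece become precisely $\pi_1^i \times \pi_2^{k-i}$, where each derivative is computed inside its own $\GL_{n_j}$ factor and the outer product denotes parabolic induction inside $\GL_{n-k}(\mathbb{F}_q)$ from the Levi $\GL_{n_1 - i} \times \GL_{n_2 - (k-i)}$.

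The main obstacle is the combinatorial bookkeeping: identifying the intersections $V_k \cap w_i^{-1} P w_i$ explicitly, and verifying that the character $\psi_k$ of $V_k$ restricts compatibly to $\psi_i \boxtimes \psi_{k-i}$ on the Levi components while being trivial on the remaining unipotent contributions. This matching is the genuine content of the BZ argument, but since every step is purely algebraic/set-theoretic, the proof transfers essentially unchanged from the $p$-adic to the finite field setting; the only simplification is that over a finite group Mackey's theorem yields an honest direct-sum decomposition, which we then reinterpret as a filtration in order to record the statement in the form of a composition series.
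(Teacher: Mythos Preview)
The paper does not supply a proof of this proposition at all: it simply attributes the result to Bernstein--Zelevinsky [BZ] in the $p$-adic setting and remarks that ``their proof works for finite fields as well.'' Your proposal is precisely a sketch of that Bernstein--Zelevinsky argument (Mackey decomposition of $\Ind_P^G\sigma$ with respect to $R_{n-k}$, followed by the twisted Jacquet functor on each piece), so you are doing exactly what the paper asserts can be done, and your outline is correct. Your closing observation---that over a finite field Mackey gives an honest direct sum rather than merely a filtration---is the one genuine simplification relative to the $p$-adic case.
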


\vskip 5pt

Here is a generality from Bernstein and Zelevinsky [BZ].

\begin{prop}  \label{P:leibnitz}
 Any  representation $\Sigma$ of $E_{n-1}$ has a natural filtration of $E=E_{n-1}$
modules 
\[  0 \subset \Sigma_0 \subset  \Sigma_1 \subset  \Sigma_2 \subset \cdots \subset
\Sigma_n \]
such that
\[  \Sigma_{i+1}/\Sigma_i = {\rm ind}_{R_{i}}^E(\Sigma^{n-i} \otimes \psi_{n-i})  \quad \text{for $i=0,\cdots, n-1$,} \] 
where $R_{i} = \GL_i({\Bbb F}_q)\cdot V_{n-i}$ is the subgroup of $\GL_n({\Bbb F}_q)$ consisting 
 of 
$$\left(\begin{array}{cc} g & v \\  0 & z \end{array} \right)$$ 
with $g\in \GL_i({\Bbb F}_q)$, $v\in M(i,n-i),z\in N_{n-i}$, and the
character $\psi_{n-i}$ on $N_{n-i}$ is extended to $V_{n-i}$ by extending 
it trivially across 
$M(i,n- i)$.
\end{prop}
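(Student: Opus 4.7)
The proposition is a classical Bernstein--Zelevinsky filtration, and I would establish it by induction on $n$ by decomposing $\Sigma$ with respect to an abelian normal subgroup of $E_{n-1}$ and iterating. The base case $n=1$ is trivial: $E_0$ is the trivial group, the filtration has a single step $\Sigma_1=\Sigma$, and this coincides with the Whittaker piece on the right-hand side.

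For the inductive step, identify the normal abelian subgroup $V_1 \subset E=E_{n-1}$ consisting of matrices agreeing with the identity except in the last column above the bottom right $1$. Then $V_1 \cong \F_q^{n-1}$ and $E/V_1 \cong \GL_{n-1}(\F_q)$. Since $V_1$ is finite abelian and we work in characteristic zero, decomposing $\Sigma|_{V_1}$ into isotypic components yields a canonical short exact sequence of $E$-modules
\[
0 \longrightarrow \Sigma_{n-1} \longrightarrow \Sigma \longrightarrow \Sigma_{V_1} \longrightarrow 0,
\]
where $\Sigma_{n-1}$ is the sum of the nontrivial isotypic components and $\Sigma_{V_1}$ is the space of coinvariants. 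Because $N_1$ is trivial, the character $\psi_1$ on $V_1$ is trivial, so $\Sigma_{V_1}=\Sigma^1$ as a $\GL_{n-1}$-module, and the quotient $\Sigma_n/\Sigma_{n-1}$ matches the formula for $i=n-1$, since $R_{n-1}=E$.

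For the subspace $\Sigma_{n-1}$, observe that $\GL_{n-1}$ acts transitively on the nontrivial characters of $V_1$, and the stabilizer of a chosen nontrivial character $\psi$ is a mirabolic subgroup identified with $E_{n-2}\subset\GL_{n-1}$. Clifford--Mackey theory then produces
\[
\Sigma_{n-1} \;\cong\; \mathrm{ind}_{E_{n-2}\cdot V_1}^{E}\bigl(\Sigma_{V_1,\psi}\bigr),
\]
where $\Sigma_{V_1,\psi}$ is the $\psi$-isotypic component viewed as a representation of $E_{n-2}\cdot V_1$. Regarding $\Sigma_{V_1,\psi}$ as a representation of the smaller mirabolic $E_{n-2}$, the induction hypothesis furnishes a filtration $0=\Sigma'_0\subset\Sigma'_1\subset\cdots\subset\Sigma'_{n-1}=\Sigma_{V_1,\psi}$ whose successive quotients have the advertised form. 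Inducing this filtration from $E_{n-2}\cdot V_1$ up to $E$ (using exactness of induction for finite groups, and transitivity of induction) produces the desired $\Sigma_0\subset\Sigma_1\subset\cdots\subset\Sigma_{n-1}$.

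The one step requiring care---the main obstacle---is the bookkeeping that matches the inductive output to the stated formula. Concretely, one must verify that the $k$-th derivative of $\Sigma_{V_1,\psi}$ (computed inside $E_{n-2}$) agrees with the $(k+1)$-th derivative $\Sigma^{k+1}$ of the original $\Sigma$, and that the analogous subgroups $R'_k\subset E_{n-2}$, once enlarged by $V_1$, yield exactly the subgroups $R_k\subset E$ appearing in the statement. Both identifications follow from the observation that $V_{n-i}\subset E$ is generated by $V_1$ together with the corresponding $V_{n-1-i}\subset E_{n-2}$, and that the character $\psi_{n-i}$ on $V_{n-i}$ restricts to $\psi$ on $V_1$ and to $\psi_{n-1-i}$ on the $E_{n-2}$-piece. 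Once this compatibility is spelled out, transitivity of induction, together with the identity $\mathrm{ind}_{R_i}^{E}=\mathrm{ind}_{E_{n-2}\cdot V_1}^{E}\circ\mathrm{ind}_{R'_{i}\cdot V_1}^{E_{n-2}\cdot V_1}$, finishes the proof.
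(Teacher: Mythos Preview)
Your proof is correct and follows the standard Bernstein--Zelevinsky argument: decompose with respect to the abelian normal subgroup $V_1$, identify the coinvariants as the top quotient, apply Clifford--Mackey to the nontrivial isotypic piece, and then induct. The only point worth tightening is that the ``chosen nontrivial character $\psi$'' of $V_1$ must be the specific one $v\mapsto\psi_0(v_{n-1,n})$ so that the compatibility $\psi_{n-i}|_{V_1}=\psi$ actually holds; you implicitly use this but do not say it. Since $\GL_{n-1}$ acts transitively on nontrivial characters this is harmless, but making the choice explicit removes any ambiguity in the bookkeeping step.

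As for comparison with the paper: the paper gives no proof at all. The proposition is simply quoted as ``a generality from Bernstein and Zelevinsky [BZ]'' and left at that. Your argument is precisely the one in [BZ] (adapted to finite fields, where complete reducibility makes the filtration split and the argument slightly cleaner than in the $p$-adic case), so you have supplied what the paper omits.
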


\vspace{2mm}

As a consequence of the above two propositions, we have the following corollary.
\vskip 5pt

\begin{cor}
Let $n = n_1 + \cdots + n_r$ be a sum of positive integers, and 
let $\pi_i$ be an irreducible cuspidal representation of $\GL_{n_i}({\Bbb F}_q)$ for $i = 1, \cdots , r$.
Let 
\[  \Pi= \pi_1 \times \cdots \times \pi_r \] 
be the corresponding 
parabolically induced
representation of $\GL_n({\Bbb F}_q)$. Then the restriction of $
\pi_1 \times \cdots \times \pi_r$ to $\GL_{n-1}({\Bbb F}_q)$ 
is a sum of the following representations:

$$\pi_{i_1} \times \pi_{i_2}\times \cdots \times \pi_{i_s} \times 
\Sigma[n-1-(i_1+\cdots + i_s)]$$
where $1 \leq i_1< i_2 < \cdots < i_s \leq r$ (the empty sequence is allowed) and 
$i_1+\cdots + i_s < n$. Moreover,
\[   \Sigma[m] = {\rm ind}_{N_m} ^{\GL_m(\mathbb{F}_q)} \psi_m  \]
denotes  the Gelfand-Graev representation of $\GL_m({\Bbb F}_q)$, with 
$\Sigma[1]$  equal to the regular representation of 
${\Bbb F}_q^\times$ and  $\Sigma[0]$ denoting the trivial
representation of the trivial group.
\end{cor}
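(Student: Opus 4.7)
The plan is to combine the Leibnitz rule for derivatives (Proposition~\ref{P:BZ}) with the filtration of representations of the mirabolic (Proposition~\ref{P:leibnitz}), exploiting at every stage that all proper derivatives of a cuspidal representation vanish. Restrict $\Pi = \pi_1 \times \cdots \times \pi_r$ in two steps along $\GL_n(\F_q) \supset E_{n-1} \supset \GL_{n-1}(\F_q)$. Proposition~\ref{P:leibnitz} produces a filtration of $\Pi|_{E_{n-1}}$ whose subquotients are $\text{ind}_{R_i}^{E_{n-1}}(\Pi^{n-i}\otimes \psi_{n-i})$ for $i = 0,1,\ldots, n-1$. Since complex representations of the finite group $E_{n-1}$ are semisimple, this filtration splits and it suffices to identify each piece after restriction to $\GL_{n-1}(\F_q)$.

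Iterating Proposition~\ref{P:BZ} gives a composition series of $\Pi^{n-i}$ whose pieces are $\pi_1^{k_1}\times \cdots \times \pi_r^{k_r}$ with $k_1+\cdots+k_r=n-i$. Cuspidality of each $\pi_j$ forces $\pi_j^{k_j}=0$ unless $k_j\in\{0,n_j\}$, with $\pi_j^{0}=\pi_j$ and $\pi_j^{n_j}$ the trivial representation of the trivial group (which disappears from the parabolic product). Letting $\{i_1<\cdots<i_s\}\subseteq\{1,\ldots,r\}$ denote the indices with $k_{i_l}=0$, the surviving pieces are exactly $\pi_{i_1}\times\cdots\times\pi_{i_s}$ with $n_{i_1}+\cdots+n_{i_s}=i$, a representation of $\GL_i(\F_q)$.

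Finally, fix one such piece and analyze
$$\text{ind}_{R_i}^{E_{n-1}}\bigl((\pi_{i_1}\times\cdots\times\pi_{i_s})\otimes \psi_{n-i}\bigr)\big|_{\GL_{n-1}(\F_q)}.$$
A direct order count shows $|\GL_{n-1}(\F_q)\cdot R_i| = |\GL_{n-1}(\F_q)|\cdot q^{n-1}=|E_{n-1}|$, so $E_{n-1} = \GL_{n-1}(\F_q)\cdot R_i$ and Mackey reduces the restriction to $\text{ind}_{R_i\cap\GL_{n-1}}^{\GL_{n-1}}$ applied to the same datum. Inspecting the block structure, $R_i\cap \GL_{n-1}(\F_q)$ equals $\GL_i\cdot U_i \cdot N_{n-1-i}$ inside the standard parabolic $P_i\subset\GL_{n-1}(\F_q)$ with Levi $\GL_i\times \GL_{n-1-i}$ and unipotent radical $U_i$, and $\psi_{n-i}$ restricts to $\psi_{n-1-i}$ on $N_{n-1-i}$ extended trivially across $U_i$. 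Inducing in stages, first from $R_i\cap\GL_{n-1}$ to $P_i$ to produce $\pi_{i_1}\times\cdots\times\pi_{i_s}\otimes \Sigma[n-1-i]$ on the Levi, then parabolically to $\GL_{n-1}(\F_q)$, yields the asserted summand $\pi_{i_1}\times\cdots\times\pi_{i_s}\times\Sigma[n-1-i]$. The main obstacle is pure bookkeeping: verifying the single-coset identity $E_{n-1} = \GL_{n-1}\cdot R_i$ and the character compatibility $\psi_{n-i}|_{N_{n-1-i}} = \psi_{n-1-i}$; once these are in hand, everything else is a mechanical composition of Leibnitz, Maschke, Mackey, and induction in stages.
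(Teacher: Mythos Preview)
Your proof is correct and follows essentially the same approach as the paper: apply the mirabolic filtration of Proposition~\ref{P:leibnitz}, use $E_{n-1}=\GL_{n-1}(\F_q)\cdot R_i$ together with Mackey to restrict each piece, and then compute the derivatives $\Pi^{n-i}$ via the Leibnitz rule and cuspidality. You have simply filled in the bookkeeping details (the order count for the single-coset identity and the induction-in-stages identification of the summand with $\pi_{i_1}\times\cdots\times\pi_{i_s}\times\Sigma[n-1-i]$) that the paper leaves implicit.
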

\vspace{2mm}

\begin{proof} By Proposition  \ref{P:leibnitz}, the restriction of $\Pi$ to $E_{n-1}$ is the sum of 
\[  \Pi_{i+1}/\Pi_i = {\rm ind}^{E_{n-1}}_{R_i} (\Pi^{n-i} \otimes \psi_{n-i}). \]
Since $\GL_{n-1}({\Bbb F}_q) \cdot R_i =  E_{n-1}$ 
for any $i$, it follows that
\[  (\Pi_{i+1}/\Pi_i)|_{\GL_{n-1}(\mathbb{F}_q)} 
 =  \Pi^{n-i}  \times \Sigma[n-1-i] , \]
 where $\Sigma[n-1-i]$ is the Gelfand-Graev module of $\GL_{n-1-i}(\mathbb{F}_q)$. 
It only remains to calculate
the derivatives $\Pi^{n-i}$ of $\Pi$, but this follows from
Proposition \ref{P:BZ}.   
\end{proof}

\vspace{2mm}

As a simple consequence of this corollary, we have the following.

 \vskip 5pt
 
\begin{thm} \label{T:GL-finite}
Let $n = n_1 + \cdots + n_r$ be a sum of positive integers, and 
let $\pi_i$ be an irreducible cuspidal representation of $\GL_{n_i}({\Bbb F}_q)$, for $i = 1, \cdots , r$.
Let $n-1 = m_1 + \cdots + m_s$ be a sum of positive integers, and 
let $\mu_i$ be an irreducible cuspidal representation of $\GL_{m_i}({\Bbb F}_q)$.
Assume that the representations 
$\mu_1,\cdots, \mu_s$ are pairwise distinct, so that 
  the corresponding parabolically induced representation  $\mu_1 \times \cdots \times \mu_s$
of $\GL_{n-1}({\Bbb F}_q)$ is irreducible. 
Then 
\[  \dim  \Hom_{\GL_{n-1}} (\pi_1 \times \cdots \times \pi_r,   \mu_1 \times \cdots \times \mu_s) \] 
is equal to
$$ \prod_{i=1}^s  (1+m_i) \geq 1,$$
where  $m_i$ is the multiplicity with which $\mu_i$ appears in the set $\{ \pi_1 \cdots ,\pi_r\}$. 
In particular, if the $\pi_i$'s are mutually distinct as well, then 
\[  \dim  \Hom_{\GL_{n-1}} (\pi_1 \times \cdots \times \pi_r,   \mu_1 \times \cdots \times \mu_s) 
= 2^d \] 
where $d$ is the cardinality of the set 
\[   \{\pi_1, \cdots \pi_r \} \cap  \{ \mu_1,...,\mu_s \}. \]
\end{thm}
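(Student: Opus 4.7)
The strategy is to combine the preceding Corollary with Harish--Chandra Frobenius reciprocity and the multiplicity-freeness of the Gelfand--Graev representation. By the Corollary, writing $n_I := \sum_{i\in I} n_i$ and $\pi_I$ for the parabolic induction of the cuspidals $\pi_i$ with $i \in I$, we obtain
\[  \dim \Hom_{\GL_{n-1}}(\Pi, M) \;=\; \sum_{\substack{I \subseteq \{1,\ldots,r\} \\ n_I < n}} \dim \Hom_{\GL_{n-1}}\bigl(\pi_I \times \Sigma[n-1-n_I],\, M\bigr), \]
where $M = \mu_1 \times \cdots \times \mu_s$. Each summand on the right will be handled by Frobenius reciprocity for parabolic induction/restriction, reducing the computation to the Levi $\GL_{n_I}(\mathbb{F}_q) \times \GL_{n-1-n_I}(\mathbb{F}_q)$.

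Explicitly, each summand equals $\dim \Hom_{\GL_{n_I} \times \GL_{n-1-n_I}}\bigl(\pi_I \otimes \Sigma[n-1-n_I],\, {}^{\ast}R(M)\bigr)$, where ${}^{\ast}R$ is Jacquet restriction along the corresponding parabolic. Since the $\mu_j$ are pairwise distinct cuspidal representations, the geometric lemma shows that the only Mackey orbits that survive are those Weyl elements conjugating the source Levi $\GL_{m_1}\times\cdots\times\GL_{m_s}$ blockwise into $\GL_{n_I}\times\GL_{n-1-n_I}$; these are in bijection with subsets $A \subseteq \{1,\ldots,s\}$ satisfying $\sum_{j\in A} m_j = n_I$, yielding
\[  {}^{\ast}R(M) \;=\; \bigoplus_{\substack{A \subseteq \{1,\ldots,s\} \\ \sum_{j \in A} m_j\, =\, n_I}} \mu_A \otimes \mu_{A^c}, \]
with $\mu_A$ the parabolic induction of the $\mu_j$ for $j \in A$.

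The two resulting factors are then computed by standard inputs. First, each $\mu_{A^c}$ is irreducible generic (being induced from distinct cuspidals), so $\dim \Hom(\Sigma[n-1-n_I],\mu_{A^c}) = 1$ by the classical decomposition of the Gelfand--Graev representation as the multiplicity-free sum of all irreducible generic representations. Second, $\Hom(\pi_I,\mu_A) \neq 0$ forces the cuspidal-support multisets $\{\pi_i : i \in I\}$ and $\{\mu_j : j \in A\}$ to coincide; since the latter is a set (the $\mu_j$ being distinct), this forces the former to be multiplicity-free as well, whereupon $\pi_I$ is irreducible and isomorphic to $\mu_A$, giving $\dim \Hom(\pi_I,\mu_A) = 1$.

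The proof concludes by a counting argument. Writing $c_j := \#\{i : \pi_i \cong \mu_j\}$ for the integer denoted $m_j$ in the statement, for each subset $A$ the number of $I$'s producing a bijection $i \mapsto \pi_i$ from $I$ onto $\{\mu_j : j \in A\}$ is $\prod_{j \in A} c_j$; summing over $A$ gives
\[  \sum_{A \subseteq \{1,\ldots,s\}} \prod_{j \in A} c_j \;=\; \prod_{j=1}^{s}(1+c_j), \]
which is the asserted formula. The final assertion, for pairwise distinct $\pi_i$, is the specialization $c_j \in \{0,1\}$, giving $2^d$ with $d = |\{\pi_i\} \cap \{\mu_j\}|$. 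The main obstacle in carrying this out is the geometric-lemma step, verifying that cuspidality of the $\mu_j$ really does collapse ${}^{\ast}R(M)$ to the block form above; semisimplicity of the category of $\GL_n(\mathbb{F}_q)$-representations (which also upgrades the filtration in the preceding Corollary to an honest direct sum) makes this a Mackey-theoretic bookkeeping issue rather than anything conceptual.
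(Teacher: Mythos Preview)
Your proposal is correct and is precisely the natural fleshing-out of what the paper means by calling the theorem ``a simple consequence of this corollary'': decompose $\Pi|_{\GL_{n-1}}$ via the corollary, apply Frobenius reciprocity to each piece $\pi_I\times\Sigma[n-1-n_I]$, compute the Jacquet module of $M$ using cuspidality of the $\mu_j$ (so only block permutations survive), and finish with the multiplicity-freeness of the Gelfand--Graev representation together with the elementary identity $\sum_{A}\prod_{j\in A}c_j=\prod_j(1+c_j)$. The paper gives no further details, so your argument \emph{is} the intended one.
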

\vskip 15pt

\begin{cor}  \label{C:GL-finite2}
The restriction of the representation $\pi_1 \times \cdots \times \pi_r$ of $\GL_n(\mathbb{F}_q)$ contains the representation $\mu_1 \times \cdots \times \mu_s$ of $\GL_{n-1}(\mathbb{F}_q)$ 
(with $\mu_i$'s mutually distinct) with multiplicity one if and only if the sets  $\{ \pi_1, \cdots , \pi_r \}$ and $\{ \mu_1, \cdots , \mu_s \}$ have no common elements.

 \end{cor}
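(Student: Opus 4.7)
The plan is to deduce the corollary directly from Theorem \ref{T:GL-finite}. Since the cuspidal representations $\mu_1,\ldots,\mu_s$ are mutually distinct by hypothesis, the induced representation $\mu_1 \times \cdots \times \mu_s$ is irreducible, and Theorem \ref{T:GL-finite} applies to give the exact formula
\[ \dim \Hom_{\GL_{n-1}}\bigl(\pi_1 \times \cdots \times \pi_r,\; \mu_1 \times \cdots \times \mu_s\bigr) = \prod_{i=1}^s (1+m_i), \]
where $m_i$ denotes the multiplicity of $\mu_i$ in the multiset $\{\pi_1,\ldots,\pi_r\}$.

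The second step is to read off the multiplicity-one condition. Each factor $1+m_i$ is a positive integer, so the product equals $1$ if and only if every $m_i$ is zero, meaning that no $\mu_i$ appears among the $\pi_j$'s. In view of the distinctness of the $\mu_i$'s, this is equivalent to the set-theoretic disjointness $\{\pi_1,\ldots,\pi_r\} \cap \{\mu_1,\ldots,\mu_s\} = \emptyset$. The corollary is therefore immediate; there is no genuine obstacle, as all the combinatorial and representation-theoretic work has already been carried out in Theorem \ref{T:GL-finite} via the Leibnitz rule (Proposition \ref{P:BZ}) and the filtration of mirabolic representations (Proposition \ref{P:leibnitz}).
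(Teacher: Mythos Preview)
Your proposal is correct and matches the paper's treatment: the corollary is stated immediately after Theorem \ref{T:GL-finite} with no separate proof, being an obvious consequence of the formula $\prod_i (1+m_i)$. Your observation that this product equals $1$ precisely when every $m_i=0$ is exactly the intended deduction.
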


\vskip 15pt

\section{Branching laws for $\U_n({\Bbb F}_q)$}
In this section, we use the method of base change, also called Shintani descent, 
to deduce some conclusions about 
branching laws for the restriction of a representation of
$\U_n({\Bbb F}_q)$ to  $\U(n-1,{\Bbb F}_q)$ from the corresponding
results for general linear groups obtained in the previous
section. The result is then applied to give a proof of Theorem \ref{T:depth-zero-sc}.
\vskip 10pt

We make crucial use of the multiplicity 1 theorem for
restriction of representations of unitary groups over $p$-adic fields,  which was recently proved  by Aizenbud, Gourevitch, Rallis and Schiffmann in [AGRS].   A simple consequence of their result is:
 
\vskip 5pt

\begin{prop} \label{P:AGRS-finite}
Let $\pi_1$ be an irreducible  cuspidal representation of $\U_n(\mathbb{F}_q)$ and let 
\[  \pi_2 = I_P(\sigma) \]
be an irreducible  generalized principal series representation of $\U_{n+1}(\mathbb{F}_q)$, where $P$ is a parabolic subgroup of $\U_{n+1}(\mathbb{F}_q)$  and $\sigma$ is a cuspidal representation of 
a Levi factor of $P$. We allow the possibility that $P = \U_{n+1}$, in which case $\pi_2  =\sigma$ is cuspidal.  Then
\[  \dim \Hom_{\U_n(\mathbb{F}_q)}(\pi_2, \pi_1) \leq 1. \]
\end{prop}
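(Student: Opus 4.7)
The plan is to deduce this finite-field multiplicity bound from the $p$-adic multiplicity one theorem [AGRS] by lifting both $\pi_1$ and $\pi_2$ to irreducible depth-zero representations of the corresponding $p$-adic unitary groups, and then comparing finite-field and $p$-adic Hom spaces.

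First, fix a non-archimedean local field $k_0$ with residue field $\F_q$, and let $k/k_0$ be the unramified quadratic extension. Choose compatible hyperspecial maximal compact subgroups $K \subset \U_n(k_0)$ and $K' \subset \U_{n+1}(k_0)$ arising from an inclusion of hermitian lattices $L_n \subset L_{n+1}$, with pro-unipotent radicals $K^+$ and $K'^+$ satisfying $K/K^+ \cong \U_n(\F_q)$, $K'/K'^+ \cong \U_{n+1}(\F_q)$, and $K^+ = K \cap K'^+$. Inflate the cuspidal representation $\pi_1$ to $K$ and form the compact induction $\Pi_1 = \text{c-}\Ind_K^{\U_n(k_0)}(\pi_1)$; by the DeBacker--Reeder construction recalled in Section~\ref{S:depth0}, this is an irreducible depth-zero supercuspidal representation of $\U_n(k_0)$. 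For $\pi_2 = I_P(\sigma)$, lift the Levi $L$ of $P$ to a $k_0$-Levi $M$ of a parabolic $\tilde P \subset \U_{n+1}(k_0)$, lift $\sigma$ to an irreducible depth-zero supercuspidal $\tilde\sigma$ of $M(k_0)$, and set $\Pi_2 = \Ind_{\tilde P}^{\U_{n+1}(k_0)}(\tilde\sigma)$. After twisting $\tilde\sigma$ by a sufficiently generic unramified character of $M(k_0)$ (which leaves its depth-zero reduction unchanged, hence preserves $\pi_2$ under the finite parabolic induction), one can arrange that $\Pi_2$ is irreducible.

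The main step is to establish the inequality
\[
\dim \Hom_{\U_n(\F_q)}(\pi_2, \pi_1) \leq \dim \Hom_{\U_n(k_0)}(\Pi_2, \Pi_1),
\]
after which AGRS gives the desired bound of one. Two identifications are crucial: $\Pi_1^{K^+} \cong \pi_1$ as $K/K^+$-modules (by Mackey's formula applied to $\text{c-}\Ind_K^{\U_n(k_0)}\pi_1$ together with the cuspidality of $\pi_1$), and $\Pi_2^{K'^+} \cong \pi_2$ as $K'/K'^+$-modules (by the Iwasawa decomposition $\U_{n+1}(k_0) = \tilde P \cdot K'$ applied to the parabolic induction, using the depth-zero nature of $\tilde\sigma$). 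Given a nonzero $\U_n(\F_q)$-intertwiner $\phi : \pi_2 \to \pi_1$, I would construct a nonzero $\U_n(k_0)$-intertwiner $\Pi_2 \to \Pi_1$ via Frobenius reciprocity for compact induction combined with the structure theory of depth-zero representations.

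The main obstacle is precisely this lifting step. The subtlety is that $K^+ \subsetneq K'^+$ inside $\U_{n+1}(k_0)$, so $\Pi_2^{K^+}$ is strictly larger than $\pi_2 = \Pi_2^{K'^+}$ as a $K/K^+ \cong \U_n(\F_q)$-module in general, and the naive form of Frobenius reciprocity gives an inequality involving the larger module $\Pi_2^{K^+}$ rather than $\pi_2$. Verifying that every $\U_n(\F_q)$-intertwiner $\pi_2 \to \pi_1$ extends nondegenerately to a $\U_n(k_0)$-intertwiner $\Pi_2 \to \Pi_1$ requires careful bookkeeping within the depth-zero Bernstein block on $\U_{n+1}(k_0)$, and essentially uses the cuspidality of $\pi_1$ to rule out spurious contributions to $\pi_1$-isotypic components coming from $\Pi_2^{K^+}/\Pi_2^{K'^+}$.
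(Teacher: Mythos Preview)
Your approach is exactly the paper's approach, and you have correctly set up all the ingredients: the depth-zero lifts $\Pi_1$, $\Pi_2$, the Frobenius reciprocity identification, and the inclusion $\pi_2 = \Pi_2^{K'^+} \subset \Pi_2^{K^+}$. But you have manufactured a difficulty where none exists.

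You correctly note that Frobenius reciprocity (for $\Pi_1 = \text{c-}\Ind_K^{\U_n(k_0)}\pi_1 = \Ind_K^{\U_n(k_0)}\pi_1$) gives
\[
\Hom_{\U_n(k_0)}(\Pi_2,\Pi_1)\;\cong\;\Hom_K(\Pi_2,\pi_1)\;\cong\;\Hom_{\U_n(\F_q)}\bigl(\Pi_2^{K^+},\pi_1\bigr),
\]
the second isomorphism because $\pi_1$ is inflated from $K/K^+$. You then worry that passing from $\Pi_2^{K^+}$ down to its submodule $\pi_2$ requires ``careful bookkeeping'' to rule out ``spurious contributions''. It does not. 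Both $\pi_2$ and $\Pi_2^{K^+}$ are finite-dimensional complex representations of the finite group $\U_n(\F_q)$, hence semisimple; so $\pi_2$ is a \emph{direct summand} of $\Pi_2^{K^+}$, and the inequality
\[
\dim\Hom_{\U_n(\F_q)}(\pi_2,\pi_1)\;\le\;\dim\Hom_{\U_n(\F_q)}\bigl(\Pi_2^{K^+},\pi_1\bigr)
\]
is immediate. The extra pieces of $\Pi_2^{K^+}/\pi_2$ can only \emph{increase} the right-hand side, which is precisely the direction you need. No further use of the cuspidality of $\pi_1$ is required at this step; cuspidality enters only to make $\Pi_1$ irreducible supercuspidal so that AGRS applies to bound the $p$-adic Hom space by $1$. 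With this one-line observation, your argument is complete and coincides with the paper's.
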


\vskip 5pt

\begin{proof}
Let $k_0$ be  a local field with $\mathbb{F}_q$ as its residue field and let $k$ be 
its unramified quadratic extension. Then 
one can find quasi-split unitary groups $\U(W_0)$ and $\U(W)$ with $W_0 \subset W$, 
such that $\U(W_0) \times \U(W)$ over $k_0$ contains a hyperspecial maximal compact subgroup $K_0 \times K$ with reductive quotient 
 $\U_n(\mathbb{F}_q) \times \U_{n+1}(\mathbb{F}_q)$. Moreover, one may find a maximal parabolic subgroup $\tilde{P}$ of $\U(W)$, such that $\tilde{P} \cap K$ maps to the parabolic $P$ in the reductive quotient $\U_{n+1}(\mathbb{F}_q)$.  
 
 \vskip 5pt
 
 Let $\tilde{\pi}_1$ be a depth zero supercuspidal representations of  $\U(W_0)$
  which is obtained from $\pi_1$ by compact induction, so that
  \[  \tilde{\pi}_1 = {\rm ind}_{K_0}^{\U(W_0)} \pi_1 = {\rm Ind}_{K_0}^{\U(W_0)} \pi_1. \] 
 Similarly, let $\tilde{\sigma}$ be a depth zero supercuspidal representation of the Levi factor of $\tilde{P}$ which contains $\tau$ as a type. Then we may consider the generalized principal series representation $I_{\tilde{P}}(\tilde{\sigma})$ of $\U(W)$ which is irreducible for a generic choice of 
 $\tilde{\sigma}$. Moreover, if $K_1$ denotes the kernel of the natural projection map 
 \[  K \twoheadrightarrow \U_{n+1}(\mathbb{F}_q), \]
 then one has
 \[   I_{\tilde{P}}(\tilde{\sigma})^{K_1}
= I_P(\sigma). \]
 
 \vskip 5pt
 
 Now by Frobenius reciprocity, we have
 \begin{align}
 \dim \Hom_{\U(W_0)}( I_{\tilde{P}}(\tilde{\sigma}),  \tilde{\pi}_1) 
&=\dim \Hom_{K_0} ( I_{\tilde{P}}(\tilde{\sigma}), \pi_1) \notag \\
&= \dim \Hom_{\U_n(\mathbb{F}_q)}( I_{\tilde{P}}(\tilde{\sigma})^{K_{0,1}}, \pi_1)\notag
\end{align} 
where $K_{0,1}$ is the kernel of the projection map $K_0 \twoheadrightarrow \U_n(\mathbb{F}_q)$. Since $K_{0,1} \subset K_1$, we have
\[   I_{\tilde{P}}(\tilde{\sigma})^{K_{0,1}}  \supset   I_{\tilde{P}}(\tilde{\sigma})^{K_1}
= I_P(\sigma). \]
Thus we conclude that
\[  \dim \Hom_{\U(W_0)}( I_{\tilde{P}}(\tilde{\sigma}),  \tilde{\pi}_1) \geq
 \Hom_{\U_n(\mathbb{F}_q)}(\pi_2, \pi_1). \]
 By [AGRS],  the LHS is bounded above by $1$ for a generic choice of $\tilde{\sigma}$ (so that $I_{\tilde{P}}(\tilde{\sigma})$ is irreducible), and hence so is the RHS. This proves the proposition.
 \end{proof}
 
 \vskip 10pt
 
 A corollary of the above proposition is the uniqueness of Bessel models for cuspidal representations of unitary groups over finite fields. 
 \vskip 10pt

 \begin{prop} \label{P:AGRS-finite2}
 Let $\pi_1$ be an 
irreducible cuspidal representation of $\U_n({\Bbb F}_q)$, 
and let $\pi_2$ be an irreducible cuspidal representation
of $\U_m({\Bbb F}_q)$ with $n > m$ but $m \not \equiv  n \mod 2$.
\vskip 10pt

\noindent (i) Let $P$ be a maximal parabolic subgroup of $\U_{n+1}(\mathbb{F}_q)$ with Levi factor $\GL_r(\mathbb{F}_{q^2}) \times \U_m(\mathbb{F}_q)$ (so that $m+2r = n+1$) and let $\tau$ be a cuspidal representation of $\GL_r(\mathbb{F}_{q^2})$.
Consider the generalized principal series representation $I_P(\tau \boxtimes \pi_2)$. Then,  with the data $(H, \nu)$ defined as in [GGP, \S 12], we have

\[  \Hom_{H}(\pi_1 \boxtimes  \pi_2, \nu) \cong  \Hom_{\U_n(\mathbb{F}_q)}(I_P(\tau, \pi_2), \pi_1^{\vee}) \]
\vskip 10pt

\noindent (ii) We have:
\[  \dim \Hom_{H}(\pi_1 \boxtimes  \pi_2, \nu) \leq 1. \]
 \end{prop}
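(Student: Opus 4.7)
The content of the proposition lies in part (i); part (ii) is an immediate corollary of (i) combined with Proposition \ref{P:AGRS-finite}. Indeed, for $\tau$ a cuspidal representation of $\GL_r(\mathbb{F}_{q^2})$ in sufficiently general position, $I_P(\tau \boxtimes \pi_2)$ is an irreducible generalized principal series of $\U_{n+1}(\mathbb{F}_q)$ whose inducing data is cuspidal on the Levi of $P$, so Proposition \ref{P:AGRS-finite} yields
\[
\dim \Hom_{\U_n(\mathbb{F}_q)}\bigl(I_P(\tau \boxtimes \pi_2),\, \pi_1^\vee\bigr) \leq 1,
\]
and (i) transfers this bound to the Bessel Hom space on the left.

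For (i), the plan is a Mackey--Frobenius unfolding applied to the restriction $I_P(\tau \boxtimes \pi_2)|_{\U_n(\mathbb{F}_q)}$. The double cosets in $P \backslash \U_{n+1}/\U_n$ parametrize the relative positions of an $r$-dimensional isotropic subspace $X \subset W_{n+1}$ with respect to the non-degenerate hyperplane $W_n \subset W_{n+1}$ determined by $\U_n$; the invariant is $\dim(X \cap W_n)$, which ranges from $r-1$ (the open orbit) down to smaller values. I would first show that only the open orbit contributes to the $\Hom$ with $\pi_1^\vee$: on a non-open orbit the corresponding summand is induced from a subgroup whose intersection with $\U_n$ sits inside a proper parabolic of $\U_n$, so by Frobenius reciprocity the contribution is controlled by a proper Jacquet module of the cuspidal $\pi_1$, which vanishes.

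On the open orbit, $X \cap W_n$ is an $(r-1)$-dimensional isotropic subspace of $W_n$, and the stabilizer $P^g \cap \U_n$ contains the natural copy of $\U_m(\mathbb{F}_q)$ and the Bessel unipotent $N$ from [GGP, \S 12], together with a copy of the mirabolic of $\GL_r(\mathbb{F}_{q^2})$ acting on $X$. By the Gelfand--Kazhdan theorem, the restriction of the cuspidal $\tau$ to this mirabolic is the full Gelfand--Graev representation; feeding this into the Frobenius reciprocity on the open orbit converts the generic character of $\GL_r$ into precisely the Bessel character $\nu$ of [GGP, \S 12]. After this unfolding the right-hand side becomes $\Hom_H(\pi_1 \boxtimes \pi_2, \nu)$, which is (i).

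The main technical obstacle will be the bookkeeping on the open orbit: identifying its stabilizer explicitly with the Bessel subgroup enlarged by the mirabolic of $\GL_r(\mathbb{F}_{q^2})$, and verifying that the generic character extracted from $\tau$ via Gelfand--Kazhdan agrees with the precise normalization of $\nu$ in [GGP, \S 12] and not merely a twist thereof. The vanishing argument for non-open orbits is also delicate because several orbits are indexed by $\dim(X \cap W_n)$; in each case one must pinpoint a unipotent subgroup of a proper parabolic of $\U_n$ on which the inducing representation is trivial, so that cuspidality of $\pi_1$ (rather than of $\pi_2$ or $\tau$, which may not suffice on every orbit) kills the contribution.
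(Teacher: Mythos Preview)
Your proposal is correct and matches the paper's approach: the paper proves (i) by citing it as the finite field analog of [GGP, Theorem 15.1] ``with the same proof,'' and what you have sketched---the Mackey decomposition of $I_P(\tau\boxtimes\pi_2)|_{\U_n}$, the vanishing of non-open orbits via cuspidality of $\pi_1$, and the Gelfand--Kazhdan reduction on the open orbit producing the Bessel character---is precisely that argument. Your deduction of (ii) from (i) together with Proposition~\ref{P:AGRS-finite} (choosing $\tau$ so that $I_P(\tau\boxtimes\pi_2)$ is irreducible) is exactly what the paper does as well, the only cosmetic difference being that the paper singles out the case $n=m+1$ as already a direct instance of Proposition~\ref{P:AGRS-finite}.
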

\vskip 5pt

\begin{proof}
\noindent (i) This is the finite field analog of [GGP, Theorem 15.1], with the same proof.
\vskip 5pt

\noindent (ii) If $n = m+1$, (ii) is a special case of Proposition \ref{P:AGRS-finite}.
In the general case when $n > m+1$, we choose
 $\tau$ in the context of (i) so that the induced representation $I_P(\tau \boxtimes \pi_2)$ is irreducible. Then (ii) follows immediately from (i) and Proposition \ref{P:AGRS-finite}.
\end{proof}

 \vskip 10pt

After the above propositions,  we shall study the restriction problem from $\U_n(\mathbb{F}_q)$ to $\U_{n-1}(\mathbb{F}_q)$ using Shintani descent. We begin by giving a brief review of this notion.
 
 \vskip 10pt
 Let $G$ be a connected 
reductive algebraic group over ${\Bbb F}_q$ and let $m \geq 1$
be a fixed integer. The group $G({\Bbb F}_{q^m})$ comes equipped
with its Frobenius automorphism $F$, whose set of fixed points is
$G({\Bbb F}_q)$. There is a natural map, called the norm mapping,
 \[  \{ \text{$F$-conjugacy 
classes in $G({\Bbb F}_{q^m})$} \} \longrightarrow   \{\text{conjugacy classes in $G({\Bbb F}_q)$}\} \]
which is a bijection. The norm mapping thus induces an isomorphism of vector spaces
\[  \{\text{class functions on $G(\mathbb{F}_q)$} \} \longrightarrow \{ \text{$F$-class functions on $G(\mathbb{F}_{q^m})$}\}, \]
which is called the base change map, and whose inverse is called Shintani descent.
Furthermore, the base change map is an isometry:

$$\langle \chi_1,\chi_2\rangle _{G({\Bbb F}_q)} =
 \langle \chi_1',\chi_2'\rangle _{G({\Bbb F}_{q^m})},$$
where $\chi_1$ and $\chi_2$ are class functions on $G({\Bbb F}_q)$
which are Shintani descents of
the $F$-class functions $\chi'_1$ and $\chi'_2$ 
on $G({\Bbb F}_{q^m})$. Here, the inner products are normalized so that
$$\langle 1,1\rangle _{G({\Bbb F}_q)} = \langle 1,1\rangle _{G({\Bbb F}_{q^m})}=1.$$

\vskip 10pt

According to Deligne-Lusztig, given a maximal torus $T$ 
of $G$ defined over ${\Bbb F}_q$, and a character 
\[ \theta: T({\Bbb F}_q) \rightarrow {\Bbb C}^{\times},\] 
there is a (virtual) representation of 
$G({\Bbb F}_q)$ denoted by $R(T,\theta)$, which is called a Deligne-Lusztig
representation. 
Now given a character $\theta$ as above, one has
the character 
\[ \theta_m: T_m=T({\Bbb F}_{q^m}) \rightarrow {\Bbb C}^{\times} \]
 obtained
by composing $\theta$ with the norm mapping: $T({\Bbb F}_{q^m}) \rightarrow T({\Bbb F}_q)$.
Thus one may consider the Deligne-Lusztig representation $R(T_m, \theta_m)$.
The following lemma is [DL, 5.16]:
\vskip 5pt

\begin{lemma} 
If $G$ has connected center, then if $R(T,\theta)$ is irreducible, so is 
$R(T_m, \theta_m)$.
\end{lemma}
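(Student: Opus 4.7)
The plan is to combine the Deligne--Lusztig inner product formula with a descent argument along the norm map. By that formula,
$$
\langle R(T,\theta),R(T,\theta)\rangle_{G^F} \;=\; |\mathrm{Stab}_{W(T)^F}(\theta)|,
$$
and similarly $\langle R(T_m,\theta_m),R(T_m,\theta_m)\rangle_{G^{F^m}} = |\mathrm{Stab}_{W(T)^{F^m}}(\theta_m)|$. When $G$ has connected center, the centralizer of every semisimple element is connected, and the standard argument of Deligne--Lusztig shows that $R(T,\theta)$ is an honest irreducible character precisely when the first stabilizer is trivial, with the analogous statement for $R(T_m,\theta_m)$ over $G^{F^m}$. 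Hence the lemma reduces to showing that triviality of $\mathrm{Stab}_{W(T)^F}(\theta)$ forces triviality of $\mathrm{Stab}_{W(T)^{F^m}}(\theta_m)$.

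One inclusion is immediate: any $w\in W(T)^F$ commutes with $F$ and hence with the norm map $N\colon T^{F^m}\to T^F$, so $w\cdot\theta=\theta$ implies $w\cdot(\theta\circ N)=\theta\circ N$. Thus $\mathrm{Stab}_{W(T)^F}(\theta)\subseteq \mathrm{Stab}_{W(T)^{F^m}}(\theta_m)$, and the real content is the reverse inclusion.

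For the reverse inclusion I would take $w\in W(T)^{F^m}$ with $w\cdot\theta_m=\theta_m$ and unfold the equation $\theta(N(w^{-1}\cdot t))=\theta(N(t))$ for all $t\in T^{F^m}$. Since $T$ is abelian, the left-hand side expands as $\theta\bigl(\prod_{i=0}^{m-1} F^i(w)^{-1}\cdot F^i(t)\bigr)$, producing a product relation between $\theta$ and its twisted translates along the $F$-orbit $\{w,F(w),\ldots,F^{m-1}(w)\}$ of $w$. Varying $t$, and using Lang's theorem on $T$ to produce sufficiently generic elements of $T^{F^m}$, this identity should descend to the assertion that each $F^i(w)$ already stabilizes $\theta$ on $T^F$; in particular $w\in W(T)^F$ and $w\cdot\theta=\theta$, forcing $w=1$ by hypothesis.

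The main obstacle is this descent step, in which one must separate a single product identity into individual constraints on the $F^i(w)$. This is precisely where the connected-center hypothesis enters, as it guarantees the surjectivity of $N_G(T)^{F^m}\to W(T)^{F^m}$ (so Weyl elements lift to genuine group elements without cohomological obstruction) and rules out the pathological cancellations that could otherwise produce spurious elements of the larger stabilizer.
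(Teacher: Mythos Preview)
The paper itself gives no proof; it simply cites [DL, 5.16]. So the comparison is to the standard Deligne--Lusztig argument, and measured against that, your proposal has a genuine gap and misidentifies where the connected-center hypothesis is used.

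Your reduction via the inner product formula to comparing the two stabilizers is correct. The problem is the descent step. You aim to show that any $w\in W(T)^{F^m}$ fixing $\theta_m$ must lie in $W(T)^F$ and fix $\theta$. But there is no reason to expect $w$ to be $F$-fixed: since $\theta_m$ is $F$-invariant, $F$ permutes the stabilizer $\mathrm{Stab}_{W(T)^{F^m}}(\theta_m)$, yet it need not fix individual elements. Your proposed unfolding of the single product identity $\theta(N({}^{w^{-1}}t))=\theta(N(t))$ into separate constraints on the $F^i(w)$ is not carried out, and the elements $t,F(t),\ldots,F^{m-1}(t)$ are not independent, so the appeal to ``sufficiently generic'' $t$ via Lang's theorem does not separate them. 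Finally, the surjectivity $N_G(T)^{F^m}\twoheadrightarrow W(T)^{F^m}$ you invoke holds for \emph{every} connected reductive $G$ (Lang's theorem for the connected torus $T$), so this cannot be the role of the connected-center hypothesis.

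The argument that actually works passes to the dual group. The character $\theta$ corresponds to a semisimple element $s\in (T^*)^{F^*}$, and $\theta_m=\theta\circ N$ corresponds to the \emph{same} $s$, viewed in $(T^*)^{(F^*)^m}$. The stabilizer of $\theta$ in $W(T)^F$ becomes $W\bigl(C_{G^*}(s),T^*\bigr)^{F^*}$, and likewise with $(F^*)^m$. The connected-center hypothesis on $G$ is precisely what makes $C_{G^*}(s)$ connected, so that $W\bigl(C_{G^*}(s),T^*\bigr)$ is the honest Weyl group of a connected reductive group $H$ carrying the Frobenius $F^*$. One then observes that for such an $H$, the condition $W(H,T^*)^{F^*}=1$ forces $H$ to be a torus: indeed, it implies there is a unique $F^*$-twisted conjugacy class in $W(H,T^*)$, hence a unique $H^{F^*}$-class of rational maximal tori, which over a finite field happens only when $H$ is a torus. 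Thus $W(H,T^*)=1$ outright, and in particular $W(H,T^*)^{(F^*)^m}=1$, giving the irreducibility of $R(T_m,\theta_m)$.
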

\vskip 10pt

Henceforth, we assume that $G$ has connected center and $R(T, \theta)$ is irreducible.
The irreducible representation $R(T_m, \theta_m)$ is invariant by $F$ and thus can be extended (in two ways) to the semi-direct product $G(\mathbb{F}_{q^m}) \rtimes \langle F \rangle$. For any such extension, the restriction of its character to the coset $G(\mathbb{F}_{q^m}) \cdot F$ is a $F$-class function, and one may consider its Shintani descent.
The following is a basic fact in the theory of Shintani descent:
\vskip 10pt

\begin{prop} \label{P:shintani}
There is an extension of $R(T_m,\theta_m)$ whose associated Shintani descent is the representation $R(T,\theta)$.  
\end{prop}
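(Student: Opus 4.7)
The plan is to realize the extension $\tilde R$ of $R(T_m,\theta_m)$ geometrically via the natural action of the small Frobenius $F$ on the Deligne--Lusztig variety, and then to verify the Shintani identity using the Lefschetz trace formula. Let $Y$ denote the Deligne--Lusztig variety attached to $T \subset G$, viewed as a variety over $\overline{\mathbb{F}}_q$; it carries commuting actions of $G(\mathbb{F}_{q^m})$ and $T(\mathbb{F}_{q^m})$ (both realized as $F^m$-fixed points), and $R(T_m,\theta_m)$ is realized as the $\theta_m$-isotypic component of the virtual $\ell$-adic cohomology
$$H^*_c(Y) \;=\; \sum_i (-1)^i H^i_c\bigl(Y, \overline{\mathbb{Q}}_\ell\bigr).$$

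Because both $T$ and the defining conditions of $Y$ are already $\mathbb{F}_q$-rational, the small Frobenius $F$ acts on $Y$, normalizing the actions of $G(\mathbb{F}_{q^m})$ and $T(\mathbb{F}_{q^m})$ by $F$-conjugation. Since $\theta_m = \theta \circ N_m$ is $F$-invariant, this $F$-action preserves the $\theta_m$-isotypic summand and furnishes an extension $\tilde R$ of $R(T_m,\theta_m)$ to the semidirect product $G(\mathbb{F}_{q^m}) \rtimes \langle F \rangle$. The overall sign of the $F$-action on cohomology is not canonical; this reflects the two possible extensions.

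To verify the Shintani descent identity, I would use the Lefschetz trace formula to compute $\tr\bigl(gF \mid H^*_c(Y, \overline{\mathbb{Q}}_\ell)\bigr)$ for $g \in G(\mathbb{F}_{q^m})$. Applying Lang's theorem to the connected torus $T$, the fixed locus of $gF$ on $Y$ is placed in bijection with the fixed locus of $N_m(g)$ on the analogous Deligne--Lusztig variety for $G$ over $\mathbb{F}_q$, with the $T(\mathbb{F}_{q^m})$-eigenvalues on one side matching the $T(\mathbb{F}_q)$-eigenvalues on the other under the norm map $N_m \colon T(\mathbb{F}_{q^m}) \to T(\mathbb{F}_q)$. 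Weighting by $\theta_m$ on the left and $\theta$ on the right then yields
$$\tr\bigl(g F \,\big|\, \tilde R \bigr) \;=\; \chi_{R(T,\theta)}\bigl( N_m(g) \bigr),$$
which is precisely the character identity characterizing Shintani descent.

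The main obstacle is the precise bookkeeping in the Lefschetz computation: identifying the two fixed-point sets and matching the torus eigenvalues via the norm map. It suffices to check the identity on $F$-semisimple cosets $gF$, since both sides behave compatibly under the $F$-twisted Jordan decomposition and reduce to identities on centralizers of semisimple parts. In the semisimple case, the required bijection of fixed points reduces, via Lang--Steinberg applied to the connected torus $T$, to the elementary fact that $F$-conjugacy classes in $T(\mathbb{F}_{q^m})$ are in bijection with elements of $T(\mathbb{F}_q)$ via $N_m$, so the signs and eigenvalue contributions match on the nose.
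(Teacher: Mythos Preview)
The paper does not actually prove this proposition; it is stated as ``a basic fact in the theory of Shintani descent'' and invoked without argument. So there is no paper proof to compare against. Your geometric approach---extending $R(T_m,\theta_m)$ via the natural Frobenius action on the Deligne--Lusztig variety and then verifying the twisted character identity by a Lefschetz fixed-point computation---is exactly the standard route, essentially that of Digne--Michel (\emph{Fonctions $L$ des vari\'et\'es de Deligne--Lusztig et descente de Shintani}, M\'em. Soc. Math. France 20, 1985).

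That said, your reduction step is too quick. The assertion that ``it suffices to check the identity on $F$-semisimple cosets $gF$, since both sides behave compatibly under the $F$-twisted Jordan decomposition'' is not a formal fact about class functions: two class functions agreeing on semisimple elements need not agree everywhere. What actually makes the reduction work is that \emph{both} sides satisfy a character formula of Deligne--Lusztig type, expressing the value at $gF$ (respectively at $N_m(g)$) as a sum over certain tori of $\theta$-values times Green functions of centralizers. One must establish the twisted version of this character formula for $\operatorname{tr}(gF \mid \tilde R)$ and then check that the Green-function contributions on the two sides match. Your ``bijection of fixed loci'' heuristic is the right intuition for the regular semisimple case, but at general elements the Lefschetz computation does not literally reduce to counting fixed points; it requires the full machinery of the Deligne--Lusztig character formula. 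So the outline is sound, but the technical heart of the argument---the twisted character formula and the matching of Green functions---is precisely what you have elided.
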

\vskip 10pt

Now we can begin our study of the restriction problem for unitary groups over finite fields. 
Let 
\[  \begin{cases}
\pi_1 = R(T, \theta) \\
 \pi_2 = R(T', \theta') \end{cases} \]
 be irreducible Deligne-Lusztig representations of $\U_n(\mathbb{F}_q)$ and $\U_{n-1}(\mathbb{F}_q)$ respectively, and let  $\chi_i$ be the character
of $\pi_i$.  We shall consider the quadratic base change of $\pi_i$.
By Proposition \ref{P:shintani}, there are extensions of the representations 
\vskip 5pt
\[  \begin{cases}
\text{$R(T_2, \theta_2)$ of $\GL_n({\Bbb F}_{q^2})$;} \\
\text{ $R(T'_2, \theta'_2)$ of  $\GL_{n-1}({\Bbb F}_{q^2})$,} 
\end{cases} \] 
whose associated Shintani descents are $\chi_1$ and $\chi_2$ respectively.
Fixing such an extension in each case, we denote the corresponding character of this distinguished extension by $\chi_i'$.  
 \vskip 10pt

It follows that 

$$ 2 \cdot \langle \chi'_1,\chi'_2\rangle _{\GL_{n-1}({\Bbb F}_{q^2}) \rtimes \langle F\rangle } =  
\langle\chi'_1,\chi'_2\rangle _{\GL_{n-1}({\Bbb F}_{q^2})} +   
\langle\chi'_1,\chi'_2\rangle _{\GL_{n-1}({\Bbb F}_{q^2})\cdot F},$$  
and thus
$$2 \cdot \langle\chi'_1,\chi'_2\rangle _{\GL_{n-1}({\Bbb F}_{q^2}) \rtimes \langle F\rangle } =  
\langle \chi'_1,\chi'_2\rangle _{\GL_{n-1}({\Bbb F}_{q^2})} +   
\langle\chi_1,\chi_2\rangle _{\U_{n-1}({\Bbb F}_{q})}.$$  
Now we observe that:
\vskip 5pt

\begin{enumerate}[(i)]
\item  the left hand side of this last equality is an
{\em even} integer;
\vskip 5pt

\item  the quantity 
$\langle\chi'_1,\chi'_2\rangle _{\GL_{n-1}({\Bbb F}_{q^2})}$ was computed in Theorem \ref{T:GL-finite}, under the assumption that $R(T_2',\theta'_2)$ is an irreducible representation;
\vskip 5pt

\item the quantity  $\langle\chi_1,\chi_2\rangle _{\U_{n-1}({\Bbb F}_{q})}$ is equal to $0$ or $1$ in certain cases, by Proposition \ref{P:AGRS-finite}.
\end{enumerate}
\vskip 5pt
\noindent  Together, these observations allow one to compute $\langle\chi_1,
\chi_2\rangle _{\U_{n-1}({\Bbb F}_{q})}$ in certain situations.
\vskip 10pt

Namely, let us assume that $\pi_1$ and $\pi_2$ are irreducible Deligne-Lusztig representations, 
and suppose further that $\pi_2$ is cuspidal. Then the quadratic base change $\pi_1'$ and $\pi_2'$ of $\pi_1$ and $\pi_2$ are  are irreducible full principal series representations of $\GL_n(\mathbb{F}_{q^2})$ and $\GL_{n-1}(\mathbb{F}_{q^2})$. Thus, Theorem \ref{T:GL-finite} implies that
\[  \langle\chi'_1,\chi'_2\rangle _{\GL_{n-1}({\Bbb F}_{q^2})} = \begin{cases}
\text{$1$, 
if the cuspidal supports of $\pi_1'$ and $\pi_2'$ are disjoint,} \\
 \text{an even integer,  otherwise.} 
\end{cases} \] 
On the other hand, by Proposition \ref{P:AGRS-finite},  
$\langle\chi_1,\chi_2\rangle _{\U_{n-1}({\Bbb F}_{q})}$ is either 0 or 1. 
Therefore we get the following theorem as our only option.
\vskip 5pt

\begin{thm}
Let $\pi_1$ and $\pi_2$ be irreducible Deligne-Lusztig representations and suppose that $\pi_2$ is cuspidal. Then 
\[  \dim \Hom_{\U_{n-1}(\mathbb{F}_q)}(\pi_1, \pi_2) \ne 0 \] 
if and only if the cuspidal supports of the base change representations
 $\pi'_1$ and $\pi'_2$ are disjoint, in which case the Hom space has dimension 1.
 \end{thm}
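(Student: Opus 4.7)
The plan is to read off the theorem directly from the parity identity
$$2 \cdot \langle\chi'_1,\chi'_2\rangle _{\GL_{n-1}({\Bbb F}_{q^2}) \rtimes \langle F\rangle } =
\langle \chi'_1,\chi'_2\rangle _{\GL_{n-1}({\Bbb F}_{q^2})} +
\langle\chi_1,\chi_2\rangle _{\U_{n-1}({\Bbb F}_{q})}$$
established just before the theorem statement, combining it with Theorem \ref{T:GL-finite} and Proposition \ref{P:AGRS-finite}. The three inputs I will need are: (a) the LHS is manifestly an even integer; (b) the inner product on $\GL_{n-1}(\F_{q^2})$ is computable by Theorem \ref{T:GL-finite} and takes the value $1$ when the two cuspidal supports are disjoint and a positive \emph{even} integer (in fact $2^d$ with $d \geq 1$) otherwise; (c) the inner product on $\U_{n-1}(\F_q)$ lies in $\{0,1\}$ by Proposition \ref{P:AGRS-finite}.

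First I would check that the hypotheses of Theorem \ref{T:GL-finite} apply to the base changes $\pi_1'$ and $\pi_2'$. Since $\pi_2 = R(T',\theta')$ is an irreducible cuspidal Deligne--Lusztig representation of $\U_{n-1}(\F_q)$, its regular parameter forces $T'$ to be anisotropic and the characters appearing in $\theta'_2$ to be pairwise distinct; hence $\pi_2'$ is an irreducible principal series of $\GL_{n-1}(\F_{q^2})$ whose cuspidal support is a multi-set of distinct characters. Irreducibility of $\pi_1' = R(T_2, \theta_2)$ comes directly from the lemma of [DL, 5.16] recalled above, so $\pi_1'$ is of the form $\tau_1 \times \cdots \times \tau_r$ with the $\tau_i$ pairwise distinct cuspidals. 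This puts us in the ``distinct $\pi_i$'s, distinct $\mu_j$'s'' case of Theorem \ref{T:GL-finite}, which gives
$$\langle \chi'_1,\chi'_2\rangle _{\GL_{n-1}({\Bbb F}_{q^2})} = 2^d, \qquad d = \#\bigl(\{\tau_i\}\cap \{\mu_j\}\bigr).$$

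With these inputs in place, the theorem falls out of a simple parity check on the identity. Writing $L$ for the LHS (an even integer) and $a = \langle \chi_1,\chi_2\rangle_{\U_{n-1}(\F_q)} \in \{0,1\}$, the identity reads $L = 2^d + a$.

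\textbf{Case 1:} If the cuspidal supports of $\pi_1'$ and $\pi_2'$ are disjoint, then $d = 0$ and $L = 1+a \in \{1,2\}$; since $L$ is even, $a = 1$.

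\textbf{Case 2:} If the cuspidal supports are not disjoint, then $d \geq 1$, so $2^d$ is even and $L = 2^d + a$ is even, forcing $a = 0$.

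This establishes both directions of the ``if and only if'', together with the multiplicity-one statement in the positive case. The only real content beyond the identity and the two cited propositions lies in justifying step (a)(b) above: namely that quadratic base change of an irreducible Deligne--Lusztig representation of a unitary group lands in an irreducible parabolically induced representation of the corresponding general linear group whose cuspidal support consists of \emph{distinct} cuspidals. This is the only point where something needs to be checked carefully; I expect it to be routine given the regularity built into $\theta, \theta'$ (a Deligne--Lusztig character $R(T,\theta)$ is irreducible precisely in this regular regime) together with the compatibility of Shintani descent with parabolic induction and cuspidal support.
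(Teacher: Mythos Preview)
Your proof is correct and follows essentially the same approach as the paper: the argument is precisely the parity analysis of the Shintani identity, using Theorem~\ref{T:GL-finite} to evaluate the $\GL_{n-1}(\F_{q^2})$ term and Proposition~\ref{P:AGRS-finite} to bound the $\U_{n-1}(\F_q)$ term by $1$. Your additional care in noting that irreducibility of $\pi_1'$ (from the Deligne--Lusztig lemma) forces its cuspidal support to consist of distinct cuspidals, so that the $2^d$ formula applies, is exactly the justification the paper leaves implicit when it asserts that $\pi_1'$ and $\pi_2'$ are irreducible full principal series.
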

\vskip 5pt

In particular, this theorem completes the proof of Theorem \ref{T:depth-zero-sc}.
Indeed, in the setting of Theorem \ref{T:depth-zero-sc}, we need to show that the distinguished representation $\pi_{\chi} =\pi_1 \times \pi_2$ of $\U(W) \times \U(W_0)$ satisfies
\[  \Hom_H(\pi_{\chi}, \nu) \ne 0. \]
By the argument in the proof of Proposition \ref{P:AGRS-finite}, it is sufficient to show that the representation 
\[  R(\alpha) \otimes R(\beta) \quad \text{ of $\U_{n-p}(\mathbb{F}_q) \times \U_{m-p}(\mathbb{F}_q)$} \] 
satisfies
 \[  \Hom_{H(\mathbb{F}_q)}( R(\alpha) \otimes R(\beta) , \nu) \ne 0. \]
The desired non-vanishing then follows from Proposition \ref{P:AGRS-finite2}(i) and the above theorem, using the fact that the quadratic base change of $R(\alpha)$ and $R(\beta)$ have disjoint cuspidal support.

\vskip 5pt


\vskip 15pt

\section{Langlands-Vogan packets for small unitary groups} \label{S:LV}

The rest of this paper is devoted to verifying [GGP, Conjecture 16.3] or its variant [GGP, Conjecture 20.1] in various low rank examples in the unitary and symplectic cases. In this section, we explicate the Langlands-Vogan parameterization of irreducible representations of $\U(V)$ where $V$ is a hermitian (or skew-hermitian) space over $k$ of dimension $\leq 3$.
\vskip 5pt

When $\dim_k V =1$, the group $\U(V)$ is naturally isomorphic to the subgroup $k^1$ of norm one elements in $k^{\times}$, via its scalar action on $V$. The map
\[  x \mapsto x/x^{\sigma} \]
gives an isomorphism of $k^{\times}/k_0^{\times}$ with $\U(V)$. 
The only other pure inner form of $\U(V)$ is the group $\U(V')$ where $V'$ is obtained from $V$ by scaling the hermitian form on $V$ by an element in $k_0^{\times} \smallsetminus \mathbb{N}k^{\times}$. 
\vskip 5pt

In this case, an $L$-parameter for $\U(V)$ is a 1-dimensional conjugate-orthogonal representation $M$ of $WD(k)$, which corresponds via local class field theory to a character of 
 $k^{\times}/k_0^{\times}$, and hence to characters $\chi_M$ of  $\U(V)$ and $\chi'_M$ of $\U(V')$. The Vogan packet associated to $M$ is then 
 \[  \Pi_M = \{  \chi_M, \chi_M'\}. \]
 The component group $A_M$ is $\Z/2\Z$ and the trivial character of $A_M$ corresponds to the character $\chi_M$ of $\U(V)$.
 \vskip 10pt

 Now consider the case when $\dim V  =2$. We take $V$ to be the split hermitian space, and denote the other rank 2 hermitian space (which is anisotropic) by $V'$. In this case, the groups $\U(V)$ and $\U(V')$ are closely related to the group $\GL_2$ and its inner form 
 $D^{\times}$, where $D$ is the unique quaternion division algebra over $k_0$. 
 
 \vskip 5pt
 
 More precisely, given a quaternion algebra $B$ over $k_0$ (possibly split), we fix an embedding 
\[  k \hookrightarrow B \]
of algebras over $k_0$  and regard $B$ as a 2-dimensional vector space over $k$ via left multiplication. All such embeddings of $k$ into $B$ are conjugate under $\Aut_{k_0}(B)$ by the Skolem-Noether theorem. There is an element $b \in B$ (of trace zero) which normalizes $k$ and whose conjugation action on $k$ is the involution $\sigma$; moreover, all other such elements are of the form $\lambda \cdot b$ for $\lambda \in k$.  We thus have a decomposition 
\[  B =  k \cdot 1 + k \cdot b. \]
Define a nondegenerate hermitian form on $B$ by 
\[  \langle x,y \rangle = \text{projection of $x \cdot \overline{y}$ onto $k \cdot 1$}, \]
where $y \mapsto \overline{y}$ is the canonical involution on $B$;
 let $V_B$ be the associated hermitian space.
If $B$ is split, then $V_B$ is the split hermitian space $V$, whereas if $B$ is the quaternion division algebra $D$ over $k_0$, then $V_B$ is the anisotropic hermitian space $V'$. 

\vskip 10pt

The associated unitary similitude group is given by
$$\GU(V_B) \cong (B^{\times} \times k^{\times})/\Delta k_0^{\times} $$
with an element $(b, t) \in B^{\times} \times k^{\times}$ acting on $B$ by
\[  (b,t)(x) = t x b^{-1}. \]
The similitude character is given by
\[  (b, t) \mapsto \mathbb{N}t \cdot \mathbb{N}b^{-1}, \]
so that
\[  \U(V_B) = \{ (b,t) \in \GU(V_B): \mathbb{N}b = \mathbb{N} t\}. \]
Observe that $\U(V_B)$ is a subgroup of
\[  \GU^+(V_B) = ((B^{\times})^+ \times k^{\times})/ \Delta k_0^{\times}, \]
where 
\[  (B^{\times})^+ = \{ b \in B^{\times}: \mathbb{N} b \in \mathbb{N} k^{\times} \}. \]
Indeed, it is easy to see that
\[  \GU^+(V_B)  = \U(V_B) \cdot Z_{\GU(V_B)}, \]
where
\[  Z_{\GU(V_B)} = (k_0^{\times} \times k^{\times})/\Delta k_0^{\times} \cong k^{\times} \]
is the center of $\GU(V_B)$.

 \vskip 5pt
For later purposes, we describe here a nondegenerate rank $1$ hermitian subspace of $V_B$.
Consider the  nondegenerate subspace 
\[  L_B = k \cdot b \hookrightarrow B \]
and observe that its orthogonal complement $L_B^{\perp} = k \cdot 1$ is isomorphic to  $\langle 1 \rangle$. 
The pointwise stabilizer of $L_B^{\perp}$ in $\U(B)$ is the diagonal subgroup 
\[ 
 \U(L_B) \cong k^{\times} / k_0^{\times} \stackrel{\Delta}\longrightarrow (B^{\times} \times k^{\times})/\Delta k_0^{\times}.  \]

\vskip 10pt
We now come to the representation theory of $\U(V_B)$. 
Observing that the $L$-packets of $\GU(V_B)$ are all singletons, we take
an $L$-packet of $\U(V_B)$ to be the set of irreducible constituents of the
restriction of an irreducible representation of $\GU(V_B)$ to $\U(V_B)$.
Since 
\[  \GU^+(V_B) = \U(V_B) \cdot Z_{\GU(V_B)}, \]
we see that the restriction of an irreducible representation of $\GU(V_B)$ to $\U(V_B)$ is completely determined by its restriction to $\GU^+(V_B)$.  In other words, from the representation theoretic point of view, we may work with $\GU^+(V_B)$ in place of $\U(V_B)$.
\vskip 10pt

More precisely, if $\tau \boxtimes \chi$ is an irreducible representation of 
\[ \GU(V_B) = (B^{\times} \times k^{\times})/\Delta k_0^{\times},\] 
 then its restriction to $\GU^+(V_B)$ is equal to
\[  \tau|_{(B^{\times})^+} \boxtimes \chi, \]
and it is known that $\tau|_{(B^{\times})^+}$ is either irreducible or is the sum of two inequivalent summands. Moreover, the latter holds if and only if $\tau \otimes \omega_{k/k_0} \cong \tau$, in which case we say that $\tau$ is dihedral with respect to $k/k_0$.
Then the $L$-packet of $\U(V_B)$ associated to $\tau$ is the set
\[ \Pi_{B,\tau, \chi} =  \{ (\tau^+ \boxtimes \chi)|_{\U(V_B)}: \text{$\tau^+$ is an irreducible summand of   $\tau|_{(B^{\times})^+}$} \}, \]
 which has cardinality 1 or 2. Observe that if $\mu$ is any character of $k_0^{\times}$, then 
 \[  \Pi_{B, \tau \otimes (\mu^{-1} \circ \det), \chi \cdot (\mu \circ \mathbb{N})} = \Pi_{B,\tau,\chi}. \]
 If $N$ is the $L$-parameter of $\tau$, we also write $\Pi_{B,N,\chi}$ for $\Pi_{B,\tau,\chi}$. 
  \vskip 10pt

To attach $L$-parameters to these packets, recall that
an $L$-parameter in this case is a two dimensional conjugate-symplectic representation $M$ of $WD(k)$. Now we note:
\vskip 5pt

\begin{prop} \label{P:conj-sym}
(i) Let $\tau \boxtimes \chi$ be an irreducible representation of $\GU(V)$, so that $\omega_{\tau} \cdot \chi|_{k_0^{\times}} =1$.  If $N$  is the $L$-parameter of $\tau$, then the representation
\[  M = N|_{WD(k)} \otimes  \chi \]
of $WD(k)$ is conjugate-symplectic.
\vskip 10pt

\noindent (ii) Conversely, any 2-dimensional conjugate-symplectic representation $M$ of $WD(k)$ arises in this way from an irreducible representation $\tau \boxtimes \chi$ of $\GU(V)$, which is well-defined up to twisting by $(\mu^{-1} \circ \det) \boxtimes \mu  \circ \mathbb{N}$ for some character $\mu$ of $k_0^{\times}$. 
\end{prop}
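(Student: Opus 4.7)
The plan is to prove (i) by direct computation from the definitions and to establish (ii) by inverting the construction using a Hilbert~90 type argument together with local Langlands for $\GL_2(k_0)$.

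For (i), let $N$ be the 2-dimensional $L$-parameter of $\tau$, so that $\det N = \omega_\tau$ as a character of $k_0^\times$. The compatibility $\omega_\tau \cdot \chi|_{k_0^\times} = 1$ unwinds to the identity $\chi \cdot \chi^\sigma = \omega_\tau^{-1}\circ \mathbb{N}$ as characters of $k^\times$. Combining this with the $\sigma$-invariance of $N|_{WD(k)}$ (which holds because $N$ extends to $WD(k_0)$) and the formula $N^\vee \cong N \otimes (\det N)^{-1}$ valid for any 2-dimensional representation, one obtains
\[
M^\sigma = N|_{WD(k)} \otimes \chi^\sigma \cong N|_{WD(k)} \otimes (\omega_\tau^{-1}\circ \mathbb{N}) \otimes \chi^{-1} \cong N|_{WD(k)}^\vee \otimes \chi^{-1} = M^\vee.
\]
The natural alternating pairing $N \otimes N \to \det N$ (given by $\Lambda^2$ in dimension two) induces a non-degenerate alternating pairing $M \otimes M^\sigma \to \mathbb{C}$, so $M$ is conjugate-symplectic.

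For (ii), given a 2-dimensional conjugate-symplectic $M$, the first step is to find a character $\chi$ of $k^\times$ satisfying $\chi^\sigma/\chi = (\det M)^{-1}$. This is precisely the condition for $M \otimes \chi^{-1}$ to be $\sigma$-invariant, hence to extend to a 2-dimensional representation $N$ of $WD(k_0)$. The conjugate-symplectic structure forces $\det M$ to be trivial on $k_0^\times$, and the alternating nature of the pairing further ensures that $\det M$ lies in the image of the Galois coboundary $\chi \mapsto \chi^\sigma/\chi$; such a $\chi$ then exists by a Hilbert~90 argument applied to the $\Gal(k/k_0)$-module structure on the character group of $k^\times$.

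Having chosen $\chi$, the representation $N$ is a 2-dimensional $L$-parameter and by local Langlands for $\GL_2(k_0)$ corresponds to an irreducible admissible representation $\tau$ of $B^\times = \GL_2(k_0)$. Restricting the identity $\det N \circ \mathbb{N} = \det M \cdot \chi^{-2}$ to $k_0^\times \subset k^\times$ yields $(\omega_\tau \cdot \chi|_{k_0^\times})^2 = \det M|_{k_0^\times} = 1$, so $\omega_\tau \cdot \chi|_{k_0^\times}$ is quadratic; replacing $N$ by $N \otimes \omega_{k/k_0}$ if necessary (the other possible extension of $M\otimes\chi^{-1}$ to $WD(k_0)$) then arranges $\omega_\tau \cdot \chi|_{k_0^\times} = 1$, so that $(\tau,\chi)$ genuinely defines a representation of $\GU(V)$. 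For uniqueness, any two admissible choices $\chi, \chi'$ differ by a $\sigma$-invariant character, which has the form $\mu \circ \mathbb{N}$ for some character $\mu$ of $k_0^\times$; the corresponding $N'$ then equals $N \otimes \mu^{-1}$, so $\tau' = \tau \otimes (\mu^{-1} \circ \det)$, which is exactly the stated equivalence.

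The main obstacle is the solvability of $\chi^\sigma/\chi = (\det M)^{-1}$: not every character of $k^\times$ trivial on $k_0^\times$ is of this form (the obstruction lies in a Tate cohomology group isomorphic to $\Z/2\Z$), so one must genuinely use the alternating symplectic structure on the conjugate-duality pairing, rather than only the condition $\det M|_{k_0^\times} = 1$, in order to place $\det M$ in the image of the coboundary.
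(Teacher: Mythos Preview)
Your argument for (i) has a gap at the key step. You exhibit a nondegenerate pairing $B: M \otimes M^\sigma \to \CC$ and call it ``alternating,'' but that word has no intrinsic meaning for a pairing between two different spaces; what must be checked is the sign condition $B(n,\rho_M(s^2)m) = -B(m,n)$ for $s \in WD(k_0)\smallsetminus WD(k)$. The paper carries out exactly this computation and finds that the sign is $-\chi(s^2)\det N(s)$, which equals $-1$ precisely because $\omega_\tau\cdot\chi|_{k_0^\times}=1$. Without that calculation you have only shown $M$ is conjugate-dual, not that the sign is $-1$; indeed the same construction with $\omega_\tau\cdot\chi|_{k_0^\times}=\omega_{k/k_0}$ would yield a conjugate-orthogonal $M$.

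In (ii) the step ``replacing $N$ by $N\otimes\omega_{k/k_0}$ if necessary'' does not do what you need. Twisting all of $N$ by $\omega_{k/k_0}$ multiplies $\det N$ by $\omega_{k/k_0}^2=1$, so $\omega_\tau\cdot\chi|_{k_0^\times}$ is unchanged. When $N$ is reducible there are in fact more than two extensions of $M\otimes\chi^{-1}$ (one may twist a single summand by $\omega_{k/k_0}$, which does change $\det N$); when $N$ is irreducible the two extensions $N$ and $N\otimes\omega_{k/k_0}$ have the same determinant and your adjustment is unavailable. The paper resolves this by a three-case analysis: for reducible $N$ one twists a summand; for irreducible $M$, Schur's lemma forces the unique conjugate-duality to have sign $-1$, and comparison with the explicit sign computed in (i) then forces $\det N\cdot\chi|_{k_0^\times}=1$ automatically; the mixed case ($M$ reducible, $N$ irreducible) is handled by direct computation of $\det N$. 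Your proof needs the analogue of the irreducible case, and that in turn rests on the sign computation you skipped in (i).

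Finally, your closing remark that ``not every character of $k^\times$ trivial on $k_0^\times$ is of the form $\chi^\sigma/\chi$'' is mistaken: by Hilbert~90 the map $x\mapsto x^\sigma/x$ identifies $k^\times/k_0^\times$ with $k^1$, so every such character is a coboundary. The genuine $\Z/2\Z$ obstruction lies one step earlier, distinguishing characters trivial on $\mathbb{N}k^\times$ (conjugate-dual) from those trivial on $k_0^\times$ (conjugate-orthogonal); but since $\det M$ is already conjugate-orthogonal that obstruction vanishes and no further use of the symplectic structure is needed to produce $\chi$.
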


\begin{proof}
(i) Let 
\[  (-, - ) : N \otimes N \longrightarrow \wedge^2 N = \det N \]
be the natural skew-symmetric $WD(k_0)$-equivariant form.
Also, for $s \in WD(k_0) \smallsetminus WD(k)$, there is a $WD(k)$-equivariant isomorphism
\[  N^s    \longrightarrow N \]
given by the action of $s^{-1}$ on $N$. 
By composition, we obtain a $WD(k)$-equivariant bilinear form
\[ 
 N \otimes N^s \stackrel {1 \otimes s^{-1}} \longrightarrow  N \otimes N 
\longrightarrow \det N.
 \]
 Twisting $N$ with $\chi$ and $N^s$ with $\chi^s$ gives a conjugate duality
 \[  B: M \otimes M^s \longrightarrow \det N \cdot \chi \cdot \chi^s  = 1, \]
 where the last equality follows from the fact that
 \[  \det N \cdot \chi|_{k_0^{\times}} = 1 \quad \text{on $\mathbb{N}k^{\times}$}. \]
To see that this conjugate duality has sign $-1$, we write $\rho_M$ for the action of 
$WD(k)$ on $M$ and $\rho_N$ for the action of $WD(k_0)$ on $N$ and compute:
\begin{align}
&B(n, \rho_M(s^2) m)\notag \\
 = &\chi(s^2) \cdot ( n, \rho_N( s)^{-1} \cdot \rho_N(s)^2 m ) \notag \\
= &\chi(s^2) \cdot (n, \rho_N(s) m) \notag \\
= &-\chi(s^2) \cdot  (\rho_N(s)m, n) \notag \\
= &-\chi(s^2) \cdot \det N(s) \cdot  (m, \rho_N(s)^{-1}n) \notag \\
= &-\chi(s^2) \cdot \det N(s) \cdot B(m,n). \notag 
\end{align} 
 But if $t \in k_0^{\times} \smallsetminus \mathbb{N}k^{\times}$, then
 \[ \chi(s^2) \cdot \det N(s) = \chi|_{k_0^{\times}}(t) \cdot \omega_{\tau}(t) = 1. \]
 This proves (i). 
 \vskip 5pt

\noindent (ii) Conversely, if $M$ is conjugate-symplectic, then
$\det M$ is conjugate-orthogonal and thus has the form $\chi/ \chi^{\sigma}$ for some character $\chi$ of $WD(k)$. Moreover, such a $\chi$ is well-determined up to a character of the form $\mu \circ \mathbb{N}$.  The representation $M \otimes \chi^{-1}$ is then $\sigma$-invariant and hence is the restriction to $WD(k)$ of a representation $N$ of $WD(k_0)$.  Such an $N$ is not unique, as one can choose to twist any irreducible summand of $N$ by $\omega_{k/k_0}$. In any case, 
we have 
\[  M = N|_{WD(k)} \otimes \chi,\] 
and 
\[  \det N \cdot \chi|_{k_0^{\times}} = 1 \quad \text{or} \quad \omega_{k/k_0}. \]
We need to show that $N$ can be chosen so that the first possibility holds, and that this choice of $N$ is unique up to twisting by $\omega_{k/k_0}$ (for $\chi$ fixed). For this, we consider various cases:
\vskip 5pt

\begin{enumerate}[(a)]
\item if $N$ is reducible,  then by twisting one of its irreducible summand (which is 1-dimensional) 
 by $\omega_{k/k_0}$ if necessary, we can ensure that $\det N \cdot \chi|_{k_0^{\times}} =1$. 
 For a fixed choice of $\chi$, the only other $N$ for which this holds is $N \otimes \omega_{k/k_0}$.
 
 \vskip 5pt
 \item if $M$ is irreducible, then Schur's lemma implies that any two conjugate dualities of $M$ are multiples of each other and thus must have sign $-1$ in our setting. 
 On the other hand, 
the construction in the proof of (i) gives a conjugate duality on 
 $M$ which has sign 
 \[  \begin{cases}
 +1, \text{  if  $\det N \cdot \chi|_{k_0^{\times}} = \omega_{k/k_0}$;} \\
 -1, \text{  if $ \det N \cdot \chi|_{k_0^{\times}} = 1$.} 
 \end{cases} \]
 Thus, $ \det N \cdot \chi|_{k_0^{\times}} = 1$ in this case.  

\vskip 5pt

\item if $M$ is reducible but $N$ is irreducible, then we have
\[  M = M_1 + M_2 \]
with $M_1$ and $M_2$ distinct conjugate-symplectic, and
\[  M_1 \cdot M_2 = \chi/\chi^{\sigma}. \]
Moreover,
\[  N = \text{ind}_{WD(k)}^{WD(k_0)} M_1 \chi^{-1} \]
so that
\[  \det N = \chi^{-1}|_{k_0^{\times}}. \]
In particular, $ \det N \cdot \chi|_{k_0^{\times}} = 1$ in this case as well.
\end{enumerate}
 Hence (ii) is proved.
\end{proof}
\vskip 10pt

In view of the above proposition, we set the $L$-parameter 
associated to the packet $\Pi_{B, \tau,\chi}$ to be the conjugate-symplectic representation
\[  M = N|_{WD(k)} \otimes  \chi, \]
with $N$ the $L$-parameter of $\tau$.
 Given a conjugate-symplectic $M$, with associated pair $(\tau,\chi)$ as in Proposition 
 \ref{P:conj-sym}(ii), the associated Vogan packet is
\[  \Pi_M = \bigcup_B \Pi_{B, N, \chi}, \]
where the union is taken over the two quaternion algebras over $k_0$. 
 \vskip 10pt
 
\noindent{\bf Remark:} It has been shown by Konno-Konno [KK] that  the above construction of $L$-parameters agrees with the one supplied by the theory of twisted endoscopy (i.e. base change to $\GL(2)$ over $k$), which has been achieved by Rogawski [Ro]  using the stable trace formula. 
 
\vskip 10pt

The following table lists the various possibilities of $M$, $\Pi_M$ and the component group $A_M$, depending on the type of $\tau$'s. 
\vskip 10pt

\begin{center}
\begin{tabular}{|c|c|c|c|}
\hline \\
$\tau$  & $M$ & $\Pi_M$ & $A_M$  \\
\hline
& & &  \\
non-dihedral  principal series
& $P + {^\sigma}P^{\vee}$, &  1 representation &  trivial  \\

(with respect to $k/k_0$)  & $P \ncong  {^\sigma}P^{\vee}$ & on $\U(V)$ &  \\
\hline 
& & & \\
non-dihedral discrete series 
& irreducible & 1 representation on $\U(V)$  & $\Z/2\Z$ \\

(with respect to $k/k_0$)  & conjugate-symplectic  & and 1 on $\U(V')$ &  \\ 
\hline 
& & & \\
dihedral principal series 
&  $2 \cdot M'$, & 2 representations  & $\Z/2\Z$ \\
(with respect to $k/k_0$)  &  $M'$ conjugate-symplectic & on $\U(V)$ &  \\
\hline 
& & & \\
dihedral discrete series 
& $M_1+M_2$, $M_1 \ncong M_2$ & 2 representations on $U(V)$ &  $\Z/2\Z \times \Z/2\Z$ \\ 
(with respect to $k/k_0$)  &  conjugate-symplectic & and 2 on $\U(V')$  &  \\
\hline
\end{tabular}
\end{center}
\vskip 10pt

\noindent If the conjugate-symplectic representation $M$ is of the last 
two types in the above table, we shall call $M$ dihedral with respect 
to $k/k_0$. If it is of the first two type, we shall call it 
non-dihedral with respect to $k/k_0$.
\vskip 10pt

From the above table, we see that $\# \Pi_M = \# A_M = \# \text{Irr}(A_M)$. 
To index the representations in $\Pi_M$ by $\text{Irr}(A_M)$, we need to fix  a generic character of $\U(V)$.
According to [GGP, Prop. 12.1(2)],  a generic character of $\U(V)$ is specified by giving a nontrivial additive character 
\[  \psi: k/k_0 \to \Sc^1. \]
Via the description of $\GU(V)$ in terms of $\GL_2(k_0)$ given above, this then corresponds to a generic character of $\GL_2(k_0)$, which is given by an additive  character 
\[  \psi^0 : k_0 \to \Sc^1. \]
To describe the precise relation between $\psi$ and $\psi^0$, we need to start with an explicit embedding $k \to M_2(k_0)$. We do this in a standard way,  by choosing a trace zero element $e$ of $k$, which gives  
\[  k = k_0 \cdot 1+ k_0 \cdot e \quad \text{and}\quad  \text{End}_{k_0}(k) \cong M_2(k_0).\]
The multiplication action of $k$ on itself then gives an embedding $k \hookrightarrow  M_2(k_0)$. 
Moreover, the action of $\sigma$ on $k$ gives rise to an element $b$ in $\text{End}_{k_0}(k)$, so that
\[  M_2(k_0) = k + k \cdot b. \] 
After some calculations with these explicit data, which we will omit here, one sees that  $\psi$ and $\psi^0$ are related by
\[  \psi(x) = \psi^0( {\rm Tr}(e^{-1} \cdot x)) \quad \text{for all $x \in k$.} \]
We stress again that the above relation depends crucially on the choice of the trace zero element $e$, though we will not need to make use of this relation in this paper.
\vskip 10pt

In any case, with $\psi: k/k_0 \to \Sc^1$ and hence $\psi^0: k_0 \to \Sc^1$ fixed, we note that
$\tau|_{\GL_2(k_0)^+}$ has a unique $\psi^0$-generic constituent and hence the Vogan packet 
$\Pi_M$ has a unique $\psi$-generic element. 
 We then decree that 
\vskip 5pt

\begin{enumerate}[(i)]
\item  the trivial character of $A_M$ corresponds to $\psi$-generic element in $\Pi_M$;
\vskip 5pt

\item a character of $A_M$ corresponds to a representation of $\U(V)$ if and only if it is trivial on the image of the central element $-1 \in {^L}\U(V)$. 
\end{enumerate}
\vskip 10pt

From the above table, we see that these requirements completely determine the bijection 
\[  \Pi_M \leftrightarrow \text{Irr}(A_M), \]
except in the last case, where $\tau$ is a dihedral (with respect to $k/k_0$) discrete series representation
of $\GL_2(k_0)$. In that case, if $\tau'$ denotes the Jacquet-Langlands lift of $\tau$ to $D^{\times}$, then we do not know how to label the two summands of $\tau'|_{(D^{\times})^+}$ using the two characters of $A_M$ which are nontrivial on the central $-1$. 
However, in \S \ref{S:endo-theta}, we shall resolve this issue when we describe an alternative construction of these Vogan packets using theta correspondence.

\vskip 10pt

 Finally, we consider the case when $\dim V = 3$. In this case, the only other pure inner form of $\U(V)$ is the group $\U(V')$ where $V'$ is the hermitian space obtained from $V$ via scaling 
 by an element of $k_0^{\times} \smallsetminus \mathbb{N}k^{\times}$. In this case, the Vogan packets have been defined by Rogawski [Ro] via base change to $\GL(3)$ over $k$ using the stable trace formula. 
 
 \vskip 5pt
 
 The $L$-parameters are conjugate-orthogonal representations $M$ of $WD(k)$ of dimension $3$. When $M$ is irreducible, the associated Vogan packet is said to be stable; it consists of a representation of $\U(V)$ and the same representation regarded as a representation of $\U(V')$. The component  group $A_M$ is $\Z/2\Z$ and we decree that the trivial character corresponds to the representation of $\U(V)$. 
 On the other hand, when $M$ is reducible, the associated Vogan packet is said to be endoscopic. In \S \ref{S:endo-theta}, we shall describe a construction of the endoscopic packets, and the labelling of their representations by $\text{Irr}(A_M)$, via the approach of theta correspondence.
 \vskip 15pt
 
 \section{Theta correspondence}  \label{S:theta}
 
 The goal of this section is to review the necessary background and framework for the theta correspondence for unitary groups. This is necessary for the construction of endoscopic Vogan packets
 of $\U(2)$ and $\U(3)$ which will be given in the following section.
 \vskip 10pt

  Let $V$ be a hermitian space and $W$ a skew-hermitian space over $k$. To consider the theta
 correspondence for the dual pair $\U(V) \times \U(W)$, one requires certain additional data:
 \vskip 5pt
 
 \begin{enumerate}[(i)]
 \item an additive character $\psi_0 : k_0 \to \Sc^1$; 
 \vskip 5pt
 
 \item a character $\mu: k^{\times} \to \CC^{\times}$ such that $\mu|_{k_0^{\times}} = \omega_{k/k_0}$.
 \end{enumerate}
 \vskip 5pt

 \vskip 5pt

 To elaborate,  the tensor product $\text{Res}_{k/k_0} (V \otimes_k W)$ has a natural symplectic form defined by 
 \[  \langle v_1 \otimes w_1,  v_2 \otimes w_2 \rangle =  \text{Tr}_{k/k_0}(\langle v_1,v_2 \rangle_V \cdot \langle w_1, w_2 \rangle_W). \]
 Note that many authors (for example [HKS]) include a factor $1/2$ on the right hand side, but we shall not follow this convention here. 
 In any case,  there is a natural map
 \[ i:  \U(V) \times \U(W) \longrightarrow \Sp(V \otimes W/k_0). \]
 One has the metaplectic $\Sc^1$-cover $\text{Mp}(V \otimes W)$ of $\Sp(V \otimes W)$, and the character $\psi_0$ (together with the form $\langle-,-\rangle$ on $V \otimes W$) determines a Weil representation $\omega_{\psi_0}$ of   
 $\text{Mp}(V \otimes W)$. 
 To obtain a representation of $\U(V) \times \U(W)$ from  $\omega_{\psi_0}$, however,  one needs to specify a splitting of the map $i$ to the metaplectic cover. This is quite subtle,  but was completely understood by Gelbart-Rogawski [GRO], Kudla [K] and Harris-Kudla-Sweet [HKS]; it requires the additional data above.  
 \vskip 10pt
 
 More precisely,  the data $(V,\psi_0, \mu)$ determines a splitting 
 \[  i_{V, \mu, \psi_0}: \U(W) \hookrightarrow {\rm Mp}(V \otimes W), \]
 whereas the data $(W,\psi_0,\mu)$ determines a splitting
 \[  i_{W,\mu,\psi_0} : \U(V) \hookrightarrow {\rm Mp}(V \otimes W) \]
 whose image commutes with that of $i_{V,\mu,\psi_0}$.
 In [HKS], the above splittings can be defined for the choice  of any pair of characters $(\chi, \chi')$ of $k^{\times}$ satisfying
 \[  \chi|_{k_0^{\times}} = \omega_{k/k_0}^{\dim V} \quad \text{and} \quad \chi'|_{k_0^{\times}} = \omega_{k/k_0}^{\dim W}. \]
In their terminology, our splittings are relative to the pair of characters
\[  \chi = \mu^{\dim V} \quad \text{and} \quad \chi' = \mu^{\dim W}. \]  
In particular, by [HKS, Corollary A.8],  a property of this splitting is that the images of the centers of $\U(V)$ and $\U(W)$ are identified, so that the theta correspondence we consider here preserves central characters.  
 \vskip 5pt
 
 Using the above splittings, one obtains a Weil representation
 \[  \omega_{\psi_0, \mu} = \omega_{\psi_0} \circ (i_{W,\mu, \psi_0, \delta} \times i_{V, \mu, \psi_0}) \]
 of $\U(V) \times \U(W)$.   
  Moreover, the Weil representation $\omega_{\psi_0,\mu}$ depends only on the orbit of $\psi_0$ under $\mathbb{N}k^{\times}$.
 Thus, given an irreducible  representation $\pi$ of $\U(W)$, we have its big and small theta lift 
 $\Theta_{\psi_0, \mu}(\pi)$ and $\theta_{\psi_0,\mu}(\pi)$ on $\U(V)$.
 \vskip 10pt
 
 It would appear that, by restricting $(\chi_1, \chi_2)$ (as in [HKS]) to have the special form taken here, we are losing one degree of freedom. However, this lost degree of freedom can be regained by allowing twisting of the theta lifts by 1-dimensional characters of $\U(V)$, i.e. if we consider 
 $\theta_{\psi_0, \mu}(\pi) \otimes (\chi \circ \det)$ as well.  
 \vskip 10pt

 It is also useful to consider the theta correspondence for similitude groups. Let 
 \[  R  \subset \GU(V) \times \GU(W) \]
 be the subgroup consisting of elements $(g,h)$ such that $\text{sim}(g) \cdot \text{sim}(h) = 1$. 
 Then the Weil representation $\omega_{\psi_0,\mu}$ has a natural extension to $R$.
 Now observe that 
 \[  R \subset \GU^+(V) \times \GU^+(W) \]
 where $\GU^+(V)$ consists of those elements $g \in \GU(V)$ such that $\text{sim}(g)$ lies in the image of the similitude map of $\GU(W)$, and analogously for $\GU^+(W)$. Then   
 one may consider the induced representation
 \[  \Omega_{\psi_0,\mu} = {\rm ind}_R^{\GU^+(V) \times \GU^+(W)} \omega_{\psi,\mu} \]
 of $\GU^+(V) \times \GU^+(W)$, which depends only on the orbit of $\psi_0$ under $\mathbb{N}k^{\times}$ (and may even be independent of $\psi_0$ in some cases). We can now consider
 the theta correspondence for $\GU^+(V) \times \GU^+(W)$ associated to $\Omega_{\psi_0,\mu}$. In particular, for a representation $\pi$ of $\GU^+(W)$, we have its big and small theta lifts $\Theta_{\psi,\mu}(\pi)$ and $\theta_{\psi, \mu}(\pi)$ on $\GU^+(V)$. 
 \vskip 10pt

In this paper, we will be considering the theta correspondence for $\U(V) \times \U(W)$ with 
$|\dim V - \dim W| \leq 1$. In this case, there are some rather precise conjectures about the behavior of the theta correspondence in the literature (see for example [HKS, \S 7] and [P5]). We formulate these as the following working hypothesis.
 \vskip 15pt
 
 \noindent{\underline{\bf  Working hypothesis}}: Let $V$ be a hermitian space and let $W$ be a  skew-hermitian space, and consider the theta correspondence for $\U(V) \times \U(W)$ relative to the data $(\psi_0,\mu)$. For an irreducible representation $\pi$ of
$\U(V)$, let $\theta_{\psi_0,\mu}(\pi)$ denote the (small) theta lift of $\pi$ to $\U(W)$.
\vskip 10pt

\begin{enumerate}[(a)]
\item If $\dim V = \dim W$, then the Langlands parameters of $\pi$ and $\theta_{\psi_0,\mu}(\pi)$ are the same (if the latter is nonzero). For a given $L$-parameter $M$, the theta correspondence induces a permutation of the Vogan packet $\Pi_M$ to itself. 
This bijection is given by translation by a character of the component group $A_M$,  as given in [P5].
\vskip 10pt

\item  If $\dim V= \dim W -1$, then the  Langlands parameters $M$ of $\pi$ and
$N$ of $\theta_{\psi_0,\mu}(\pi)$ are related to each other by:
\[
N = \mu^{-1} M + \mu^{\dim V}. \]
The theta correspondence relative to $(\psi_0, \mu)$ gives an injection
\[  \theta_{\psi_0,\mu, V, W}: \Pi_{V,M} \hookrightarrow  \Pi_{W,N}. \]
This injection can be naturally described in terms of the characters of the component groups of $M$ and $N$ as follows. Assume for simplicity that $\mu^{\dim V}$ does not occur in $\mu^{-1}M$, so that $A_N = \Z/2\Z \times A_M$. For an appropriately normalized Langlands-Vogan parametrization, the above injection is described by the natural map
\[  \text{Irr}(A_M) \longrightarrow \text{Irr}(A_N) = \{ \pm 1\} \times \text{Irr}(A_M)  \]
given by
\[  \rho \mapsto (\epsilon, \rho)  \]
where the sign $\epsilon$ is completely determined by $\rho$ and the space $W$.
\vskip 5pt

Moreover, as $V$ and $W$ vary over all hermitian and skew-hermitian spaces of the specified dimensions, one has 
\[  \Pi_N = \bigcup_{V, W} \theta_{\psi_0,\mu,V, W}(\Pi_{V,M}), \]
where the union is disjoint and we ignore the theta lifts which are zero.  The disjointness of the union is in fact a consequence of the main result of [HKS] on theta dichotomy.   
 \end{enumerate}
 \vskip 15pt
 
 The above working hypothesis can be made more precise, especially its relation with the Langlands-Vogan parameterization.  In the following, we shall consider the low rank cases, with 
$\dim V \leq 2$ and $\dim W  \leq 3$.
In these cases, we shall verify the above working hypothesis in its precise form. We note that these low rank cases are the  only ones in which the Langlands-Vogan parameterization is fully understood for $\U(V)$ and $\U(W)$.
\vskip 5pt

For example, statement (a) for $\dim V =1$ is a result of Moen [Mo], Rogawski [Ro2] and Harris-Kudla-Sweet [HKS] (see Theorem \ref{T:HKS} below), whereas the case when $\dim V=2$ is verified in Theorem \ref{T:theta-U2} below. On the other hand, statement (b) for $\dim V = 1$ is easy to check, and the case of  $\dim V = 2$ is due to Gelbart-Rogawski-Soudry [GRS].
 
 \vskip 15pt
 
 \section{Endoscopic packets and theta correspondence} \label{S:endo-theta}
 
 The goal of this section is to describe an alternative construction of the endoscopic packets of the unitary group $\U(V)$, via theta correspondence,  when $\dim V = 2$ or $3$.
 We shall rely heavily on the framework and notation of the previous two sections.
 \vskip 10pt

Our first case of interest is the theta correspondence for a skew-hermitian space $W$ and a hermitian space $V$ with
\[  \dim W = 1 \quad \text{and} \quad \dim V = 2. \]
 We shall use the associated theta correspondence to construct certain Vogan packets on $\U(V)$. 
We shall fix once and for all a trace zero element $\delta \in k^{\times}$, so that
\[  \delta^{\sigma} = - \delta. \]

\vskip 5pt
Recall that
in \S \ref{S:LV}, we have given a construction of the rank $2$ hermitian spaces $V_B$ in terms of quaternion algebras $B$ over $k_0$.  Suppose that
 \[  M = M_1 + M_2 \]
 is a 2-dimensional conjugate-symplectic representation of $WD(k)$, with $M_i$ conjugate-symplectic (but not necessarily distinct).  As we explained in the previous section, such an $M$ gives rise to a Vogan packet $\Pi_M$ of $\U(V_B)$. If we fix an additive character 
 \[  \psi: k/k_0 \longrightarrow \Sc^1 \]
 then there should be an associated bijection
 \[  J(\psi): \Pi_M \longleftrightarrow \text{Irr}(A_M). \]
 It is the Vogan packet $\Pi_M$, together with the bijection $J(\psi)$, that we would like to construct using theta correspondence.
 In fact, since the Vogan packets on $\U(V_B)$ are defined by
 restriction from $\GU(V_B)$, it will be better to consider the theta correspondence for the similitude groups $\GU(W) \times \GU^+(V_B) $, with
 \[  \GU(W) \cong k^{\times}  \quad \text{and} \quad \GU^+(V_B) =((B^{\times})^+ \times k^{\times})/k_0^{\times}. \]

\vskip 5pt

To set up the theta correspondence, we need to fix the data $\psi_0$ and $\mu$. 
Since we have fixed the trace zero element $\delta \in k$, there is a unique
additive character $\psi_0$ of $k_0$ such that
\[  \psi(x) = \psi_0( \frac{1}{2} \cdot {\rm Tr}(\delta x)). \]
It follows that for any trace zero element $x \in k$,
\[  \psi(x) = \psi_0(\delta x). \]
In the rest of the paper, we shall assume that $\psi$ and $\psi_0$ are related as above, via the element $\delta$. 
\vskip 10pt

We set
\[  W = \text{the rank $1$ skew-hermitian space with discriminant $\delta$,} \]
and let $W'$ be the other rank $1$ skew-hermitian space. For any $a \in k_0^{\times}$, we let $W_a$ denote the rank $1$ skew-hermitian space obtained from $W$ by scaling by $a$. 
Finally, with $M = M_1 + M_2$ as above, we set
  \[   \mu = M_1, \]
  and let $\chi$ be any character of $k^{\times}$ such that
  \[  \chi/ \chi^{\sigma} = M_1 \cdot M_2. \]
  This is possible since $M_1 \cdot M_2$ is a character of $k^{\times}/ k_0^{\times}$. The choice of $\chi$ is not unique but any two choices differ by a character of $k^{\times}$ which is $\sigma$-invariant, or equivalently that factors through the norm map to $k_0^{\times}$.
In any case, we have
\[ M = \mu + \chi/\chi^{\sigma} \cdot \mu^{-1}, \]
and the packet $\Pi_M$ is obtained by the restriction of $\tau \boxtimes \chi$, where $\tau$ is the 
representation of $B^{\times}$ with $L$-parameter
 \[  N = \text{Ind}_{WD(k)}^{WD(k_0)} \mu\chi^{-1}. \]

\vskip 10pt

Now we may consider the theta correspondence associated to 
 the Weil representation $\Omega_{\psi_0, \mu}$ of $\GU(W_a) \times \GU^+(V_B)$. 
 Regarding $\chi$ as a character of $\GU(W_a)$, we have the theta lift
  \[  \Theta_{W_a,V_B, \psi_0, \mu}(\chi) = \theta_{W_a,V_B, \psi_0, \mu}(\chi) \] 
on $\GU^+(V_B)$.  
With $B^{\times} = \GL_2(k_0)$, the character $\psi$ determines a generic character of $\GU^+(V_B)$. We  let  $\tau^+$ be the constituent of $\tau|_{\GL_2(k_0)^+}$ such that the representation $\tau^+ \boxtimes \chi$ of $\GU^+(V_B)$ is  $\psi$-generic, and let $\tau^-$ denote the other constituent. We also let $\tau'$ be the Jacquet-Langlands lift of $\tau$ to $D^{\times}$, if it exists.
 
 \vskip 5pt

 With these notations, we have: 

  \vskip 5pt

  \begin{prop}  \label{P:endo-U2}
If $B$ is split, so that $V_B = V$, then
 \[ \begin{cases}
  \theta_{\psi_0, \mu, W, V} (\chi) = \tau^+  \boxtimes \chi,  \\
   \theta_{\psi_0, \mu, W', V} (\chi) = \tau^-  \boxtimes \chi. 
   \end{cases} \]
 If $B$ is non-split, so that $V_B = V'$, then
 \[  \theta_{\psi_0,\mu, W, V'} (\chi) + \theta_{\psi,\mu,W', V'} (\chi)= \tau' \boxtimes \chi, \]
 where the RHS is interpreted as $0$ if $\tau'$ does not exist.
 In particular, upon restriction to $\U(V)$ or $\U(V')$, the set
  \[  \{  \theta_{\psi_0,\mu,V, W}(\chi), \theta_{\psi_0,\mu,V, W}(\chi), \theta_{\psi_0,\mu,V, W'}(\chi), \theta_{\psi_0,\mu,V, W'}(\chi) \} \]
 is the Vogan packet $\Pi_M$ associated to the $L$-parameter 
 \[ M= M_1 + M_2 =\mu + \mu^{-1} \chi/\chi^{\sigma}. \] 
 
 \end{prop}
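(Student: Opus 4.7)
The plan is to compute the theta lift $\theta_{\psi_0,\mu,W_a,V_B}(\chi)$ in an explicit Schrödinger model obtained from the identification of $V_B \otimes_k W$ with $B$. Concretely, if $w_0$ is a generator of the rank one skew-hermitian space $W$, then $v \otimes w_0 \mapsto v$ identifies $V_B \otimes_k W$ with $V_B \cong B$ as a $k_0$-vector space. Under the choice of $\delta$ relating $\psi$ and $\psi_0$, the resulting symplectic form on $B$ is a multiple of the reduced trace form, and the two groups act by $\GU(W) = k^\times$ acting by right multiplication through the fixed embedding $k \hookrightarrow B$, while $\GU^+(V_B) = ((B^\times)^+\times k^\times)/\Delta k_0^\times$ acts by $(b,t)\cdot x = txb^{-1}$. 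The splitting of the metaplectic cover determined by $(\psi_0,\mu)$ in the sense of [HKS] is then made explicit on this model.

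Next I would realize the Weil representation $\Omega_{\psi_0,\mu}$ on functions on a Lagrangian $L \subset B$; when $B$ is split a natural choice is $L = k\cdot 1$, and when $B$ is non-split one works with a mixed model adapted to the division algebra. Computing the $\chi$-coinvariants for the action of $\GU(W)=k^\times$ on this model realizes $\Theta_{\psi_0,\mu,W,V_B}(\chi)$ as the $\GU^+(V_B)$-module obtained by inducing (parabolically when $B$ is split, compactly when $B$ is non-split) a character of the Levi subgroup $(k^\times \times k^\times)/\Delta k_0^\times$ that is built from $\mu\chi^{-1}$ on one factor and $\chi$ on the center. By Proposition \ref{P:conj-sym} and the discussion of the $L$-parameter attached to $\Pi_{B,\tau,\chi}$, this representation has $L$-parameter $N|_{WD(k)} \otimes \chi = \mu + \mu^{-1}\chi/\chi^\sigma = M$, matching the claim about parameters, while its restriction to $\GU^+(V_B)$ recovers exactly $\tau^+\boxtimes\chi$ (resp.\ $\tau^-\boxtimes\chi$) when $B$ is split, and a summand of $\tau'\boxtimes\chi$ when $B$ is a division algebra.

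To obtain the decomposition statement for $B$ split I would sum the two theta lifts from $W$ and $W'$. Since $W$ and $W'$ differ by scaling by a non-norm element of $k_0^\times$, the two associated Weil representations are exchanged by the outer automorphism of $\GU^+(V_B)$ coming from $\GU(V_B)$, and hence the two theta lifts are interchanged by this outer action; their sum is therefore a $\GU(V_B)$-stable representation with parameter $M$, which by the parametrization recovers the full restriction of $\tau\boxtimes\chi$, i.e.\ $\tau^+\boxtimes\chi + \tau^-\boxtimes\chi$. The analogous argument for $B$ non-split yields $\tau'\boxtimes\chi$ when $\tau'$ exists; when $\tau'$ does not exist the Jacquet-Langlands obstruction forces both theta lifts to vanish, consistently with the stated formula.

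The main obstacle is matching the $\psi$-generic summand $\tau^+$ with the theta lift from the specific space $W$ of discriminant $\delta$, rather than from $W'$. This is not a formal consequence of the parameter calculation and has to be checked by following through the explicit splitting data: the relation $\psi(x) = \psi_0(\delta x)$ fixed at the beginning of the section ensures that the generic character of the upper triangular unipotent in $\GU^+(V_B)$ read off from the Schrödinger model on $L = k\cdot 1$ is precisely $\psi$, whence the lift from $W$ (and not $W'$) produces the $\psi$-generic summand. Once this normalization is pinned down, the identification $\theta_{\psi_0,\mu,W',V}(\chi) = \tau^-\boxtimes\chi$ follows by the $\GU(V)/\GU^+(V)$-twist argument above, and the description of the Vogan packet $\Pi_M$ as the union of the restrictions of these theta lifts to $\U(V)\cup\U(V')$ is immediate from the definitions in \S\ref{S:LV}.
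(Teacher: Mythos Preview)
The paper does not actually prove this proposition; it is stated without proof as a known result (the theta-lifting construction of dihedral $L$-packets of $\U(2)$ is classical, going back to Shalika--Tanaka and Jacquet--Langlands in the $\GL_2$ setting, and to Gelbart--Rogawski for $\U(2)$). So there is no proof in the paper to compare against, and I can only assess your proposal on its own merits.

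Your overall architecture is sound: realize the Weil representation on a concrete model via the identification $V_B\otimes_k W\cong B$, compute the $\chi$-coinvariants to obtain the big theta lift, identify its $L$-parameter as $M$, and then distinguish $\tau^+$ from $\tau^-$ by computing the Whittaker functional on the Schr\"odinger model and tracking the additive character through the relation $\psi(x)=\psi_0(\delta x)$. The outer-automorphism argument for swapping $W$ and $W'$ is also correct. The Whittaker computation is exactly the mechanism the paper itself invokes later, in the proof of Theorem~\ref{T:theta-U2}.

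There is, however, a genuine gap in your middle step. You claim that the $\chi$-coinvariants realize $\Theta(\chi)$ as a representation parabolically induced from a character of a Levi $(k^\times\times k^\times)/\Delta k_0^\times$. This cannot be right in general: the representation $\tau$ of $B^\times$ with parameter $N=\mathrm{Ind}_{WD(k)}^{WD(k_0)}(\mu\chi^{-1})$ is \emph{supercuspidal} whenever $\mu\chi^{-1}\neq(\mu\chi^{-1})^\sigma$, i.e.\ whenever $M_1\neq M_2$, and then no constituent of $\tau|_{(B^\times)^+}$ can be a subquotient of a principal series. What actually happens is that the Weil representation of $\GU^+(V_B)\times\GU(W)$, restricted to the $B^\times$-factor, is (after the identification you describe) the classical Weil representation of $\GL_2(k_0)$ on $\mathcal{S}(k)$ associated to the quadratic space $(k,\mathbb{N}_{k/k_0})$; its $\chi$-isotypic pieces for the $k^\times$-action are precisely the dihedral representations $\pi(\mu\chi^{-1})$, constructed directly rather than by parabolic induction. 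So you should replace the ``parabolic induction'' step by this direct identification of the theta lift with the Shalika--Tanaka/Jacquet--Langlands model of the dihedral representation; the parameter computation and the Whittaker argument then go through as you outline. Your description of the Levi as $(k^\times\times k^\times)/\Delta k_0^\times$ is also off: this is not the Levi of a Borel in $\GU(V_B)$, whose torus factor coming from $\GL_2(k_0)$ is $k_0^\times\times k_0^\times$, not $k^\times$.
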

\vskip 10pt

Using the above construction of endoscopic packets of $\U(V)$, we can define the bijection
\[  J(\psi): \Pi_M \longleftrightarrow \text{Irr}(A_M),\]
as follows. Consider the case when $M_1 \ne M_2$, so that $A_M = \Z/2\Z \times \Z/2\Z$; this is the only case where the bijection $\Pi_M \leftrightarrow \text{Irr}(A_M)$ has some ambiguity.  We set
\[  \begin{cases}
\pi^{++}  =   \theta_{\psi_0,\mu,V, W}(\chi)  \\
\pi^{--} =   \theta_{\psi_0,\mu,V, W'}(\chi) \\
\pi^{+-} =  \theta_{\psi_0,\mu,V', W'}(\chi) \\
\pi^{-+} = \theta_{\psi_0,\mu,V', W}(\chi). \end{cases} \]
In other words, the recipe for labelling is that  
\[  \pi^{\epsilon_1, \epsilon_2}  = \theta_{\psi, \mu, W_a, V_B}(\chi) \]
where
\[  \epsilon_1 \cdot \epsilon_2 = \epsilon(B) = \begin{cases} 
1 \text{ if $B$ is split;} \\
-1, \text{  if $B$ is not split,} \end{cases}  \]
and
\[  \epsilon_2 = 
\omega_{k/k_0}(a). \]
Equivalently, if $\eta$ is a a character of $A_M$, then 
\[  \pi_{\eta}   = \theta_{\psi, \mu, W_a, V_B}(\chi) \]
if and only if 
\[ \begin{cases}
 \eta(a_1) = \epsilon(B) \cdot \omega_{k/k_0}(a), \\
\eta(a_2) = \omega_{k/k_0}(a). 
\end{cases} \]
\vskip 10pt

 We leave it to the reader to verify that under this system of bijections $J(\psi)$, the various desiderata of the Vogan parameterization listed in [GGP, \S 9 and \S 10] are satisfied. In particular, 
 the trivial character of $A_M$ corresponds to the unique $\psi$-generic representation of the packet, and if $\psi'$ belongs to the other $\mathbb{N}k^{\times}$-orbit, then the unique $\psi'$-generic representation corresponds to the character
 \[ \eta_0 (a_i) = (-1)^{\dim M_i}. \]
 Indeed, when $M$ is irreducible, $\eta_0$ is trivial, whereas when $M = M_1+M_2$ is reducible, then $\eta_0$ is the character $(--)$ of $A_M = \Z/2\Z \times \Z/2\Z$. 
 \vskip 15pt
 
It will be useful to convert the above classification into the setting of rank $2$ skew-hermitian spaces. Using  the trace zero element $\delta$, let $W_{B,\delta}$ be the skew-hermitian space obtained from $V_B$ by scaling by $\delta$, and we shall frequently write $W_B$ for $W_{B,\delta}$. Then we have
 \[  \GU(W_B) = \GU(V_B) \] 
 as subsets of $\text{End}_k(B)$.  Moreover, the notions of $L$-parameters and $L$-packets are the same for $\U(V_B)$ and $\U(W_{B,\delta})$. The only difference lies in the data needed to specify a bijection of a Vogan packet with the set of characters of the component group. In the case of $V_B$, we used an additive character 
 \[ \psi: k/k_0 \longrightarrow \Sc^1, \]
 whereas for the case of $W_{B,\delta}$, one needs an additive character of $k_0$. 
 However, it is easy to check that if a representation $\pi$ of $\U(V_B)$ is generic with respect to $\psi$, then regarded as a representation of $\U(W_{B,\delta})$, $\pi$ is generic with respect to the character given by
 \[  \psi_{00}(y) =   \psi(\delta^{-1} y), \qquad \text{for $y \in k_0$,} \]
 or equivalently
 \[  \psi(x) = \psi_{00}(\delta x) = \psi_{00}(\frac{1}{2} {\rm Tr}(\delta x)) \quad \text{for trace zero $x \in k$.} \]
 In other words, the character $\psi_{00}$ is precisely the character $\psi_0$ which we have fixed, and the bijection
 \[  J(\psi): \Pi_M \longleftrightarrow \text{Irr}(A_M) \]
 for $\U(V_B)$ is the bijection $J(\psi_0)$ for $\U(W_{B,\delta})$.
 For a character $\eta$ of $A_M$, we then have
 \[  \pi_{\eta} =  \theta_{\psi_0, \mu, W_{B,\delta}, V_a}(\chi) \]
 where $\mu$ and $\chi$ are obtained from $M$ as before, $V_a$ is the rank $1$ hermitian space with discriminant $a$, and
 \[  \begin{cases}
 \eta(a_1) = \epsilon(B) \cdot \omega_{k/k_0}(a) \\
 \eta(a_2) = \omega_{k/k_0}(a). 
 \end{cases} \]

 \vskip 15pt
 
 Finally, we consider the endoscopic Vogan packets of $\U(V)$ when $\dim V = 3$. 
Hence, we fix a rank 3 hermitian space $V$ and let $V'$ denote the other rank $3$ hermitian space. More generally, for any $a \in k_0^{\times}$, we let $V_a$ denote the hermitian space obtained from $V$ by scaling the hermitian form by $a$. 

\vskip 10pt

Consider $L$-parameters of $\U(V)$ of the form
 \[  M = M_1 + M_2 \]
 where $M_i$ are conjugate-orthogonal representations of 
$WD(k)$ of dimension $i$.  
 Unless $M \cong 3 \cdot M_1$, we may further assume that $M_1$ is distinct from any irreducible constituent of $M_2$.  It was shown in [GRS] that the Vogan packet $\Pi_M$ can by constructed using theta correspondence from $\U(W_B)$, where $W_B$ is the rank $2$ skew-hermitian space introduced above, together with twisting by 1-dimensional characters of $\U(V)$.

\vskip 10pt
 To specify the data needed for theta correspondence, note that
  $M_1$ is a character of $k^{\times}/k_0^{\times}$ and thus 
 we can hope to find a conjugate-symplectic character $\mu$ of $k^{\times}$ such that
 \[  M_1 = \mu^2. \]
 This cannot always be achieved, but if we allow ourselves to replace $M$ by a twist, then it can certainly be done. Indeed, one would simply pick a conjugate-symplectic $\mu$ and twist $M$ by $\mu^2 M_1^{-1}$. Since the Langlands-Vogan parametrization  is  compatible with twisting, there is no loss of generality in assuming that $M_1$ has a square root $\mu$ which is conjugate-symplectic. 
 \vskip 10pt

Now set
 \[  N = M_2 \cdot \mu \]
 so that $N$ is conjugate-symplectic and is an $L$-parameter for $\U(W_B)$, and 
 \[  M = \mu^2 + N \cdot \mu^{-1}. \]
For the additive character $\psi_0$ of $k_0$, we can now consider the theta correspondence associated to the Weil representation $\Omega_{\psi_0, \mu, W_B, V_a}$.
 
 \vskip 10pt
 
 More precisely, let $\Pi_N$ be the Vogan packet associated to $N$, together with the bijection
 \[  J(\psi_0): \Pi_N \longleftrightarrow \text{Irr}(A_N)\]
associated to the additive character $\psi_0$.
Then for $\eta \in \text{Irr}(A_N)$, we may consider the theta lift
\[  \theta_{\psi_{0}, \mu, W_B, V_a}(\pi_\eta), \]
where $\pi_{\eta} \in \Pi_N$  is the representation of $\U(W_B)$ (this uniquely specifies $B$) indexed by $\eta$ under $J(\psi_0)$.
As the element $a$ varies over the two representatives of $k_0^{\times}/\mathbb{N}k^{\times}$, and the character $\eta$ varies over $\text{Irr}(A_N)$, we obtain a collection of $2 \cdot \# \Pi_N$ representations (possibly zero).  It was shown by Gelbart-Rogawski-Soudry [GRS] that this set of representations so obtained is the Vogan packet associated to the endoscopic parameter $\Pi_M$. 
\vskip 5pt

The following lemma, which was shown in [GRS],  addresses more precisely the issue of non-vanishing of these theta lifts.

\vskip 5pt

\begin{lemma}
Let $M = M_1+ M_2 = \mu^2 + N \cdot \mu^{-1}$ as above. If $M \ncong 3 M_1$, assume without loss of generality that $M_1$ is distinct from any irreducible constituent of $M_2$. 
\vskip 5pt

\noindent (i) If $M \ncong 3 M_1$,  then the representation 
 $\theta_{\psi_{0}, \mu, W_B, V_a}(\pi_\eta)$ is always nonzero.
 \vskip 10pt
 
 \noindent (ii) If $M = 3 M_1$, then $N = 2 \cdot \mu^3$ and $A_N \cong \Z/2\Z$, so that we may regard $\eta = \pm 1$, depending on whether $\eta$ is trivial or not.   The representation $\theta_{\psi_{0}, \mu, W_B, V_a}(\pi_\eta)$ is nonzero if and only if
 \[  \omega_{k/k_0}( \disc V_a)  = \eta. \]
 \vskip 10pt
 
 \noindent In each case above, the non-zero representations are mutually distinct.
  Moreover, the representation $\theta_{\psi_{0}, \mu, W_B, V_a}(\pi_\eta)$ is generic if and only if 
  $\pi_{\eta}$ is generic with respect to $\psi_{0, \disc(V_a)}$.
  \end{lemma}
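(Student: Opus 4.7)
My plan is to deduce the lemma from the main non-vanishing theorems of Gelbart-Rogawski-Soudry [GRS] for the dual pair $\U(2) \times \U(3)$, combined with the equal-rank theta dichotomy of Harris-Kudla-Sweet [HKS], by carefully tracking the data $(\psi_0, \mu, \delta)$ through the Vogan-theoretic normalizations fixed in Sections \ref{S:LV} and \ref{S:theta}. The working hypothesis (b) of Section \ref{S:theta} already pins down the Langlands parameter of each nonzero lift, so the content of the lemma is really a precise non-vanishing statement together with the genericity compatibility.

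For part (i), under the hypothesis that $M_1$ is distinct from each irreducible constituent of $M_2$, I would establish non-vanishing of each $\theta_{\psi_0, \mu, W_B, V_a}(\pi_\eta)$ via a Rallis tower argument. Embed $V_a$ into its Witt tower of hermitian spaces; the equal-rank theta lift of $\pi_\eta$ to a $2$-dimensional hermitian space, if non-zero, would by the working hypothesis carry Langlands parameter containing $\mu^2 = M_1$ as a summand of a parameter built from $M_2$, contradicting the hypothesis. Hence the equal-rank lift vanishes, and by the tower principle the $3$-dimensional lift is non-zero. Combined with the parameter identity of working hypothesis (b), this accounts for all $2 \cdot |\Pi_N|$ nonzero lifts as $\eta$ ranges over $\text{Irr}(A_N)$ and $a$ over $k_0^{\times}/\mathbb{N}k^{\times}$. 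In part (ii), the parameter $N = 2\mu^3$ is non-tempered and $\pi_\eta$ is, after untwisting by $\mu^3$, essentially a character of $\U(W_B)$. The theta lift of such a character can be evaluated directly in the Schr\"odinger model of $\omega_{\psi_0,\mu}$, and the vanishing/non-vanishing criterion $\omega_{k/k_0}(\disc V_a) = \eta$ reduces, through a see-saw with the dual pair $(\U(1),\U(1))$, to the equal-rank dichotomy of [HKS] combined with the root number calculation of Corollary \ref{C:2.5}.

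Distinctness of the nonzero lifts is forced by the working hypothesis (b) together with injectivity of the Vogan parameterization: two lifts with equal parameter $M$ must correspond to the same character in $\text{Irr}(A_M)$, so the map $(\eta, V_a) \mapsto \theta(\pi_\eta)$ is automatically injective on its non-zero image. For the genericity statement, I would perform a standard Jacquet module computation on the Schr\"odinger model of $\omega_{\psi_0,\mu}$ along the unipotent radical of a Borel of $\U(V_a)$; this produces a transfer of Whittaker functionals which shows that $\pi_\eta$ is $\psi_{0,b}$-generic if and only if its lift is $\psi_{0, b \cdot \disc(V_a)}$-generic, and specializing $b = 1$ yields the claim (the discriminant twist appears because the hermitian form on $V_a$ is obtained from $V$ by scaling). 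The main obstacle in the whole argument is the normalization bookkeeping in case (ii): one must check that the labelling of the two representations in $\Pi_N$ by $\eta = \pm 1$, inherited from the bijection $J(\psi_0)$ fixed in Section \ref{S:LV}, is exactly the one that produces the asymmetric vanishing $\omega_{k/k_0}(\disc V_a) = \eta$. Since neither member of $\Pi_N$ is generic, this forces one to track a see-saw sign through the non-generic corner of the packet.
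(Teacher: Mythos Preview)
The paper does not give a proof of this lemma; it simply records that it ``was shown in [GRS]''.  The arguments in [GRS] are largely global (theta lifts of cusp forms, Rallis inner product, comparison with the endoscopic classification of Rogawski), so your purely local sketch is attempting something genuinely different.  Unfortunately several of the local steps do not go through as written.

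First, in part~(i) your tower argument is mis-aimed.  You propose to control the lift to the $3$--dimensional hermitian space $V_a$ by the ``equal--rank'' lift to a $2$--dimensional hermitian space.  But $2$-- and $3$--dimensional hermitian spaces lie in Witt towers of different parity, so the Rallis tower principle does not link them.  The correct lower step in the tower of $V_a$ is a $1$--dimensional hermitian space $L$.  With the fixed splitting character $\mu$, working hypothesis~(b) shows that $\pi_\eta$ occurs already at level~$1$ exactly when the parameter $N$ contains $\mu$, i.e.\ when $1$ is a constituent of $M_2$.  This is \emph{not} the hypothesis of~(i), which only excludes $M_1=\mu^2$ from $M_2$.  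So even after the parity correction, a bare tower/conservation argument does not yield~(i) without further input; this is precisely where [GRS] brings in global methods.

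Second, in part~(ii) the packet $\Pi_N$ with $N=2\mu^3$ is the ``dihedral principal series'' row of the table in \S\ref{S:LV}: the underlying $\tau$ is an irreducible principal series of $\GL_2(k_0)$, and the two members of $\Pi_N$ are the two infinite--dimensional constituents of its restriction to $(B^\times)^+$.  Neither is a character, so ``untwisting by $\mu^3$'' does not reduce you to evaluating the Weil representation on a character, and the see-saw you describe does not collapse to a $\U(1)\times\U(1)$ problem in the way you suggest.  In the same vein, the appeal to Corollary~\ref{C:2.5} is misplaced: that corollary is specific to the unramified quadratic extension and plays no role for general $k/k_0$.

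Finally, your argument for distinctness and genericity leans on working hypothesis~(b) together with injectivity of the Vogan labelling, but in the $\U(2)\times\U(3)$ case that hypothesis is exactly what the lemma (via [GRS]) is meant to supply; invoking it here is circular.  The genericity transfer you sketch via a Jacquet--module computation on the Schr\"odinger model is the one piece that is genuinely local and could be carried out, but by itself it does not establish non-vanishing.
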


\vskip 10pt

We may now define a labeling of the elements in $\Pi_M$ by the irreducible characters of $A_M$.
\vskip 5pt

\begin{enumerate}[(i)]
\item If $M \ncong 3 M_1$, and $M_1$ does not occur in $M_2$, then 
\[  A_M  = A_{M_1} \times A_{M_2} = A_{M_1} \times A_N. \]
For a character $\chi = (\epsilon, \eta) \in \text{Irr}(A_{M_1}) \times \text{Irr}(A_N)$, we set
 \[  \pi^{\chi} = \pi^{\epsilon, \eta} = \theta_{\psi_0, \mu, W_B, V_a} (\pi_{\eta_V}) \]
 with
 \[  \epsilon \cdot \eta(-1) = \omega_{k/k_0}(a), \]
 and 
 \[  \eta_V = \begin{cases}
 \eta, \text{  if $\omega_{k/k_0}(\disc V)=1$; } \\
 \eta \cdot \eta_{N,0}, \text{  if $\omega_{k/k_0}(\disc V)= -1$,}
  \end{cases} \] 
 where $\eta_{N,0}$ is the character of $A_N$ which indexes the $\psi'$-generic element of $\Pi_N$.  
 More simply, when $\disc (V) = 1$, we have
 \[  \chi(a_1) = \omega_{k/k_0}(a) \cdot \eta(-1) = \omega_{k/k_0}(a) \cdot \epsilon(B) \]
 \and
 \[  \chi|_{A_{M_2}} = \eta. \]
In particular, for a character $\chi$ of $A_M= A_{M_1} \times A_N$, $\pi_{\chi}$ is a representation of $\U(V)$ if and only if $\chi(-1,-1) = 1$.
 \vskip 10pt
 
 \item If $M = 3 M_1= 3 \mu^2$, then 
 \[  A_M \cong A_N =  \Z/2\Z. \]
 For a character $\eta = \pm$ of $A_M$, we set
 \[ \pi^{\eta} = \theta_{\psi_0, \mu, W_B, V_{a}}(\pi_{\eta \cdot \omega_{k/k_0}(\disc V)}) \]
 with
 \[  \omega_{k/k_0}(a) =  \eta. \]

By part (ii) of the above lemma, this condition ensures that the theta lift above is nonzero.
 In particular, the trivial character of $A_M$ corresponds to a representation of $\U(V)$ whereas the nontrivial character corresponds to the same representation regarded on $\U(V')$.
 \vskip 10pt

\end{enumerate}

 Note that since $\dim V = 3$, there is only one orbit of generic characters for $\U(V)$, and hence 
the Vogan parameterization in this case is canonical. So it is instructive to observe that the above parameterization is independent of the choice of $\psi_0$. We leave this to the reader, as well as the verification that the above definition satisfies the desiderata of the Vogan parameterization listed in [GGP, \S 9 and \S 10].

 \section{Skew-hermitian case: $\U(1) \times \U(1)$} \label{S:U(1)xU(1)}
 
Having explicated the Langlands-Vogan parameterization of the unitary groups $\U(V)$ with $\dim V \leq 3$, we are now in a position to verify instances of [GGP, Conjecture 16.3]. 
\vskip 5pt

In this section, we consider the case of a pair of skew-hermitian spaces $W \subseteq V$, with $\dim W = \dim V =1$. Without loss of generality, we assume that
\[  \delta = \text{the discriminant of $V$}. \]
As in the previous section, 
we let  $W'  = V'$ be the other rank 1 skew-hermitian space, and more generally, for any $a \in k_0^{\times}$, we let $W_a$ be the skew-hermitian space obtained from $W$ by scaling by $a$.

 \vskip 5pt

To specify the restriction problem in this setting,
fix an additive character 
\[  \psi_0: k_0 \to \Sc^1, \]
 and a character $\mu$ of $k^{\times}$ with 
\[  \mu|_{k_0^{\times}} = \omega_{k/k_0}. \]
These determine a Weil representation $\omega_{W_a, \psi_0, \mu}$  of $\U(W_a)$.
 Also, fix two conjugate-orthogonal representations $M$ and $N$ of dimension $1$, which gives rise to characters $\alpha \times \beta$  of $\U(W_a) \times \U(W_a)$.  We are interested in determining
\[  \Hom_{\U(W_a)}( \alpha \cdot \beta , \omega_{W_a,\psi_0,\mu}).  \] This question has been resolved by Moen [Mo], Rogawski [Ro2] and Harris-Kudla-Sweet [HKS], and we state the result from [HKS, Corollary 8.5] as:
 \vskip 10pt
 
 \begin{thm} \label{T:HKS}
 For each $a \in k_0^{\times}$, let $W_a$ be the rank 1 hermitian space with discriminant $a \cdot \delta$, and
 for each $b \in k_0^{\times}$, let $V_b$ be the rank 1 hermitian space with discriminant $b$.
Given a character $\eta$ of $k^{\times}/k_0^{\times}$, which can be regarded as a character of $\U(W_a)$,  we have
 \[   \Hom_{\U(W_a)}( \eta, \omega_{W_a,V_b, \psi_0,\mu}) \ne 0 \Longleftrightarrow \epsilon(\eta \cdot \mu^{-1}, \psi_0({\rm Tr}(\delta  -))) = \omega_{k/k_0}(a \cdot b). \]
\end{thm}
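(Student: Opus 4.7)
The plan is to carry out an explicit Schrödinger-model computation of the Weil representation of $\U(W_a) \times \U(V_b)$, extract the $\eta$-isotypic component under $\U(W_a)$, and identify the resulting nonvanishing condition as the epsilon factor condition in the statement. Since $\dim W_a = \dim V_b = 1$, the tensor product $W_a \otimes_k V_b$ is a 1-dimensional $k$-space, so $\text{Res}_{k/k_0}(W_a \otimes V_b)$ is a 2-dimensional symplectic space over $k_0$, and $\U(W_a) \times \U(V_b)$ embeds as a (non-split) torus in $\Sp(W_a \otimes V_b/k_0) \cong \SL_2(k_0)$. The HKS splitting determined by $\mu$ and $\psi_0$ lifts this torus action to the metaplectic cover, and this is the action I would use throughout.

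First, I would fix an isotropic line $\ell \subset W_a \otimes V_b$ and realize $\omega_{\psi_0,\mu}$ on $\mathcal{S}(\ell) \cong \mathcal{S}(k_0)$. Since $\U(W_a)$ acts on $W_a \otimes V_b$ by $k$-linear scaling on the first factor (which, from the $k_0$-point of view, is a rotation mixing $\ell$ and its complementary Lagrangian), its action on $\omega_{\psi_0,\mu}$ is implemented by a $\mu$-twisted partial Fourier transform, whose explicit formula can be written down using the standard generators of $\SL_2(k_0)$ together with the Weil cocycle.

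Next, to compute the multiplicity of $\eta$ in $\omega_{\psi_0,\mu}|_{\U(W_a)}$, I would consider the projector
\[  P_\eta \phi = \int_{\U(W_a)} \eta(t)^{-1} \, \omega_{\psi_0,\mu}(t) \phi \, dt, \]
evaluated on Schwartz test functions $\phi$. The map $\phi \mapsto (P_\eta \phi)(0)$ unfolds to a Tate-type local zeta integral for the character $\eta\mu^{-1}$ against the additive character $\psi_0 \circ {\rm Tr}_{k/k_0}(\delta \cdot -)$; the twist by $\mu^{-1}$ is precisely the one forced on us by the HKS splitting, and the twist of $\psi_0$ by ${\rm Tr}(\delta \cdot -)$ comes from the relation $\psi(x) = \psi_0(\delta x)$ and the appearance of $\delta$ in the symplectic form. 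Applying Tate's local functional equation, this zeta integral is nonzero precisely when $\epsilon(\eta\mu^{-1}, \psi_0({\rm Tr}(\delta \cdot -)))$ takes a specific sign, giving both directions of the equivalence.

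Finally, the sign must be identified with $\omega_{k/k_0}(a \cdot b)$. The dependence on $a$ enters through the scaling of the skew-hermitian form on $W_a$, which rescales the symplectic form on $W_a \otimes V_b$ and hence changes the embedding $\U(W_a) \hookrightarrow \Sp$ by a dilation; similarly the $b$-dependence enters through $V_b$. Using the standard transformation formula for the Weil index under scaling of the additive character by an element of $k_0^\times$, the net sign contribution is exactly $\omega_{k/k_0}(a \cdot b)$. The main obstacle is precisely this bookkeeping of normalizations: the appearance of ${\rm Tr}(\delta \cdot -)$ rather than ${\rm Tr}$ alone, the precise twist by $\mu^{-1}$, and the correct sign $\omega_{k/k_0}(a \cdot b)$ all arise from simultaneously tracking the HKS splitting, the choice of trace-zero element $\delta$, and the scalings by $a$ and $b$. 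Once the correct integral is identified, the zeta-integral computation itself is classical, and the theorem follows.
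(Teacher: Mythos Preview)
The paper does not actually prove this theorem: it is quoted directly from Harris--Kudla--Sweet [HKS, Corollary~8.5], with earlier proofs due to Moen [Mo] and Rogawski [Ro2], and the only additional content in the paper is the remark reconciling the sign/normalization conventions. So there is no ``paper's own proof'' to compare against beyond the citation.

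Your sketch is in the spirit of Moen's direct computation, and the overall architecture---realize $\omega_{\psi_0,\mu}$ in a Schr\"odinger model on $\mathcal{S}(k_0)$, compute the $\eta$-isotypic projector, and track the dependence on $a,b$ through rescalings of the symplectic form---is sound. The genuine gap is at the step where you write ``Applying Tate's local functional equation, this zeta integral is nonzero precisely when $\epsilon(\eta\mu^{-1},\psi_0({\rm Tr}(\delta\,\cdot\,)))$ takes a specific sign.'' Tate's functional equation relates $Z(\phi,\chi,s)$ to $Z(\widehat{\phi},\chi^{-1},1-s)$ via the gamma factor; it does not furnish a nonvanishing criterion of the form ``$Z\neq 0$ iff $\epsilon=\pm 1$.'' What actually happens in the $\U(1)\times\U(1)$ computation is different: after writing out the action of $\U(W_a)\cong k^1$ explicitly (via the Weyl element and the Weil cocycle), the coefficient of $\eta$ in the decomposition of $\omega_{\psi_0,\mu}$ is an explicit Weil index / normalized Gauss sum attached to the conjugate-symplectic character $\eta\mu^{-1}$ and the additive character $\psi_0({\rm Tr}(\delta\,\cdot\,))$. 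One then \emph{identifies} that Weil index with $\epsilon(\eta\mu^{-1},\psi_0({\rm Tr}(\delta\,\cdot\,)))\in\{\pm 1\}$, and the dichotomy comes from the fact that this sign flips when $a\cdot b$ crosses to the other class modulo $\mathbb{N}k^{\times}$. So the epsilon factor enters as the \emph{value} of an explicit intertwining constant, not as a nonvanishing criterion delivered by the functional equation. If you rewrite that one step accordingly---compute the intertwining constant as a Gauss sum and match it to the epsilon factor---the rest of your outline goes through and reproduces the cited result.
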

 \vskip 5pt
 
 \noindent{\bf Remark:} We note that our convention here differs from [HKS] in two aspects. 
 Namely, we have adopted the convention that on $W_a \otimes V_b$,  the symplectic form is 
 ${\rm Tr}(\langle-,-\rangle_{W_a} \otimes \langle-,-\rangle_{V_b})$. In [HKS], the symplectic form is 
 \[  \frac{1}{2} \cdot {\rm Tr}(\langle-,-\rangle^{\sigma} _{W_a} \otimes \langle-,-\rangle_{V_b}). \]
 Besides the factor of $1/2$, the skew-hermitian form on $W_a$ is conjugated by $\sigma$, which is necessitated by the convention adopted by  [HKS] that skew-hermitian forms are linear in the second variable and hermitian forms are linear in the first variable.
  Conjugating the form on $W_a$ by $\sigma$ has the effect of replacing $\delta$ by $-\delta$ in [HKS, Corollary 8.5].
 \vskip 10pt

To apply the above theorem  to [GGP, Conjecture 16.3], set $\eta = \alpha \cdot \beta$ in the theorem, and note that the distinguished character $\chi_0$ of $A_M \times A_N = \Z/2\Z \times \Z/2\Z$ given in [GGP, Conjecture 16.3] satisfies
\[  \chi_0(-1,1) = \chi_0(1,-1) = \epsilon(M \otimes N(\mu^{-1}), \psi_0({\rm Tr}(\delta -)). \]
Thus, Theorem \ref{T:HKS} implies that
\[  \text{$\chi_0$ is trivial} \Longleftrightarrow \Hom_{\U(W)}( \alpha \cdot \beta, \omega_{W,\psi_0,\mu})\ne 0 
\]
and
\[  \text{$\chi_0$ is nontrivial} \Longleftrightarrow \Hom_{\U(W)}( \alpha' \cdot \beta' , \omega_{W', \psi_0, \mu}) \ne 0. \]  
\vskip 5pt
\noindent  This verifies [GGP, Conjecture 16.3] for this case.
\vskip 15pt

 \section{Restriction from $\U(2)$ to $\U(1)$} \label{S:U(2)xU(1)}

 In this section, we consider the restriction problem from $\U(2)$ to $\U(1)$. This problem has been studied by H. Saito [Sa2] and T. Konno [Ko], but we shall give an independent treatment here and relate  the result to [GGP, Conjecture 16.3].
 \vskip 10pt

Recall that in \S \ref{S:LV}, we have given a construction of rank $2$ hermitian spaces $V_B$ using quaternion algebras $B$ over $k_0$, together with a non-degenerate rank $1$ subspace:
\[  L_B \hookrightarrow V_B,\]
such that
\[  L_B^{\perp} = \langle 1\rangle. \]
When $B$ is split, this gives a pair of split hermitian spaces $L \subset V$, with
\[  \disc(L) = -1. \]
On the other hand, if $B$ is the quaternion division algebra $D$, one obtains a relevant pair 
$L' \subset V'$ with $V'$ anisotropic.
The groups 
\[  G = G(V) \times G(L) \quad \text{and} \quad G' = G(V') \times G(L') \]
are relevant pure inner forms of each other.
  \vskip 10pt

Suppose that $M$ is a conjugate-symplectic 2-dimensional representation of $WD(k)$, with component group $A_M$, so that $M$  
determines a Vogan packet $\Pi_M$ of $\U(V)$. 
We fix an additive character $\psi$ of $k/k_0$, and  translate it by $-2 \cdot \disc (L) =2$, using the resulting character  $\psi_2$ to fix the bijection
\[  J(\psi_2) : \Pi_M \leftrightarrow \text{Irr}(A_M). \]
Recall that the parameter $M$ gives rise to a representation 
\[  \text{$\tau \boxtimes \chi$ of $\GU(V) = (\GL_2(k_0) \times k^{\times}) /\Delta k_0^{\times}$}. \]
If $\tau'$ denotes the Jacquet-Langlands lift of $\tau$ to $D^{\times}$, then $\Pi_M$ is simply the set of irreducible constituents in the restriction of $\tau \boxtimes \chi$ and $\tau' \boxtimes \chi$ to $\U(V)$ and $\U(V')$ respectively.
\vskip 5pt

Similarly, suppose that $N$ is a 1-dimensional conjugate-orthogonal representation of $WD(k)$. Then $N$ determines a character $\eta$ of $k^{\times}/k_0^{\times}$, which may be regarded as a character of $\U(L_B)$. We are interested in determining 
\[  \Hom_{\U(L_B)}(\pi_B \otimes \eta, \CC) \]  
for $\pi_B \in \Pi_{M,B}$ and $\eta$ the character of $\U(L_B)$ corresponding to $N$.
 \vskip 10pt

Since the embedding 
\[  \U(L_B) \hookrightarrow \U(V_B) \subset GU^+(V_B) \]
is given by the diagonal map
\[  k^{\times}/k_0^{\times} \hookrightarrow (B^{\times} \times k^{\times})/\Delta k_0^{\times}, \]
we see that
\[ \bigoplus_B  \bigoplus_{\pi_B \in \Pi_{M,B}} \Hom_{\U(L_B)}(\pi_B \otimes \eta, \CC) \]
\[  
= \Hom_{k^{\times}}(\tau,  \chi^{-1} \eta^{-1}) +  
 \Hom_{k^{\times}}(\tau',  \chi^{-1} \eta^{-1}). \]
 \vskip 10pt

 \noindent Now we note the following theorem of Waldspurger [Wa2], Tunnell [Tu] and Saito [Sa]:
 \vskip 5pt
 
 \begin{thm} \label{T:saito-tunnell}
 Let $\tau$ be a representation of $\GL_2(k_0)$ with $L$-parameter $N(\tau)$ and Jacquet-Langlands lift $\tau'$ on $D^{\times}$. For any character $\nu$ of $k^{\times}$, with $\nu|_{k_0^{\times}} = \omega_{\tau}$, we have 
 \[  \dim \Hom_{k^{\times}}(\tau, \nu) + \dim \Hom_{k^{\times}}(\tau', \nu) =1. \]
 Moreover,  
 \[  \Hom_{k^{\times}}(\tau,  \nu) \ne 0 \Longleftrightarrow  
 \epsilon( N(\tau)|_{WD(k)} \otimes \nu^{-1}, \psi)= 1.\]
 \end{thm}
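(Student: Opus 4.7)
This is the Saito--Tunnell--Waldspurger theorem; my approach would be via theta correspondence, in the spirit of the $\U(1)\times\U(1)$ analysis of Theorem \ref{T:HKS}.

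\textbf{Step 1: Multiplicity one.} First I would show that each of the two Hom spaces has dimension at most one. This is the local multiplicity-one theorem of Prasad, obtained from a Gelfand-pair argument on the double coset spaces $k^\times \backslash \GL_2(k_0) / k^\times$ and $k^\times \backslash D^\times / k^\times$.

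\textbf{Step 2: Dichotomy and root number.} Next, to prove that the sum of the two dimensions equals one and to identify the distinguished constituent, I would invoke the see-saw obtained from the orthogonal decomposition $B = k \oplus k\delta$ of the quaternion algebra $B$ into two two-dimensional $k_0$-subspaces under the reduced norm form. Inside the outer dual pair $(\GO(B), \GL_2)$, the subgroup $\GO(k) \times \GO(k\delta) \subset \GO(B)$ is paired with the diagonal $\GL_2 \hookrightarrow \GL_2 \times \GL_2$. The theta lift from $\GL_2(k_0)$ to $\GO(B)^0$ realizes the Shimizu correspondence, carrying $\tau$ to $\tau \boxtimes \tau'$; the theta lift from $\GO(k) \cong k^\times \rtimes \langle \sigma \rangle$ to $\GL_2(k_0)$ sends the character $\nu$ of $k^\times$ to its dihedral automorphically induced representation $\pi(\nu)$. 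The see-saw identity then translates the sum $\dim \Hom_{k^\times}(\tau, \nu) + \dim \Hom_{k^\times}(\tau', \nu)$ into a period of $\tau$ against $\pi(\nu)$, and dichotomy follows from Howe duality applied across both Brauer classes of $B$. The sign (determining which of $B=M_2(k_0)$ or $B=D$ contributes) is then governed by the local root number of the character of $k^\times$ produced by the theta correspondence, which one rearranges via Tate's local epsilon-factor formula to match the invariant $\epsilon(N(\tau)|_{WD(k)} \otimes \nu^{-1}, \psi)$.

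\textbf{Main obstacle.} The hard part will be the epsilon-factor bookkeeping in Step 2: the splittings of the metaplectic cover depend on auxiliary data such as the trace-zero element $\delta$ and a splitting character of $k^\times$, and one must verify that these data combine to produce precisely the intrinsic additive character $\psi$ of $k/k_0$, with no spurious twists and with the correct sign attached to the division algebra $D$ via its Hasse invariant. A secondary subtlety arises when $\tau$ is itself dihedral with respect to $k/k_0$: the Shimizu lift further decomposes on restriction to $(B^\times)^+$, and one must check that the $k^\times$-period picks out the correct summand, a matter controlled by the Vogan-packet labeling of Section \ref{S:endo-theta}.
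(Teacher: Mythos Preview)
The paper does not prove this theorem at all: it is simply stated as a known result, attributed to Waldspurger [Wa2], Tunnell [Tu], and Saito [Sa], and then applied. So there is no ``paper's own proof'' to compare against.

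Your sketch is a reasonable outline of one of the standard approaches (close to Waldspurger's and to Prasad's treatment in [P3]). A couple of small corrections to the description in Step 2: the Shimizu lift from $\GL_2(k_0)$ to $\GSO(B) \cong (B^\times \times B^\times)/\Delta k_0^\times$ carries $\tau$ to $\tau \boxtimes \tau^\vee$ (or its Jacquet--Langlands transfer), not to $\tau \boxtimes \tau'$; the dichotomy over the two Brauer classes of $B$ is what produces the sum over $\tau$ and $\tau'$. Also, the decomposition of $B$ should be written $B = k \oplus k\cdot b$ with $b$ a trace-zero element normalizing $k$, matching the paper's conventions in \S\ref{S:LV}. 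With those adjustments, your plan is the standard see-saw proof, and the ``main obstacle'' you identify (tracking the additive character and splitting data through to the intrinsic $\psi$ on $k/k_0$) is indeed where the work lies. The secondary subtlety you flag about the dihedral case is not an obstacle for \emph{this} theorem, since the statement here concerns the full $\tau$ rather than its restriction to $\GL_2(k_0)^+$; that refinement is the content of the \emph{next} theorem in the paper.
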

 \vskip 10pt
 
 Applying this theorem to the case at hand, with $\nu = \chi^{-1} \cdot \eta^{-1}$, 
 we immediately deduce [GGP, Conjecture 16.1] (multiplicity one in $L$-packets). 
 In fact, when $\tau$ is not dihedral with respect to $k/k_0$, this theorem also implies [GGP, Conjecture 16.3]. Indeed, in this case, $\tau \boxtimes \chi$ remains irreducible when restricted to $\U(V)$, so that
 \[  \Pi_M = \{ \pi_M, \pi'_M \}. \]
 Moreover, $A_M \cong A_N \cong \Z/2\Z$ and the distinguished character $\chi_0$ of
 $A_M \times A_N$  satisfies
 \[  \chi_0(-1,1) = \chi_0(1,-1) = \epsilon(N(\tau)|_{WD(k)} \otimes \chi \cdot \eta, \psi). \]
 Hence we deduce that
 \[  \text{$\chi_0$ is trivial} \Longleftrightarrow \Hom_{\U(L)}(\pi_M \otimes \eta,\CC) \ne 0 \]
 and 
 \[  \text{$\chi_0$ is nontrivial} \Longleftrightarrow \Hom_{\U(L')}(\pi'_M \otimes \eta,\CC) \ne 0. \] 
 \vskip 10pt
 
 Suppose then that  $\tau$ is dihedral with respect to $k/k_0$, so that 
 \[  N(\tau)|_{WD(k)} =  \alpha + \alpha^{\sigma} \]
 for a character $\alpha$ of $k^{\times}$.  
 In this case, $\tau$ is the sum of two distinct irreducible summands when restricted to $\GL_2(k_0)^+$ and the same holds for its Jacquet-Langlands lift $\tau'$ (if it exists). 
 Now we have
 the following refinement of Theorem \ref{T:saito-tunnell}, due to the third author [P3]. 
  \vskip 10pt
  
  \begin{thm}
  Fix an additive character $\psi$ of $k/k_0$.
  Suppose that $\tau$ is dihedral with respect to $k/k_0$. 
  \vskip 5pt
  
\noindent (i)   The two irreducible summands 
  of $\tau_{\GL_2(k_0)^+}$ can be indexed as  $\tau^{++}$
and $\tau^{--}$ such that the following holds. For any 
character $\beta$ of $k^\times$ with 
\[ \beta|_{k_0^{\times}} = \omega_{\tau} = \alpha|_{k_0^\times} \cdot  \omega_{k/k_0}, \]
we have
\[  \Hom_{k^{\times}}(\tau^{++}, \beta) \ne 0 \Longleftrightarrow 
\epsilon(\alpha \cdot \beta^{-1},\psi) = \epsilon(\alpha^{\sigma} \cdot \beta^{-1},\psi) = +1, \] 
and 
\[  \Hom_{k^{\times}}(\tau^{--}, \beta) \ne 0 \Longleftrightarrow 
\epsilon(\alpha \cdot \beta^{-1},\psi) = \epsilon(\alpha^{\sigma} \cdot \beta^{-1},\psi) = -1. \]

\vskip 10pt

\noindent (ii) Similarly, the two irreducible summands of $\tau'|_{(D^{\times})^+}$ can be labelled  
 as $(\tau')^{+-}$ and $(\tau')^{-+}$ such that the following holds. 
  For any character $\beta$ of $k^\times$ as above,
\[  \Hom_{k^{\times}}((\tau')^{+-}, \beta) \ne 0 \Longleftrightarrow 
\epsilon(\alpha \cdot \beta^{-1},\psi) = +1 \quad \text{and} \quad  \epsilon(\alpha^{\sigma} \cdot \beta^{-1},\psi) = -1,\] 
and 
\[  \Hom_{k^{\times}}((\tau')^{-+}, \beta) \ne 0 \Longleftrightarrow 
\epsilon(\alpha \cdot \beta^{-1},\psi) = -1 \quad \text{and} \quad  \epsilon(\alpha^{\sigma} \cdot \beta^{-1},\psi) = +1. \]
\end{thm}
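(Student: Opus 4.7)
My plan is to realize both $\tau$ and its Jacquet--Langlands transfer $\tau'$ as theta lifts from characters of $\U(1)$ via the endoscopic construction of Proposition \ref{P:endo-U2}, and then to reduce the branching to $k^{\times}$ to the rank-one case governed by Theorem \ref{T:HKS}. The whole argument is a see-saw computation that converts the question into a product of two epsilon factor criteria for $\alpha$ and $\alpha^{\sigma}$.

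First, I would fix a character $\chi$ of $k^{\times}$ with $\chi|_{k_0^{\times}} = \omega_{k/k_0} \cdot \alpha^{-1}|_{k_0^{\times}}$ and set $\mu = \alpha \chi$. Then $M = \mu \oplus (\mu \cdot \alpha^{\sigma}/\alpha)$ is a 2-dimensional conjugate-symplectic parameter with $A_M = \Z/2\Z \times \Z/2\Z$. By Proposition \ref{P:endo-U2} the four elements of $\Pi_M$ are
\[
\pi^{++} = \theta_{\psi_0,\mu,W,V}(\chi), \quad \pi^{--} = \theta_{\psi_0,\mu,W',V}(\chi), \quad \pi^{+-} = \theta_{\psi_0,\mu,W',V'}(\chi), \quad \pi^{-+} = \theta_{\psi_0,\mu,W,V'}(\chi),
\]
and, after restriction from $\GU^+(V_B)$ to $\U(V_B)$, these correspond respectively to the summands $\tau^{\epsilon_1\epsilon_2} \boxtimes \chi$ (on $V$) and $(\tau')^{\epsilon_1\epsilon_2} \boxtimes \chi$ (on $V'$) occurring in the statement. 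Since the embedding $k^{\times} \hookrightarrow B^{\times}$ corresponds to the diagonal embedding $\U(L_B) \hookrightarrow \GU^+(V_B)$, the non-vanishing of $\Hom_{k^{\times}}(\tau^{\epsilon_1\epsilon_2}, \beta)$ is equivalent to the non-vanishing of $\Hom_{\U(L_B)}(\pi^{\epsilon_1\epsilon_2}, (\chi\beta)^{-1}|_{\U(L_B)})$.

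Second, I would apply the see-saw associated with the dual pairs $(\U(V_B), \U(W_a))$ and $(\U(L_B) \times \U(L_B^{\perp}), \U(W_a) \times \U(W_a))$ inside $\Sp(V_B \otimes_k W_a)$, via the decomposition $V_B = L_B \oplus L_B^{\perp}$ with $L_B^{\perp} \cong \langle 1 \rangle$. Since $\U(L_B^{\perp})$ is abelian, decomposing by its characters yields an identity of the shape
\[
\Hom_{\U(L_B)}(\theta_{W_a \to V_B}(\chi), \nu_1) \;=\; \sum_{\nu_2} \Hom_{\U(W_a)}\!\bigl(\theta_{L_B \to W_a}(\nu_1) \otimes \theta_{L_B^{\perp} \to W_a}(\nu_2),\, \chi\bigr),
\]
where each rank-one theta lift is a computable character of $k^{\times}/k_0^{\times}$ whose existence is dictated by Theorem \ref{T:HKS}. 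The right hand side is a Hom between characters of the abelian group $\U(W_a)$, and the HKS non-vanishing criterion converts each summand into an epsilon factor condition in the variables $\nu_1, \nu_2, \chi, \mu$.

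Third, substituting $\nu_1 = (\chi\beta)^{-1}$ and $\mu = \alpha\chi$ and exploiting the multiplicativity of local root numbers, these conditions collapse to simultaneous dichotomies $\epsilon(\alpha\beta^{-1},\psi) = \epsilon_1$ and $\epsilon(\alpha^{\sigma}\beta^{-1},\psi) = \epsilon_2$, with the exchange $\alpha \leftrightarrow \alpha^{\sigma}$ arising from the $\sigma$-symmetry of $M|_{WD(k)}$. Comparing with the Saito--Tunnell sum $\dim\Hom_{k^{\times}}(\tau,\beta) + \dim\Hom_{k^{\times}}(\tau',\beta) = 1$ of Theorem \ref{T:saito-tunnell} forces each possible sign combination to occur in exactly one summand, giving the stated formulas. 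The main obstacle will be the consistent tracking of signs through the see-saw: the discriminants $\disc L_B$ and $\disc W_a$, the splitting data $(\psi_0,\mu)$, and the labelling in Proposition \ref{P:endo-U2}, namely $\eta(a_1) = \epsilon(B) \cdot \omega_{k/k_0}(a)$ and $\eta(a_2) = \omega_{k/k_0}(a)$, must all line up with the symmetric labelling $(\epsilon_1,\epsilon_2)$ of the two epsilon factors of $\alpha\beta^{-1}$ and $\alpha^{\sigma}\beta^{-1}$ so that the constituents of $\tau|_{\GL_2(k_0)^+}$ and $\tau'|_{(D^{\times})^+}$ are distributed as claimed.
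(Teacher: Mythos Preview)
Your approach is correct and is essentially the same as what the paper does. The paper attributes the theorem itself to [P3] without reproducing a proof, but immediately afterwards gives exactly the see-saw argument you describe---realizing the packet members as theta lifts of $\chi$ from $\U(W_a)$ via Proposition \ref{P:endo-U2}, then using the see-saw for $\U(L_{-b}+L_1)$ versus $\U(W_a)\times\U(W_a)$ to reduce $\Hom_{\U(L_{-b})}(\pi_\rho,\eta^{-1})$ to a pair of rank-one conditions, each of which is resolved by Theorem \ref{T:HKS}. Your two epsilon dichotomies for $\alpha\beta^{-1}$ and $\alpha^\sigma\beta^{-1}$ are the paper's conditions (a) and (b), namely $\epsilon(M_1\otimes N,\psi)=\rho(a_1)$ and $\epsilon(M_2\otimes N,\psi)=\rho(a_2)$.

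One small point of presentation: in your see-saw identity you write a sum over $\nu_2$ on the right-hand side, whereas the cleaner formulation (and the one the paper uses) keeps the full Weil representation $\omega_{\psi_{0,2},\mu,W_a}$ of the pair $\U(L_1)\times\U(W_a)$ there, so that the identity reads
\[
\Hom_{\U(L_{-b})}(\pi_\rho,\eta^{-1}) = \Hom_{\U(W_a)}\bigl(\theta_{W_a,L_{-b}}(\eta^{-1})\otimes\omega_{\psi_{0,2},\mu,W_a},\,\chi\bigr).
\]
Your decomposition by $\nu_2$ is of course equivalent, but writing it as a sum obscures that only one term survives (the one forced by the character condition on $\U(W_a)$), which is precisely what turns the problem into a second application of HKS. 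You should also be explicit that the additive character must be translated---the paper uses $\psi_{0,2}$, corresponding to scaling $\psi$ by $-2\cdot\disc(L)$---to make the labelling $\tau^{++}$ match the $\psi$-generic constituent; this is the sign bookkeeping you flag as the main obstacle, and it is indeed where care is required.
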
 
\vskip 15pt

This theorem immediately implies [GGP, Conjecture 20.1], which is a variant of [GGP, Conjecture 16.3]. To obtain the full [GGP, Conjecture 16.3], one would need to relate the labelling supplied by the above theorem with the bijection $\Pi_M \leftrightarrow \text{Irr}(A_M)$ determined by the additive character $\psi_2$.  Recall that this bijection was constructed in \S \ref{S:endo-theta} using theta correspondence.  
 \vskip 10pt
 
 To recall the construction, note that when $\tau$ is dihedral with respect to $k/k_0$, we have
 \[  M = M_1 + M_2 \]
 with $M_i$ conjugate-symplectic (not necessarily distinct).
We assume that $M_1 \ne M_2$, since the case $M_1 \cong M_2$ is similar. 
 Then we have
 \[  A_M = \Z/2\Z a_1 \times \Z/2\Z a_2. \] 
 Setting
 \[  \mu = M_1 \quad \text{and} \quad   \chi/\chi^{\sigma} = M_1 \cdot M_2, \]
the packet $\Pi_M$  consists of the representations
 \[  \pi_{\rho}  = \theta_{\psi_{0,2}, \mu, V_B, W_{a}}(\chi) \]
where $W$ is the rank 1 skew-hermitian space of discriminant $\delta$, and $\psi$ and $\psi_0$ are related by
\[  \psi(x) = \psi_0(\frac{1}{2} \cdot {\rm Tr}(\delta \cdot x)). \] 
Moreover,  we have
\[  \rho(a_1) = \epsilon(B) \cdot \omega_{k/k_0}(a) \]
and
\[  \rho(a_2) = \omega_{k/k_0}(a). \]
 
\vskip 10pt
 
Now we consider the see-saw diagram

$$\xymatrix{ \U(L_{-b}+ L_{1})   \ar@{{}-{}}[dd]  & 
\U(W_a)\times \U(W_{a}) 
\ar@{{}-{}}[dd]\\
{} & {}\\
\U(L_{-b}) \times \U(L_1) & \Delta \U(W_a)
}$$
 where $L_{-b}$ denotes the rank 1 hermitian space with discriminant $-b$. 
Note that the rank 2 hermitian space $L_{-b} + L_1$ is isomorphic to $V_B$ with 
\[  \epsilon(B) =  \omega_{k/k_0}(b), \]
and the pair $L_{-b} \subset L_{-b}+ L_1$ is isomorphic to $L_B \subset V_B$
We start with the representation $\chi|_{\U(W_a)}$ on $\Delta \U(W_a)$ and the character $\eta^{-1}$ on $\U(L_{-b})$,  and consider the theta correspondence with respect to the additive character $\psi_{0,2}$. Then 
 the seesaw identity gives

\[  \Hom_{\U(L_{-b})}(\pi_{\rho}, \eta^{-1}) =
\Hom_{\U(W_a)}(\theta_{\psi_0^{-1}, \mu, W_{a}, L_{-b}}(\eta^{-1}) \otimes \omega_{\psi_{0,2}, \mu, W_{a}}, \chi). \] 
Hence, 
\[   \Hom_{\U(L_{-b})}(\pi_{\rho}, \eta^{-1}) \ne 0 \]
if and only if the following two conditions hold:
\vskip 5pt

\begin{enumerate}[(a)]
 \item 
 \[ \theta_{\psi_{0,2}, \mu, W_{a}, L_{-b}}(\eta^{-1})  \ne 0, \]
in which case, $\theta_{\psi_{0,2}, \mu, W_{a}, L_{-b}}(\eta^{-1}) = \eta^{-1}$;
\vskip 5pt

\item 
\[  \Hom_{\U(W_a)}(\eta^{-1} \otimes \omega_{\psi_{0,2}, \mu, W_{a}}, \chi) \ne 0. \] 
\end{enumerate}

\vskip 5pt

\noindent But both (a) and (b) are special cases of Theorem \ref{T:HKS} [HKS, Corollary 8.5]. 
We deduce that (a) holds if and only if
\[  \epsilon(\mu^{-1} \eta^{-1}, \psi_{0,2}({\rm Tr}(\delta -)) = \omega_{k/k_0}(-b) \cdot \omega_{k/k_0}(a) \]
or equivalently
\[  \epsilon(M_1 \otimes N, \psi) = \omega_{k/k_0}(b) \cdot \omega_{k/k_0}(a) = \rho(a_1). \]
Similarly, (b) holds if and only if
\[  \epsilon(\mu^{-1} \cdot \eta \cdot \chi/\chi^{\sigma}, \psi_{0,2}({\rm Tr}(\delta -))) = \omega_{k/k_0}(a), \]
  or equivalently
  \[  \epsilon(M_2 \otimes N,\psi)  = \omega_{k/k_0}(a) = \rho(a_2). \]
 \vskip 10pt
 
\noindent  Thus, we conclude that 
\[   \Hom_{\U(L_b)}(\pi_{\rho}, \eta^{-1}) \ne 0 \]
if and only if $\rho$ is the distinguished character $\chi_0$ of [GGP, Conjecture 16.3], where the local root numbers are computed using the additive character $\psi$ of $k/k_0$.
  
 \vskip 15pt
 \section{Theta correspondence for $\U(2) \times \U(2)$}  \label{S:theta-U(2)xU(2)}
 
 Before moving on to the next case of [GGP, Conjecture 16.3], we need to establish some results about the theta correspondence for $\U(2) \times \U(2)$. More precisely, let $V_B$ be the rank 2 hermitian space introduced in \S \ref{S:LV}. and let $W_{B'}$ be the rank 2 skew-hermitian space obtained from $V_{B'}$ be scaling by the trace zero element $\delta \in k^{\times}$.
 Then we are interested in establishing the theta correspondence for the dual pair
 \[  \U(V_B) \times \U(W_{B'}) \]
 relative to the data $(\psi_0, \mu)$.
 
 \vskip 10pt

 The first result is the following proposition due to Harris [Ha, Lemma 4.3.3] and Konno-Konno [KK, Prop. 5.3 and Thm. 5.4].
 \vskip 5pt
 
 \begin{prop}  \label{P:harris}
 Let $M$ be a 2-dimensional conjugate-symplectic representation of $WD(k)$ which gives rise to a $L$-packet $\Pi_{M,B}$ for $\U(V_B)$ and $\Pi_{M,B'}$ for $\U(W_{B'})$. 
 \vskip 5pt
 
 \noindent (i) For any $\pi \in \Pi_{M,B}$, 
 \[  \theta_{\psi_0, V_B, W_{B'}, \mu}(\pi) \ne 0 \Longleftrightarrow \epsilon(M \otimes \mu^{-2}, \psi) =  \epsilon(B) \cdot \epsilon(B'). \]
 Note that the root number above is independent of the choice of the additive character $\psi$ of $k/k_0$.

 \noindent (ii) If the condition of (i) holds, then $\theta_{\psi_0, V_B, W_{B'}, \mu}(\pi)$ belongs to $\Pi_{M, B'}$. In other words, the theta correspondence is the identity map on $L$-parameters.
 
 \vskip 10pt
 
 \noindent Thus, under the theta correspondence  for $(\psi_0,\mu)$, there is a unique $B'$ such that the theta lift gives a bijection
 \[  \Pi_{M,B} \longleftrightarrow \Pi_{M, B'}. \]
\end{prop}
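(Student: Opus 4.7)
The plan is to reduce the $\U(V_B) \times \U(W_{B'})$ theta correspondence to the Jacquet--Langlands correspondence between $B^{\times}$ and $(B')^{\times}$, exploiting the descriptions
\[ \GU(V_B) \cong (B^{\times} \times k^{\times})/\Delta k_0^{\times} \quad \text{and} \quad \GU(W_{B'}) \cong ((B')^{\times} \times k^{\times})/\Delta k_0^{\times} \]
recalled in Section \ref{S:LV}.

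First, I pass to the similitude theta correspondence relative to the extended Weil representation $\Omega_{\psi_0, \mu}$ on $\GU^+(V_B) \times \GU^+(W_{B'})$. Since the $L$-packets $\Pi_{M,B}$ and $\Pi_{M,B'}$ are defined by restriction from the similitude groups, computing the unitary theta lift reduces to computing the similitude theta lift of an irreducible representation $\tau \boxtimes \chi$ of $\GU(V_B)$, where $\tau$ has Langlands parameter $N$ and $\chi$ satisfies $M = N|_{WD(k)} \otimes \chi$. Following the classical analysis of Shimizu and its unitary-group refinement by Harris, this similitude theta lift takes the form $\tau' \boxtimes \chi$, where $\tau'$ is the Jacquet--Langlands transfer of $\tau$ from $B^{\times}$ to $(B')^{\times}$ (interpreted as zero when $\tau'$ does not exist). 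Since $\tau$ and $\tau'$ share the parameter $N$, the lift has parameter $M = N|_{WD(k)} \otimes \chi$, which establishes (ii).

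For (i), I translate the existence of $\tau'$ into the epsilon factor condition. The Jacquet--Langlands transfer yields a non-zero representation exactly when $B = B'$, or when $B \ne B'$ and $\tau$ is discrete series, i.e., when $N$ is an irreducible two-dimensional representation of $WD(k_0)$. On the other hand, $M \otimes \mu^{-2}$ is conjugate-symplectic (being the tensor product of a conjugate-symplectic and a conjugate-orthogonal representation), so $\epsilon(M \otimes \mu^{-2}, \psi) \in \{\pm 1\}$ is independent of the choice of additive character $\psi$ trivial on $k_0$. The precise identity matching this sign with $\epsilon(B) \cdot \epsilon(B')$ is the content of [Ha, Lemma 4.3.3] and [KK, Proposition 5.3 and Theorem 5.4]; it can be derived from Shimizu's explicit formula for the theta lift together with Saito--Tunnell-type calculations of epsilon factors, or alternatively from doubling zeta integrals.

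The main obstacle is the careful bookkeeping of the splittings of the metaplectic cover. The similitude theta correspondence depends sensitively on the data $(\psi_0, \mu)$ used in the splitting, and the sign on the right-hand side of the epsilon factor identity is extremely sensitive to these choices. The bulk of the technical work in [Ha] and [KK] lies in verifying that the natural splittings furnished by $(\psi_0, \mu)$ yield precisely the formula $\epsilon(M \otimes \mu^{-2}, \psi) = \epsilon(B) \cdot \epsilon(B')$, with no unexpected twists arising from the central character of $\tau$ or from the auxiliary choice of $\chi$. Once this identification is pinned down, the asserted bijectivity of the theta correspondence between $\Pi_{M,B}$ and $\Pi_{M,B'}$ (for the unique $B'$ satisfying the epsilon factor condition) follows from the bijectivity of Jacquet--Langlands together with the Howe duality conjecture in this range.
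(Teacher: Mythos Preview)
The paper does not prove this proposition; it simply attributes the result to Harris [Ha, Lemma 4.3.3] and Konno--Konno [KK, Prop.~5.3 and Thm.~5.4]. So there is no argument in the paper to compare against, and ultimately your proposal also defers the essential content to those same references.

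That said, your sketch contains a genuine conceptual error. You assert that the similitude theta lift of $\tau \boxtimes \chi$ from $\GU^+(V_B)$ to $\GU^+(W_{B'})$ is $\tau' \boxtimes \chi$, where $\tau'$ is the Jacquet--Langlands transfer of $\tau$ to $(B')^{\times}$. If that were literally true, then taking $B' = B$ (so that Jacquet--Langlands is the identity and $\tau' = \tau$ always exists) the theta lift would be nonzero for \emph{every} $\pi \in \Pi_{M,B}$. But the proposition says the lift to $B' = B$ is nonzero only when $\epsilon(M \otimes \mu^{-2}, \psi) = +1$, and this sign is genuinely $-1$ for many $M$ (for instance when $M = M_1 + M_2$ with $\epsilon(M_1\mu^{-2},\psi) \ne \epsilon(M_2\mu^{-2},\psi)$). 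The $\U(2) \times \U(2)$ theta correspondence exhibits a dichotomy in the sense of [HKS]: for each $\pi$ there is a \emph{unique} $B'$ to which the lift is nonzero, and the selection is governed by a local root number, not by whether a Jacquet--Langlands transfer happens to exist. Shimizu's correspondence, which does realize Jacquet--Langlands, is for a different dual pair (an orthogonal--symplectic pair of the type $(\GO(B), \SL_2)$), and the mechanism does not carry over to the unitary $(2,2)$ case in the way you suggest.

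Your intuition that part (ii) should be accessible via the $(B^{\times} \times k^{\times})$ description of $\GU(V_B)$ is reasonable: one can argue for preservation of $L$-parameters by compatibility with base change to $\GL_2(k)$, where the corresponding equal-rank theta correspondence is the identity on parameters. But part (i) is precisely the dichotomy statement, and that requires either the doubling method or the explicit splitting calculations of [HKS]; it cannot be read off from Jacquet--Langlands alone.
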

 
 \vskip 5pt
 
 If the parameter $M$ is non-dihedral (with respect to $k/k_0$), then $\# \Pi_{M,B} = 0$ or $1$. Hence the above proposition completely determines the theta lift of the representations in $\Pi_M$. When $M$ is dihedral with respect to 
$k/k_0$, then
 $\# \Pi_{M,B} = 0$ or $2$, and in the latter case, there are two possible bijections
 \[  \Pi_{M,B} \longleftrightarrow \Pi_{M, B'}, \] 
 which the above proposition does not resolve. In [P5], the third author has given a precise conjecture addressing this issue. The following theorem confirms the conjecture in [P5] for this case:
  \vskip 10pt
  
 \begin{thm}  \label{T:theta-U2}
 Suppose that $M = M_1 + M_2$ is dihedral with respect to $k/k_0$.
 Fix the additive character $\psi$ of $k/k_0$ which gives  bijections
 \[  \Pi_M \longleftrightarrow \text{Irr}(A_M), \]
 and let $\psi_0$ be the additive character of $k_0$ such that
 \[  \psi(x) = \psi_0(\frac{1}{2} \cdot {\rm Tr}(\delta x)). \]
 Then the permutation of $\Pi_M$ induced by the theta correspondence associated to $(\psi_0, \mu, \delta)$ is given by multiplication by the character $\rho_0$ of $A_M$ defined by
 \[  \rho_0(a_i) =  \epsilon(M_i \otimes \mu^{-2}, \psi_{2}) \] 
 with
  \[ \psi_{2}(x) = \psi_0({\rm Tr}(\delta  x)). \]
 \end{thm}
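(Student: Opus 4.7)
By Proposition~\ref{P:harris}, the theta correspondence $\theta_{\psi_0,\mu,V_B,W_{B'}}$ is a bijection from $\Pi_{M,B}$ onto $\Pi_{M,B'}$, where $B'$ is determined by $\epsilon(B)\epsilon(B') = \epsilon(M\otimes\mu^{-2},\psi_2)$. Since
\[ \epsilon(M\otimes\mu^{-2},\psi_2) = \epsilon(M_1\otimes\mu^{-2},\psi_2)\cdot\epsilon(M_2\otimes\mu^{-2},\psi_2) = \rho_0(a_1)\cdot\rho_0(a_2) = \rho_0(a_1a_2), \]
the shift of the $\epsilon(B)$-component of $\rho$ already agrees with multiplication by $\rho_0$. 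Writing $\pi_\rho = \theta_{\psi_0,M_1,W_a,V_B}(\chi)$ as in \S\ref{S:endo-theta}, with $\chi/\chi^\sigma = M_1M_2$ and $\rho(a_2) = \omega_{k/k_0}(a)$, the theorem reduces to showing
\[ \theta_{\psi_0,\mu,V_B,W_{B'}}(\pi_\rho) = \theta_{\psi_0,M_1,W_{a'},V_{B'}}(\chi) \]
(using $\U(V_{B'})=\U(W_{B'})$ via scaling by $\delta$) for $a' \in k_0^{\times}$ with $\omega_{k/k_0}(a'/a) = \epsilon(M_2\otimes\mu^{-2},\psi_2)$.

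\textbf{See-saw setup.} The plan is to apply the see-saw identity to the dual pair $\U(V_B)\times\U(W_{B'})$, decomposing the skew-hermitian space as $W_{B'} = W_a \oplus W_{a'}$ into two rank-one pieces (with $a'$ a free parameter):
\[ \xymatrix{
\U(V_B)\times\U(V_B) \ar@{{}-{}}[dd] & \U(W_{B'}) \ar@{{}-{}}[dd] \\
{} & {} \\
\Delta\U(V_B) & \U(W_a)\times\U(W_{a'}).
} \]
Endowing the big dual pair with data $(\psi_0,\mu)$, the see-saw identity applied to $\sigma = \pi_\rho$ on $\Delta\U(V_B)$ and $\chi\boxtimes\chi'$ on $\U(W_a)\times\U(W_{a'})$ yields
\[ \Hom_{\U(V_B)}\bigl(\theta_{\psi_0,\mu,W_a,V_B}(\chi)\otimes\theta_{\psi_0,\mu,W_{a'},V_B}(\chi'), \pi_\rho\bigr) = \Hom_{\U(W_a)\times\U(W_{a'})}\bigl(\theta_{\psi_0,\mu,V_B,W_{B'}}(\pi_\rho), \chi\boxtimes\chi'\bigr). \]
The theta lift $\theta_{\psi_0,\mu,W_a,V_B}(\chi)$ differs from the defining $\theta_{\psi_0,M_1,W_a,V_B}(\chi) = \pi_\rho$ by a twist of the Weil representation governed by [HKS, Appendix A]; this translates into twisting by a character $(\mu/M_1)\circ\det$ of $\U(V_B)$ and by modifying the input character on $\U(W_a)$ by a certain power of $\mu/M_1$, and analogously on $\U(W_{a'})$.

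\textbf{Conclusion and main obstacle.} Non-vanishing of the right-hand side of the see-saw identity, as $\chi'$ varies, pins down the restriction of $\theta_{\psi_0,\mu,V_B,W_{B'}}(\pi_\rho)$ to the maximal torus $\U(W_a)\times\U(W_{a'})$. After tracking the twists above, the non-vanishing condition reduces to two applications of the Harris--Kudla--Sweet criterion (Theorem~\ref{T:HKS}) for the $\U(1)\times\U(1)$ theta correspondences attached to the rank-one hermitian summands of $V_B = L_B \oplus L_B^{\perp}$, a reduction carried out via a second see-saw analogous to the one used in \S\ref{S:U(2)xU(1)}. These applications produce the two root number equations constraining $\omega_{k/k_0}(a)$ and $\omega_{k/k_0}(a')$ in terms of $\epsilon(M_1\otimes\mu^{-2},\psi_2)$ and $\epsilon(M_2\otimes\mu^{-2},\psi_2)$ respectively; taking their ratio gives $\omega_{k/k_0}(a'/a) = \epsilon(M_2\otimes\mu^{-2},\psi_2)$. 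The principal technical obstacle is the careful bookkeeping of the splitting character twists arising from the discrepancy between the data $(\psi_0,\mu)$ used for the big see-saw pair $(\U(V_B),\U(W_{B'}))$ and the data $(\psi_0,M_1)$ used in the construction of $\pi_\rho$; once these are unwound using [HKS, Appendix A] and the multiplicativity of the local $\epsilon$-factor, the clean formula $\rho_0(a_i) = \epsilon(M_i \otimes \mu^{-2},\psi_2)$ drops out.
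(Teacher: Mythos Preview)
Your approach differs substantially from the paper's.  The paper separates into two cases.  When $B'$ is split, the two members of $\Pi_{M,B'}$ are distinguished by the Whittaker models they support; a direct computation of the Whittaker functional on the Weil representation $\omega_{\psi_0,V_B,W_{B'},\mu}$ shows that $\theta_{\psi_0,\mu}(\pi_\rho)$ is $\psi_0$-generic on $\U(W_{B'})$ if and only if $\Hom_{\U(L_B)}(\pi_\rho^\vee,\mu^{-2})\neq 0$.  This is precisely the $\U(2)\to\U(1)$ branching problem already solved in \S\ref{S:U(2)xU(1)}, and it yields the conditions $\rho(a_i)=\epsilon(M_i\otimes\mu^{-2},\psi_2)$ at once.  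This disposes of every case in which at least one of $B,B'$ is split.  For the remaining case (both $B$ and $B'$ non-split, so that $\epsilon(M\otimes\mu^{-2})=1$) the paper uses a global argument: one globalizes $(\pi,\psi_0,\mu,\delta)$, invokes the split-place result just proved together with the archimedean case, and appeals to the Langlands--Arthur multiplicity formula for endoscopic packets of $\U(2)$ to pin down the answer at the one recalcitrant place.

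Your nested see-saw attempts a uniform local argument, but there is a genuine gap.  The right-hand side of your see-saw is the restriction of $\theta(\pi_\rho)\in\Pi_{M,B'}$ to the elliptic torus $\U(W_a)\times\U(W_{a'})$; extracting from this \emph{which} packet member you have requires knowing the branching of each member of $\Pi_{M,B'}$ to that torus, which (after $\delta$-scaling) is again the content of \S\ref{S:U(2)xU(1)} --- so far so good.  The real problem is the left-hand side: it is a $\U(V_B)$-trilinear form among three representations each lying in an endoscopic packet.  Computing such trilinear forms by see-saw is exactly what \S\ref{S:trilinear} does, and that computation explicitly invokes Theorem~\ref{T:theta-U2} (to identify the intermediate $\U(2)\times\U(2)$ theta lift in step (a) there).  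Your ``second see-saw analogous to \S\ref{S:U(2)xU(1)}'' is not the see-saw of \S\ref{S:U(2)xU(1)} --- that one has a rank-$1$ space on the hermitian side, whereas here you must move a full rank-$2$ representation of $\U(V_B)$ across, which forces you through a $\U(2)\times\U(2)$ correspondence.  Unless you can exhibit a computation of that trilinear form that does not pass through the $\U(2)\times\U(2)$ theta lift, the argument is circular.  The paper's Whittaker-model shortcut avoids this entirely in the split case, and no purely local substitute for the global step in the non-split case is offered in the paper.
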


\vskip 5pt

\begin{proof}
 Consider first the case where $B'$ is split whereas $B$ is arbitrary. In this case, then two elements in $\Pi_{M,B'}$ can be distinguished by the Whittaker models they support. Computing Whittaker models of the Weil representation $\omega_{\psi_0, V_B, W_{B'},\mu}$, one sees
 that for $\pi_{\rho}  \in \Pi_{M,B}$, 
 the representation  $\theta_{\psi_0, V_B, W_{B'}, \mu}(\pi_{\rho})$  of $\U(W_{B'})$ is $\psi_0$-generic if and only if
 \[  \Hom_{\U(L_B)}(\pi_{\rho}^{\vee} , \mu^{-2}) \ne 0. \] 
By the result of the previous section, this holds if and only if
 \[  \rho(a_1) = \epsilon(M_1 \otimes \mu^{-2}, \psi_{2})  \quad \text{and} \quad \rho(a_2) =  
 \epsilon(M_2 \otimes \mu^{-2}, \psi_{2}), \]
as desired. This establishes the result when one of $B$ or $B'$ is split.

 \vskip 5pt
 
 The only remaining case is where $B$ and $B'$ are both non-split, so that  
 \[  \epsilon(M_1\otimes \mu^{-2}, \psi)\cdot  \epsilon(M_2 \otimes \mu^{-2}, \psi)   = 1. \]
 In this case, the desired result can be proved by a global method. We give a brief sketch of this.
 \vskip 5pt
 
Let $\pi$ be a representation in $\Pi_{M,B}$, so that $\theta_{\psi_0,\mu}(\pi)$ belongs to $\Pi_{M,B}$ also.  We may find:
\vskip 5pt

\begin{enumerate}
\item a number field $F$ with at least one real place $v_{\infty}$ and such that $F_{v_0} = k_0$ for some finite place $v_0$ of $F$;
\vskip 5pt

\item an additive character $\Psi$ of $\mathbb{A}_F/F$ such that $\Psi_{v_0} = \psi_0$;
\vskip 5pt

\item a quadratic extension $E$ of $F$ such that $E_{v_{\infty}} \cong \CC$ and $E_{v_0} \cong k$;
\vskip 5pt
\item a trace zero element $\Delta \in E$ such that $\Delta_v = \delta$;
\vskip 5pt

\item an idele class character $\Sigma$ of $E^{\times}$ such that $\Sigma_{v_0} = \mu$ and $\Sigma|_{\mathbb{A}_F^{\times}} = \omega_{E/F}$;
\vskip 5pt

\item a global quaternion algebra $\mathbb{B}$ over $F$ ramified precisely at $\{ v_{\infty}, v_0 \}$, so that  $\mathbb{B}_{v_0} = B$; this gives a hermitian space $V_{\mathbb{B}}$ over $F$ which is isomorphic to $V_B$ over $F_{v_0}$;
\vskip 5pt

\item a cuspidal representation $\Pi$ of $\U(V_{\mathbb{B}})$ such that 
\vskip 5pt

(a) $\Pi_{v_0} = \pi$;
\vskip 5pt

(b) $\Pi$ belongs to a global endoscopic packet;
\vskip 5pt

(c) $L(BC_{E/F}(\Pi) \otimes \Sigma^{-2}, 1/2) \ne 0$ 
\end{enumerate} 
 \vskip 5pt
 
 In particular, it follows that the set
 \[  S = \{ v: \epsilon( BC_{E_v/F_v}(\Pi_v) \otimes \Sigma_v, \psi)  = -1 \} \]
 has even cardinality and does not contain the place $v_0$. 
 Let $\mathbb{B}'$ be the quaternion algebra over $F$ such that 
 \[ \epsilon( \mathbb{B}'_v) \ne \epsilon(\mathbb{B}_v) \Longleftrightarrow v \in S. \]
 In other words, $\mathbb{B}'$ is obtained from $\mathbb{B}$ by switching the local 
 invariants of $\mathbb{B}$ at the set $S$. Since $v_0 \notin S$, we have
 \[  \mathbb{B}'_{v_0} \cong  B. \]
 Moreover, for each place $v$ of $F$, 
 \[  \Theta_{\Psi_v, \Sigma_v, V_{\mathbb{B}_v}, W_{\mathbb{B}'_v}}(\Pi_v) \ne 0. \] 
 By [Ha], The non-vanishing of the central $L$-value above implies that the global theta lift is non-vanishing as well:
 \[  \Theta_{\Psi,\Sigma,   V_{\mathbb{B}}, W_{\mathbb{B}'}}(\Pi) \ne 0.\]
  \vskip 10pt
  
  Now the assertion of the theorem has been checked for all finite places of $F$ outside $v_0$, since at least one of $\mathbb{B}_v$ or $\mathbb{B}'_v$ is split at any $v \ne v_0$. At the archimedean places,  the result of the theorem is also known (c.f. [Pa] for example). If the result of the theorem is not true at the place $v_0$, we would have a cuspidal representation $\Theta_{\Psi,\Sigma,   V_{\mathbb{B}}, W_{\mathbb{B}'}}(\Pi)$ of $\U(W_{\mathbb{B}'})$ which violates the Langlands-Arthur multiplicity formula for global endoscopic packets of $\U(2)$. This gives the desired contradiction.
  \vskip 10pt
  
  For example, suppose that $S$ is empty so that $\mathbb{B} = \mathbb{B}'$. Then if the result of the theorem holds at all $v \ne v_0$ but fails at $v_0$, the cuspidal representation
   $\Theta_{\Psi,\Sigma,   V_{\mathbb{B}}, W_{\mathbb{B}'}}(\Pi)$ of $\U(W_{\mathbb{B}'})$ would differ from the cuspidal representation $\Pi$ at an odd number of places $v$. This is a contradiction.
  
 \end{proof}
 
 \vskip 15pt

\section{Trilinear forms for $\U(2)$}  \label{S:trilinear}

In this section, we return to the skew-hermitian case of [GGP, Conjecture 16.3]. In particular,
we consider the case when
\[  W \subseteq V \quad \text{with $\dim W = \dim V = 2$.} \]
\vskip 10pt

Thus, let $W_{B, \delta}$ be the rank $2$ skew-hermitian case obtained from $V_B$ by scaling by $\delta$.
Fix an additive character $\psi_0$ of $k_0$, and a character $\mu$ of $k^{\times}$ so that 
\[  \mu|_{k_0^{\times}} = \omega_{k/k_0}. \]
This determines Weil representations $\omega_{\psi_0, \mu}$ for $\U(W_B)$.  
Given two conjugate-symplectic representations $M$ and $N$ of dimension $2$, with corresponding Vogan packet $\Pi_M$ and $\Pi_N$, we are interested in computing
\[ \Hom_{\U(W_B)}(\pi_M \otimes \pi_N \otimes \overline{\omega_{\psi, \mu}}, \CC) \]
as $\pi_M$ and $\pi_N$ vary over all representations in $\Pi_M$ and $\Pi_N$. 
\vskip 10pt

Note that the representation $\omega_{\psi_0,\mu}$ is not an irreducible representation of $\U(W_B)$. However, we may decompose $\omega_{\psi_0 ,\mu}$ according to central characters
\[  \omega_{\psi_0,\mu} = \bigoplus_{\chi} \omega_{\psi_0, \mu}[\chi] \]
 as $\chi$ runs over characters of $Z_{\U(W_B)} \cong k^{\times}/k_0^{\times}$. In fact, this decomposition is simply the decomposition of the Weil representation for the dual pair 
 $\U(V_1)  \times \U(W_B)$. Thus, each summand $\omega_{\psi_0,\mu}[\chi]$ is an irreducible
 representation of $\U(W_B)$. Moreover, it belongs to an endoscopic packet of $\U(W_B)$ constructed in Proposition \ref{P:endo-U2}.
 \vskip 10pt
 
 Now, because of central character reasons, it is clear that
 \[   \Hom_{\U(W_B)}(\pi_M \otimes \pi_N \otimes \overline{\omega_{\psi_0, \mu}[\chi]}, \CC) = 0 \]
 unless
 \[   \det M \cdot \det N  = \chi. \]
 For this $\chi$, we have
 \[   \Hom_{\U(W_B)}(\pi_M \otimes \pi_N \otimes \overline{\omega_{\psi_0, \mu}}, \CC) 
 =  \Hom_{\U(W_B)}(\pi_M \otimes \pi_N \otimes \overline{\omega_{\psi_0, \mu}[\chi]}, \CC). \]
 In particular, [GGP, Conjecture 16.3] amounts to a question about invariant trilinear forms on $\U(W_B)$.
  \vskip 10pt

 Given that the group $\U(W_B)$ can be described in terms of $\GL_2(k_0)$ and its inner form, 
 we shall see this question can be related to a question about invariant trilinear forms for $\GL_2$. This has been addressed in a series of paper by the third author [P1,2,6]; we recall his result here:
 \vskip 10pt
 
 \begin{thm} \label{T:trilinear}
 Let $N_1$, $N_2$ and $N_3$ be 2-dimensional representations of $WD(k_0)$, with associated representations  $\pi_{i,B}$ of $B^{\times}$.
 Assume that $\det N_1 \cdot \det N_2 \cdot \det N_3  =1$. Then
 \[   \sum_B   \dim \Hom_{B^{\times}}(\pi_{1,B} \otimes \pi_{2,B} \otimes \pi_{3,B}, \CC) =1. \]
  Moreover, 
  \[   \Hom_{B^{\times}}(\pi_{1,B} \otimes \pi_{2,B} \otimes \pi_{3,B}, \CC) \ne 0 \Longleftrightarrow 
  \epsilon(N_1 \otimes N_2 \otimes N_3, \psi) = \epsilon(B). \]
    \end{thm}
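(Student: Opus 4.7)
The plan is to treat the trilinear form via two complementary techniques: Mackey theory (when at least one $\pi_{i,B}$ is a principal series) and the theta correspondence (for genuinely supercuspidal cases). The key geometric input is the accidental isomorphism
$$\GSO(V_B) \cong (B^\times \times B^\times)/\Delta k_0^\times,$$
where $V_B$ is the 4-dimensional quadratic $k_0$-space of trivial discriminant whose Hasse invariant matches $B$. Under this isomorphism, the diagonal embedding $B^\times \hookrightarrow B^\times \times B^\times$ is identified with the stabilizer in $\GSO(V_B)$ of a suitable orthogonal decomposition $V_B = V^+ \oplus V^-$ with $\dim V^\pm = 2$, so that a $B^\times$-invariant trilinear form is naturally related to theta lifts involving $V_B$.

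For the principal series case, assume $B$ is split and (say) $\pi_{1,B} = \Ind_P^{\GL_2} \chi$. Frobenius reciprocity together with the Bruhat decomposition reduces the problem to a computation of Hom-spaces of $(\pi_{2,B} \otimes \pi_{3,B})|_T$ over a split torus $T$ and of $\Hom_{k^\times}(\pi_{2,B} \otimes \pi_{3,B}, \CC)$, which in turn is controlled by Tunnell-Saito-type root numbers. For the genuinely supercuspidal case, set up the see-saw
$$\xymatrix{\GO(V_B) \ar@{{}-{}}[dd] & \GSp_4 \ar@{{}-{}}[dd] \\ {} & {} \\ \GO(V^+) \times \GO(V^-) & \Delta \GSp_2}$$
and use the see-saw identity to express the trilinear Hom-space as a Hom-space between $\pi_{3,B}$ (on $\Delta \GSp_2$) and the big theta lift of $\pi_{1,B} \boxtimes \pi_{2,B}$ from $\GO(V_B)$ to $\GSp_4$. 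The existence and non-vanishing then follows from local theta dichotomy for the dual pair $(\GO_4, \GSp_4)$, which selects exactly one quaternion algebra $B$ for which the lift is nonzero, governed by a local root number. The sum formula $\sum_B \dim \Hom = 1$ is a byproduct of the same dichotomy combined with uniqueness of Whittaker (or Bessel) models for the theta lift on $\GSp_4$.

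The main obstacle is the precise matching of epsilon factors on the two sides. On the theta side, the dichotomy involves a local root number attached to the $\GO_4 \times \GSp_4$ see-saw; on the Galois side, the prediction is $\epsilon(N_1 \otimes N_2 \otimes N_3, \psi)$. Identifying these two quantities demands careful tracking of the splittings of the metaplectic cover (the normalization data $(\psi_0, \mu)$) and the behavior of the Weil representation under the see-saw. A secondary difficulty arises when one of the $N_i$ is exceptional supercuspidal (neither principal series nor dihedral), where one may need to invoke a global argument, factorizing the global triple product epsilon factor via the Jacquet-Langlands correspondence to confirm the local statement from knowledge at almost all other places.
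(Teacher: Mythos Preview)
The paper does not prove this theorem. It is stated as a known result of the third author and cited to [P1, P2, P6]; the sentence introducing it is ``This has been addressed in a series of papers by the third author [P1,2,6]; we recall his result here.'' So there is no proof in the paper to compare against.

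That said, your outline is broadly faithful to what is actually done in those references. The reduction of the principal-series case by Frobenius reciprocity and Mackey theory to a torus period governed by Tunnell--Saito is exactly the approach of [P1, \S3--4]. The supercuspidal case in [P1] does use the accidental isomorphism $\GSO(V_B)\cong (B^\times\times B^\times)/\Delta k_0^\times$ and a see-saw of the shape you describe, reducing to theta dichotomy for $(\O_4,\Sp_4)$ and a Whittaker computation on the $\Sp_4$ side. The global argument you allude to for the residual (non-dihedral) supercuspidal case is precisely the content of [P6]. One caution: the ``secondary difficulty'' you flag---matching the epsilon factor arising from dichotomy with $\epsilon(N_1\otimes N_2\otimes N_3,\psi)$---is the heart of the matter in [P1], and your sketch does not indicate how you would actually carry this out beyond saying it ``demands careful tracking.'' In Prasad's argument this is done by an explicit computation with the Weil representation restricted to the diagonal $\GL_2$, not by abstract bookkeeping of splittings; if you intend to fill in this step you should be prepared to do that computation.
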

  \vskip 10pt
  
  To apply this theorem to the case of $\U(W_B)$,  we need to consider the group $(B^{\times})^+$ and 
  calculate 
$${\rm dim}{\rm Hom}_{(B^{\times})^+}(\pi_1 \otimes \pi_2 \otimes \pi_3,  \CC) .
$$ 
\vskip 10pt

More generally,  let $G$ be a subgroup of $\GL_2(k_0)$ containing $\SL_2(k_0)$. The group 
$G$ is uniquely determined by the subgroup 
\[  k^{\times}_G \subset k_0^\times \]
consisting of determinants of elements of $G$. Thus, for any quaternion algebra $B$, it makes sense to define a 
corresponding subgroup $G_B$ inside $B^\times$ containing $\SL_1(B)$.
Restricting representations of $B^{\times}$ to $G_B$, 
one gets a notion of $L$-packet of representations of $G_B$.
It is known that representations of $\GL_2(k_0)$ restrict to $G$ with multiplicity 1, but
this need not be the case for representations of $B^\times$ if $B$ is non-split. 
For a representation $\pi_B$ of $G_B$, let $m(\pi_B)$ denote the multiplicity
with which it appears in the restriction of an irreducible  representation of $B^\times$.   
\vskip 10pt

Now we have:
\begin{thm}  \label{T:trilinear2}
For $i =1$, $2$ and $3$, let $N_i$ be a 2-dimensional representation of $WD(k_0)$ with associated representation $\tilde{\pi}_{B,i}$ of $B^{\times}$. Assume that $\prod_i \det N_i  =1$.
Then
\[  \sum_B  \dim \Hom_{G_B}(\tilde{\pi}_{B,1} \otimes \tilde{\pi}_{B,2} \otimes \tilde{\pi}_{B,3}, \CC) 
= \# (k_0^\times/k_0^{\times 2}k^\times_{G}). \]
In particular, 
\[
\sum_B \sum_{\pi_{B,1}, \pi_{B,2}, \pi_{B,3}} m(\pi_{B,1}) \cdot m(\pi_{B,2}) \cdot m(\pi_{B,3}) \cdot \dim \Hom_{G_B}(\pi_{B,1} \otimes \pi_{B,2} \otimes \pi_{B,3}, \CC) \]
is equal to
\[  \# (k_0^\times/k_0^{\times 2}k^\times_{G}), \]
where the inner sum is taken over irreducible representations $\pi_{B,i}$ 
of $G_B$ which are contained in
the representations $\tilde{\pi}_i$ of $B^{\times}$.
\end{thm}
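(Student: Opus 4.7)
The plan is to reduce the theorem to Theorem \ref{T:trilinear} by a character-twisting argument that exploits the fact that $G_B$ is normal in $B^{\times}$ with quotient $Q = B^{\times}/G_B \cong k_0^{\times}/k^{\times}_G$, the isomorphism being induced by the reduced norm (which is surjective on $B^{\times}$ for both split and non-split $B$).

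First, I would observe that because $G_B \subset B^{\times}$ is normal, the group $Q$ acts on the (finite-dimensional) space $\Hom_{G_B}(\tilde{\pi}_1 \otimes \tilde{\pi}_2 \otimes \tilde{\pi}_3, \CC)$ via $(q \cdot \phi)(v_1 \otimes v_2 \otimes v_3) = \phi(qv_1 \otimes qv_2 \otimes qv_3)$, and this action decomposes the space into isotypic components indexed by characters of $Q$. A trilinear form on which $Q$ acts by a character $\chi$ is the same data as a $B^{\times}$-invariant trilinear form on the twisted triple $\tilde{\pi}_1 \otimes (\tilde{\pi}_2 \otimes \chi^{-1}) \otimes \tilde{\pi}_3$, where $\chi^{-1}$ is inflated to $B^{\times}$ via the reduced norm (so that the twist shifts only the $Q$-action and not the $G_B$-action). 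Consequently
$$\dim \Hom_{G_B}(\tilde{\pi}_1 \otimes \tilde{\pi}_2 \otimes \tilde{\pi}_3, \CC) = \sum_{\chi} \dim \Hom_{B^{\times}}(\tilde{\pi}_1 \otimes (\tilde{\pi}_2 \otimes \chi^{-1}) \otimes \tilde{\pi}_3, \CC),$$
the sum running over characters $\chi$ of $Q = k_0^{\times}/k^{\times}_G$.

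Next, I would cut down the effective range of $\chi$ using central characters. A $B^{\times}$-invariant trilinear form on the twisted triple exists only if the product of central characters is trivial; since the pullback of $\chi^{-1}$ via the reduced norm acts on the center $k_0^{\times} \hookrightarrow B^{\times}$ by $z \mapsto \chi^{-1}(z^2)$, the assumption $\prod_i \det N_i = 1$ forces $\chi^2 = 1$. Characters $\chi$ of $Q$ satisfying $\chi^2 = 1$ are in bijection with characters of $k_0^{\times}/(k_0^{\times 2}\, k^{\times}_G)$, so there are exactly $\#(k_0^{\times}/k_0^{\times 2}\, k^{\times}_G)$ of them, which is finite since $k_0^{\times}/k_0^{\times 2}$ is finite.

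Finally, for each such $\chi$, the twist of $N_2$ by $\chi$ (viewed as a character of $WD(k_0)$ through $k_0^{\times}$) preserves the determinant condition $\det N_1 \cdot \det(N_2 \otimes \chi) \cdot \det N_3 = \chi^2 = 1$, so Theorem \ref{T:trilinear} applies and yields $\sum_B \dim \Hom_{B^{\times}}(\tilde{\pi}_1 \otimes (\tilde{\pi}_2 \otimes \chi^{-1}) \otimes \tilde{\pi}_3, \CC) = 1$. Summing over the $\#(k_0^{\times}/k_0^{\times 2}\, k^{\times}_G)$ valid characters $\chi$ gives the first assertion; the second then follows by decomposing each $\tilde{\pi}_{B,i}|_{G_B}$ into irreducibles with multiplicities $m(\pi_{B,i})$ and using the trilinearity of $\Hom$. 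The step I expect to require the most care is the first one: verifying cleanly that the $\chi$-isotypic $G_B$-invariant forms correspond to $B^{\times}$-invariant forms on the appropriately twisted triple, with the correct normalization of the $Q$-action against the pullback through the reduced norm (in particular, confirming that twisting only the middle factor suffices and that the central-character bookkeeping lines up as $\chi^2$).
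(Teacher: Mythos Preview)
Your proposal is correct and follows essentially the same route as the paper's proof: decompose $\Hom_{G_B}(\tilde{\pi}_1\otimes\tilde{\pi}_2\otimes\tilde{\pi}_3,\CC)$ under the action of $B^{\times}/G_B\cong k_0^{\times}/k_G^{\times}$, identify each $\chi$-isotypic piece with a $B^{\times}$-invariant trilinear form on a twist, and then invoke Theorem~\ref{T:trilinear} term by term. The paper does exactly this, writing the decomposition as
\[
\Hom_{G_B}(\tilde{\pi}_{B,1}\otimes\tilde{\pi}_{B,2}\otimes\tilde{\pi}_{B,3},\CC)\cong \bigoplus_{\chi}\Hom_{B^{\times}}(\tilde{\pi}_{B,1}\otimes\tilde{\pi}_{B,2}\otimes\tilde{\pi}_{B,3},\CC_{\chi})
\]
(with $\CC_{\chi}$ the one-dimensional representation $\chi\circ\mathbb{N}_B$), summing immediately over $\chi:k_0^{\times}/k_G^{\times}\to\Z/2\Z$ and absorbing $\chi$ into one of the factors. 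Your version is in fact tidier: you first sum over \emph{all} characters of the quotient and then use the central-character constraint to see that only those with $\chi^2=1$ contribute, making explicit a step the paper passes over silently. Your concern about the first step (matching the $\chi$-isotypic component with the twisted $B^{\times}$-Hom space, and the reduced-norm bookkeeping giving $\chi^2$ on the center) is exactly the right place to be careful, but it goes through just as you outline.
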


\begin{proof}
Clearly,
$${\rm Hom}_{G_B}(\tilde{\pi}_{B,1} \otimes \tilde{\pi}_{B,2} \otimes \tilde{\pi}_{B,3}, \CC) 
\cong \sum_{\chi:k_0^\times/k^\times_G \rightarrow {\Bbb Z}/2} {\rm Hom}_{G_B}(\tilde{\pi}_{B,1} \otimes \tilde{\pi}_{B,2} \otimes \tilde{\pi}_{B,3}, \CC_\chi), $$
where the $\chi$'s range over characters of $B^{\times}$ trivial on $G_B$, 
and $\CC_{\chi}$ 
denotes the 1-dimensional representation $\chi \circ \mathbb{N}_B$ of $B^{\times}$.  
But by Theorem \ref{T:trilinear}, we have
\[
\sum_B \dim \Hom_{G_B}(\tilde{\pi}_{B,1} \otimes \tilde{\pi}_{B,2} \otimes \tilde{\pi}_{B,3}, 
\CC_\chi)  = 1, \]
for all characters $\chi$ of order $\leq 2$ (by absorbing $\chi$ in one of the 
$\pi_i$'s). Adding up the contribution of the  various $\chi$'s, we get the conclusion
of the theorem.

\end{proof}
\vskip 10pt

Specializing this theorem to the case $G_B = (B^{\times})^+$ and noting that,  in this case, $m(\pi_{B,i}) = 1$ for each $B$, we obtain:
\vskip 5pt

\begin{cor} 
In the context of Theorem \ref{T:trilinear2},  let $G= \GL_2(k_0)^+$. Then one has,
\[   
\sum_B \sum_{\pi_{B,1}, \pi_{B,2}, \pi_{B,3}} \dim \Hom_{G_B}(\pi_{B,1} \otimes \pi_{B,2} \otimes \pi_{B,3}, \CC)
  = 2, \]
 where the inner sum is taken over irreducible representations $\pi_{B,i}$
of $G_B$  which are contained in
the representations $\tilde{\pi}_{B,i}$ of $B^{\times}$.

\end{cor}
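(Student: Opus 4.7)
The plan is to deduce the corollary as a direct specialization of Theorem~\ref{T:trilinear2} to the subgroup $G = \GL_2(k_0)^+$, which corresponds to the subgroup $k_G^\times = \mathbb{N}k^\times \subset k_0^\times$. By definition, $\GL_2(k_0)^+$ consists of matrices with determinant in $\mathbb{N}k^\times$, and similarly for the non-split form. So the formula to evaluate is
\[
\sum_B \sum_{\pi_{B,i}} m(\pi_{B,1}) m(\pi_{B,2}) m(\pi_{B,3}) \dim \Hom_{G_B}(\pi_{B,1} \otimes \pi_{B,2} \otimes \pi_{B,3}, \CC) = \#(k_0^\times/k_0^{\times 2} \mathbb{N}k^\times).
\]

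The first step is to compute the index on the right. Since $a^2 = \mathbb{N}_{k/k_0}(a)$ for any $a \in k_0^\times$, we have the containment $k_0^{\times 2} \subset \mathbb{N}k^\times$, so $k_0^{\times 2} \cdot \mathbb{N}k^\times = \mathbb{N}k^\times$. Because $k/k_0$ is a quadratic extension, local class field theory (or equivalently the nontriviality of $\omega_{k/k_0}$) gives $[k_0^\times : \mathbb{N}k^\times] = 2$. Hence the right hand side is exactly $2$.

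The second step is to verify that every multiplicity $m(\pi_{B,i})$ equals $1$, so that the multiplicities drop out of the sum. Since $(B^\times)^+$ is a normal subgroup of $B^\times$ of index $2$, Clifford theory implies that the restriction of any irreducible representation $\tilde{\pi}_{B,i}$ of $B^\times$ to $(B^\times)^+$ is either irreducible or a direct sum of two inequivalent irreducibles; in either case each constituent appears with multiplicity one. Combining this with the first step gives the stated equality with value~$2$. The whole argument is essentially bookkeeping; no substantial new input is needed beyond Theorem~\ref{T:trilinear2} and the fact that squares are norms, so there is no real obstacle to overcome.
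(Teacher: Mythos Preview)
Your proposal is correct and follows essentially the same approach as the paper: both deduce the corollary by specializing Theorem~\ref{T:trilinear2} to $G_B=(B^\times)^+$ and invoking the multiplicity-one statement $m(\pi_{B,i})=1$. You supply the extra details (computing $\#(k_0^\times/k_0^{\times 2}\mathbb{N}k^\times)=2$ via $k_0^{\times 2}\subset\mathbb{N}k^\times$, and justifying multiplicity one by Clifford theory for an index-two normal subgroup), whereas the paper simply asserts these facts.
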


\vskip 10pt

We can now apply the corollary to the group $\GU^+(W_B)$ or equivalently $\U(W_B)$. 
\vskip 5pt

 \begin{cor}
 Let $M_i$ be conjugate-symplectic representations of $WD(k)$ with associated $L$-packet $\Pi_{M_i,B}$ of $\U(W_B)$.  Assume that $\det M_1 \cdot \det M_2 \cdot \det M_3 = 1$. Then 
 \vskip 5pt
 
 \noindent (i) 
 \[  \sum_B \sum_{\pi_i \in \Pi_{M_i,B}}
\dim \Hom_{\U(W_B)}(\pi_1 \otimes \pi_2 \otimes \pi_3, \CC) 
  = 2. \]
\vskip 5pt

\noindent (ii) If one of the $M_i$'s, say $M_1$,  is dihedral with respect 
to $k/k_0$, so that 
$\# \Pi_{M_1,B_0} =2$ for $B_0$ split, then  
$$\dim \Hom_{\U(W_B)}({\pi}_1 \otimes {\pi}_2 \otimes {\pi}_3,  \CC)  \leq 1$$
for each $B$. If the above Hom space is nonzero, then
$$\dim \Hom_{\U(W_{B'})}({\pi}'_1 \otimes {\pi}'_2 \otimes {\pi}'_3,  \CC)   =0 $$ 
for $B' \ne B$. 
 \end{cor}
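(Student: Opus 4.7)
The strategy is to reduce the Hom problem on $\U(W_B)$ to one on $(B^\times)^+$, and then to apply the Corollary following Theorem~\ref{T:trilinear2}. Each $\pi_i \in \Pi_{M_i, B}$ is the restriction to $\U(W_B)$ of an irreducible representation $\widetilde{\pi}_i = \tau_i^+ \boxtimes \chi_i$ of $\GU^+(W_B) = ((B^\times)^+ \times k^\times)/\Delta k_0^\times$, where $(\tau_i, \chi_i)$ is the pair attached to $M_i$ by Proposition~\ref{P:conj-sym} and $\tau_i^+$ is an irreducible summand of $\tau_i|_{(B^\times)^+}$. Using the twisting freedom $(\tau_i, \chi_i) \mapsto (\tau_i \otimes \mu_i^{-1}\circ\det,\ \chi_i \cdot \mu_i \circ \mathbb{N})$ supplied by Proposition~\ref{P:conj-sym}(ii), the hypothesis $\det M_1 \cdot \det M_2 \cdot \det M_3 = 1$ will let us normalize so that $\chi_1 \chi_2 \chi_3 = 1$ on $k^\times$; the central $k^\times$-factor of $\GU^+(W_B)$ then acts trivially on the triple tensor product, and one obtains the identification
\[
  \Hom_{\U(W_B)}(\pi_1 \otimes \pi_2 \otimes \pi_3, \CC) \;=\; \Hom_{(B^\times)^+}(\tau_1^+ \otimes \tau_2^+ \otimes \tau_3^+, \CC).
\]
Summing this over $B$ and over $\pi_i \in \Pi_{M_i, B}$---equivalently, over all irreducible summands $\tau_i^+$ of $\tau_i|_{(B^\times)^+}$---and applying the Corollary following Theorem~\ref{T:trilinear2} with $G = \GL_2(k_0)^+$, for which $\#(k_0^\times / k_0^{\times 2}\mathbb{N} k^\times) = 2$, yields part~(i).

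For part~(ii), assume $M_1$ is dihedral with respect to $k/k_0$, so that $\tau_1 \otimes \omega_B \cong \tau_1$ as $B^\times$-modules, where $\omega_B = \omega_{k/k_0} \circ \mathbb{N}_B$ is the unique nontrivial character of $B^\times/(B^\times)^+$. The decomposition
\[
  \Hom_{(B^\times)^+}(\tau_1 \otimes \tau_2 \otimes \tau_3, \CC) \;=\; \Hom_{B^\times}(\tau_1 \otimes \tau_2 \otimes \tau_3, \CC) \;\oplus\; \Hom_{B^\times}(\tau_1 \otimes \tau_2 \otimes \tau_3, \omega_B),
\]
combined with $\tau_1 \otimes \omega_B \cong \tau_1$ (used to absorb the twist in the second summand), will show that both summands have the same dimension, equal to $\dim \Hom_{B^\times}(\tau_1 \otimes \tau_2 \otimes \tau_3, \CC)$. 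By Theorem~\ref{T:trilinear} this quantity is nonzero on a single $B$, namely the one determined by the root number condition $\epsilon(N_1 \otimes N_2 \otimes N_3, \psi) = \epsilon(B)$. Hence the total $2$ localizes on this single $B$ and vanishes on the other quaternion algebra $B'$; this is statement (b) of~(ii).

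For the upper bound in~(a), the key observation will be that conjugation by any $g \in B^\times \setminus (B^\times)^+$ defines an involution on the collection of individual Hom spaces $\Hom_{(B^\times)^+}(\tau_1^{\epsilon_1} \otimes \tau_2^{\epsilon_2} \otimes \tau_3^{\epsilon_3}, \CC)$: it swaps $\tau_i^+ \leftrightarrow \tau_i^-$ for each dihedral $\tau_i$ and preserves (up to isomorphism) each non-dihedral $\tau_i$. This pairs the individual Hom spaces into orbits of size~$2$ with matching dimensions, and since the total on the good~$B$ equals~$2$, exactly one orbit is nonzero, each of its two members having dimension~$1$. Consequently every individual Hom has dimension $\leq 1$. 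I expect the main subtlety to lie in the central-character normalization in the reduction step, which requires careful use of the twisting freedom in Proposition~\ref{P:conj-sym}; once this reduction is in place, parts~(i) and~(ii) follow from the trilinear form theorems together with a clean counting argument powered by the dihedral twist invariance and the $g$-conjugation symmetry.
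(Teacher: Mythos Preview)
Your proposal is correct and follows essentially the same route as the paper. For (i) you spell out explicitly the reduction from $\U(W_B)$ to $(B^\times)^+$ via the central-character normalization $\chi_1\chi_2\chi_3=1$ (which the paper subsumes under ``the definition of $L$-packets for $\U(W)$''), and then invoke the preceding Corollary; this is exactly the paper's argument with details filled in. For (ii) your key mechanism---conjugation by an element of $B^\times\smallsetminus(B^\times)^+$ (equivalently $\GU(W_B)\smallsetminus\GU^+(W_B)$) to pair a nonzero Hom with a second distinct nonzero Hom, forcing both to have dimension~$1$ by the total count~$2$---is identical to the paper's. The only difference is organizational: the paper runs the conjugation argument directly against the global bound~$2$ from part~(i), whereas you first invoke Theorem~\ref{T:trilinear} to localize the total~$2$ to a single~$B$ and then pair up on that~$B$. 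Both are valid; the paper's version is slightly shorter since it does not need to appeal to Theorem~\ref{T:trilinear} separately.
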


\begin{proof} 
The first assertion follows immediately from the previous corollary and the definition of $L$-packets for $\U(W)$ given in \S \ref{S:LV}. To deduce the last assertion, note that
if 
\[ \Hom_{\U(W_B)}(\pi_1 \otimes {\pi}_2 \otimes {\pi}_3, \CC) \not = 0, \]
then we also have
\[   \Hom_{\U(W_B)}(\pi_1^c \otimes \pi^c_2 \otimes \pi^c_3, \CC) \ne 0, \]
where $\pi^c_i$  denotes the 
conjugate of $\pi_i$ by an element $c \in \GU(W_B) \smallsetminus \GU^+(W_B)$.   
Since 
\[ \dim \Hom_{\U(W_B)}(\pi_1 \otimes {\pi}_2 \otimes {\pi}_3, \CC)  + \dim \Hom_{\U(W_B)}(\pi^c_1 \otimes \pi_2^c \otimes \pi_3^c, \CC) \leq 2, \]
 each of these dimensions must be equal to 1, and all other Hom spaces must be 0.
\end{proof}

\vskip 10pt

 \noindent{\bf Remark:} 
Since $k_0^\times/k_0^{\times 2}$ is a 2-group
whose cardinality can be made arbitrarily large by choosing $k_0$ 
appropriately, and since the $L$-packet of representations of $\SL_2(k)$
is bounded by 4 [LL], it follows that 
\[ 
{\rm dim}{\rm Hom}_{\SL_2(k)}({\pi}_1 \otimes {\pi}_2 \otimes {\pi}_3, {\Bbb C}) \]
 can be made arbitrarily large.
\vskip 15pt

Now we can return to [GGP, Conjecture 16.3], so that 
$M$ and $N$ are two 2-dimensional conjugate-symplectic representations of $WD(k)$ which determine Vogan packets $\Pi_M$ and $\Pi_N$ of $\U(W_{B})$. For a fixed additive character $\psi_0$ of $k_0$, we have obtained a bijection
\[  J(\psi_0): \Pi_M \longleftrightarrow \text{Irr}(A_M) \]
and similarly for $\Pi_N$. We are interested in computing 
\[  \Hom_{\U(W_B)}(\pi_M \otimes \pi_N \otimes \overline{\omega_{\psi_0,\mu}}) \]
for $\pi_M\in \Pi_M$ and $\pi_N \in \Pi_N$. 
\vskip 10pt

If $M$ and $N$ are non-dihedral (with respect to $k/k_0$), so that $\Pi_M$ and $\Pi_N$ both contain at most one representation of each $\U(W_B)$ (as $B$ varies), then [GGP, Conjecture 16.3] is a consequence of Theorem \ref{T:trilinear}. Indeed, we have
\[  A_M \times A_N = \Z/2\Z \times \Z/2\Z \]
and the distinguished character $\chi_0$ satisfies
\[  \chi_0(-1,1) = \chi_0(1,-1) = \epsilon(M \otimes N(\mu^{-1}),\psi) \]
for any character $\psi$ of $k/k_0$. 
On the other hand, if $\Pi_M$ is obtained by the restriction of the representation $\tau_M \boxtimes \chi_M$ of $\GU(W_B)$ and $\Pi_N$ is obtained from $\tau_N \boxtimes \chi_N$, then
the epsilon factor occurring in Theorem \ref{T:trilinear} is
\begin{align}
  &\epsilon( \rho_M \otimes  \rho_N \otimes \text{Ind} (\mu^{-1} \chi_M \chi_N), \psi_0) \notag \\
=& \epsilon ( \rho_M|_{WD(k)} \otimes \rho_N|_{WD(k)} \otimes \mu^{-1} \cdot \chi_M \cdot \chi_N, \psi_0({\rm Tr})) \notag \\
= &\epsilon(M \otimes N(\mu^{-1}), \psi_0({\rm Tr})) \notag \\
=& \epsilon(M \otimes N(\mu^{-1}),\psi). \notag
\end{align}
This verifies [GGP, Conjecture 16.3] in this case.
\vskip 10pt

When at least one of $M$ or $N$ is dihedral with respect to $k/k_0$, we may appeal to the  theta correspondence. 
Since the case when exactly one of them is dihedral with respect to $k/k_0$ is similar and easier, we shall give the details only when both $M$ and $N$ are dihedral with respect to $k/k_0$ Thus, let
\[  M = M_1 + M_2 \quad \text{and} \quad  N = N_1 + N_2, \]
with $M_i$ and $N_i$ conjugate-symplectic (not necessarily distinct), and write their component groups as
\[  A_M = \Z/2\Z e_1 \times \Z/2\Z e_2 \quad \text{and} \quad A_N = \Z/2\Z f_1 \times \Z/2\Z f_2. \]
In this case, the packet $\Pi_M$ can be obtained by theta correspondence from $\U(1)$. 
Set 
\[  \nu = M_1 \]
and 
\[  M_1 \cdot M_2 = \eta/\eta^{\sigma}, \]
for some character $\eta$ of $k^{\times}$.
If $L_a$ denote the rank $1$ hermitian space with discriminant $a$, then
\[  \Pi_M = \{  \theta_{\psi_0,\nu, W_{B,\delta}, L_a}(\eta|_{\U(L_a)}):  \quad a \in k_0^{\times}/\mathbb{N}k^{\times}, \, \epsilon(B) = \pm 1 \}. \] 
Relative to the additive character $\psi$ of $k/k_0$, we have the labelling
\[   \pi_{\rho_M} = \theta_{\psi_0,\nu, W_{B,\delta}, L_a}(\eta|_{\U(L_a)}) \]
if and only if
\[  \rho_M(e_1) = \epsilon(B) \cdot \omega_{k/k_0}(a) \quad \text{and} \quad \rho_M(e_2) = \omega_{k/k_0}(a). \]
Similarly, a representation in $\Pi_N$ has the form $\pi_{\rho_N}$, so that
\[  \rho_N(f_1) \cdot \rho_N(f_2) = \epsilon(B). \]
\vskip 10pt

Now consider the see-saw diagram

$$ \xymatrix{ \U(L_a+L_{-1})  \ar@{-}[dd]  & \U(W_B)\times \U(W_B) \ar@{-}[dd]\\
 & \\
 \U(L_a) \times \U(L_{-1})    & \Delta \U(W_B)}
 $$
and note that the rank 2 hermitian space $L_a + L_{-1}$ is isomorphic to $V_{B'}$
with $\epsilon(B') = \omega_{k/k_0}(a)$.
We start with the representation $\eta|_{\U(L_a)}$ of $\U(L_a)$, so that the representation we obtain on $\U(W_B)$ is precisely
\[  \pi_{\rho_M} = \theta_{\psi_0,\nu, W_{B,\delta}, L_a}(\eta|_{\U(L_a)}). \]
On the other side of the see-saw, we start with 
 the representation $\mu \cdot \nu \cdot \pi_{\rho_N}^{\vee}$ of $\U(W_B)$.
 Note that taking contragredient has the following effect on the Vogan parameterization: 
  for any character $\rho_N$ of $A_N$, the representation $\pi_{\rho_N}$ has Vogan parameter
  \[  (N^{\vee}, \rho_N \cdot \beta_0 ) \]
  where $\beta_0$ is the character of $A_{N^{\vee}} = A_N$ given by
  \[  \beta_0(b_i) = \omega_{k/k_0}(-1). \]
 \vskip 15pt

Now the see-saw identity gives:

\[
\Hom_{\U(W_B)}(\pi_{\rho_M} \otimes 
\omega_{\psi_0^{-1},\nu, W_B}, \mu \cdot \nu \cdot  \pi_{\rho_N}^{\vee})  \]
\[  = \Hom_{\U(L_a)}(\Theta_{\psi_0, \nu^2,  W_B, L_a+L_{-1}}(\mu \nu \pi_{\rho_N}^{\vee}), \eta|_{\U(L_a)}). \]
Since
\[  \omega_{\psi_0^{-1}, \nu, W_B} = \omega_{\psi_0^{-1}, \mu^{-1}, W_B} \otimes \mu \nu= \overline{\omega_{\psi_0,\mu}} \otimes \mu \nu,  \]
we see that the LHS of this identity is equal to the desired space
\[  \Hom_{\U(W_B)} (\pi_{\rho_M} \otimes \pi_{\rho_N} \otimes \overline{\omega_{\psi_0,\mu}},  \CC). \]
On the other hand, the RHS is nonzero if and only if conditions (a) and (b) below are satisfied: 
\vskip 5pt

\begin{enumerate}[(a)]
\item $\Theta_{\psi_0, \nu^2,  W_B, L_a+L_{-1}}(\mu \nu (\pi_{\rho_N})^{\vee} ) \ne 0$. According to Theorem \ref{T:theta-U2},  this holds 
 if and only if
\[  \epsilon(N^{\vee} \otimes \mu \nu \nu^{-2},\psi)  = \epsilon(B) \cdot  \omega_{k/k_0}(a), \]
or equivalently
\[  \epsilon(N \otimes M_1(\mu^{-1}), \psi) = \epsilon(B) \cdot \omega_{k/k_0}(a) = \rho_M(e_1). \]
If this is satisfied, then by Theorem \ref{T:theta-U2}, the  theta lift is equal to the representation
\[  \pi_{\rho_{N^{\vee}}} \cdot \mu \nu \]
of $\U(L_a+L_{-1})$,  with
 \[  \rho_{N^{\vee}}(f_i) = \rho_N(f_i) \cdot \omega_{k/k_0}(-1) \cdot  \epsilon(N_i \otimes M_1(\mu^{-1}), \psi_{-2}). \]
 \vskip 10pt

\item $\Hom_{\U(L_a)}(\pi_{\rho_{N^{\vee}}} \mu \nu , \eta/\eta^{\sigma}) \ne 0$. This is a branching problem for $\U(2) \times \U(1)$
 which we have resolved in \S \ref{S:U(2)xU(1)}.  Using the results there, we see that the desired non-vanishing holds  if and only if
\[ \rho_{N^{\vee}}(f_i) =  \rho_N(f_i)  \cdot \omega_{k/k_0}(-1) \cdot \epsilon(N_i \otimes M_1(\mu^{-1}), \psi_{-2}) = \epsilon(N_i \otimes M_2(\mu^{-1}), \psi_{2}) \]
or equivalently
 \[  \rho_N(f_i)  = \epsilon(N_1 \otimes M(\mu^{-1}), \psi_2) =  \epsilon(N_1 \otimes M(\mu^{-1}), \psi). 
 \]
 \end{enumerate}
 \vskip 10pt
 
 Finally, since 
 \[  \rho_M(-1) = \rho_N(-1) = \epsilon(B), \]
 we conclude that
 \[ \rho_M(e_2) =  \epsilon(N \otimes M_2(\mu^{-1}), \psi). \]
 Thus we conclude that 
 \[   \Hom_{\U(W_B)} (\pi_{\rho_M} \otimes \pi_{\rho_N} \otimes \overline{\omega_{\psi_0,\mu}}, \CC) \ne 0 \] 
if and only if $\rho_M \times \rho_N$ is the distinguished character $\chi_0$ of [GGP, Conjecture 16.3].

\vskip 15pt
\section{Restriction from $\U(3)$ to $\U(2)$: endoscopic case}
In this section, we consider the restriction problem for $\U(3) \times \U(2)$.
Using theta correspondence, we establish [GGP, Conjecture 16.3]  for endoscopic packets of 
$\U(3)$. In the following section, we shall consider the stable packets of $\U(3)$.
\vskip 10pt

We fix a pair
\[  W \subset V \]
of split hermitian spaces of dimension $2$ and $3$ respectively. Without loss of generality, we assume that $\disc V = 1$, so that $V/W$ is a rank $1$ hermitian space $L_1$  of discriminant $1$.  
Let $W' \subset V'$ be hermitian spaces of dimension $2$ and $3$, 
such that $V/W \cong V'/W'$. 
\vskip 10pt
 
More concretely, for each quaternion algebra $B$ over $k_0$, 
we have a rank 2 hermitian space $V_B$. Then the rank 3 hermitian space
\[  V_{B,b} = V_B +  L_b \]
has discriminant satisfying
\[  \omega_{k/k_0}(\disc (V_{B,b}))  = \epsilon(B) \cdot \omega_{k/k_0}(b). \] 
If we take $b = 1$, then as $B$ varies,  the pair
\[  V_B \subset V_{B,1} \]
gives the pairs $W \subset V$ and $W' \subset V'$.
\vskip 5pt
 
 Suppose first that $N$ is a 2-dimensional conjugate-symplectic representation of $WD(k)$ with associated Vogan packet $\Pi_{N}$ of $\U(V_B)$. If $N = \oplus_i N_i$,  then we write
 \[  A_N = \prod_i A_{N_i} = \prod_i \Z/2\Z f_i.\]
 For the fixed given additive character $\psi$ of $k/k_0$, we translate $\psi$ by $-2 \cdot \disc(V) = -2$ and use the resulting character $\psi_{-2}$ to fix the Vogan parameterization
 \[  J(\psi_{-2}): \Pi_N \longleftrightarrow \text{Irr}(A_N). \]
 \vskip 10pt

Now consider a 3-dimensional conjugate-orthogonal representation 
\[  M = M_1 + M_2 \]
with $\dim M_i = i$ and such that each $M_i$ is conjugate-orthogonal. Unless, $M \cong 3 M_1$, we may further assume that $M_1$ does not occur in $M_2$. We shall assume that this is the case, since the other case is similarly handled. Then
\[  A_M = A_{M_1} \times A_{M_2} \]
and we write:
\[  A_{M_1} = \Z/2\Z e \quad \text{and} \quad A_{M_2} = \prod_i \Z/2\Z e_i \]
if $M_2 = \oplus_i M_{2,i}$.

\vskip 5pt

Moreover, we shall assume that the conjugate-orthogonal character $M_1$ has a conjugate-symplectic square root. This can be achieved by twisting $M$, and since this twist can be absorbed into $N$ for the purpose of the restriction problem, there is no loss of generality in making this assumption on $M_1$. Under this assumption on $M_1$,
we have described in \S \ref{S:endo-theta} a construction of the Vogan packet
$\Pi_M$ as well as a bijection
\[  \Pi_M \longleftrightarrow  \text{Irr}(A_M) \]
which is canonical in this case (i.e. independent of the choice of any additive character).
To recall the construction briefly, we set
\[  M_1 = \mu^2 \]
for some conjugate-symplectic character $\mu$ and set
\[  N' = M_2 \cdot \mu, \]
so that $N'$ is conjugate-symplectic and $A_{N'} = A_{M_2}$. Then, for quaternion algebras $B$ and $B'$ over $k_0$, one considers the theta correspondence for 
\[  \U(W_{B'})  \times \U(V_{B,1}) \]
relative to the data $(\psi_{0,-2}, \mu)$, where $\psi_0$ is our fixed additive character of $k_0$.
The packet $\Pi_M$ is then the theta lift of the packet $\Pi_{N'}$ of $\U(W_{B'})$. For the labelling of the representations in $\Pi_M$ by $\text{Irr}(A_M)$, we refer the reader to the end of \S \ref{S:endo-theta}. 

\vskip 10pt

 Now we would like to determine
\[  \Hom_{\U(V_B)}(\pi_M \otimes \pi_N, \CC), \]
for $\pi_M \in \Pi_M$ and $\pi_N \in \Pi_N$.
We examine this restriction problem using the see-saw diagram


$$\xymatrix{ \U(V_B+L_1)   \ar@{-}[dd]  & \U(W_{B'}) \times \U(W_{B'}) \ar@{-}[dd]\\
 & \\
 \U(V_B) \times \U(L_1)    & \U(W_{B'}).
}$$

On $\U(W_{B'})$, we start with a representation $\pi_{N'}^{\eta} \in \Pi_{N'}$ indexed by a character $\eta$ of $A_{N'}$, so that
\[  \eta(-1) = \epsilon(B'). \]
On $\U(V_B)$, we start with a representation $\pi_{\rho_N}^{\vee}$ associated to a character $\rho_N$ of $A_N$, so that
\[  \rho_N(-1) = \epsilon(B). \]
Then we have the see-saw identity:

\[  \Hom_{\U(V_B)}(\Theta_{\psi_{0,-2},\mu}(\pi_{N'}^{\eta}) \otimes \pi_{\rho_N}, \CC)
  = \Hom_{\U(W_{B'})}(\Theta_{\psi_{0,-2}, \mu^2, V_B, W_{B'}}(\pi_{\rho_N}^{\vee}) \otimes \omega_{\psi_{0,-2}, \mu, L_1, W_{B'}}, \pi_{N'}^{\eta}). \]
\vskip 5pt

\noindent Note that 
\[  \pi_{\rho_M} =  \theta_{\psi_{0,-2},\mu}(\pi_{N'}^{\eta}) \]
 with 
 \[  \rho_M|_{A_{N'}} = \eta \quad \text{and} \quad \rho_M(e) = \epsilon(B') \cdot \eta(-1) = \epsilon(B) \cdot \epsilon(B'). \]  
Moreover, $\pi_{\rho_N}^{\vee}$ has Vogan parameter  (relative to $J(\psi_{0,-2})$)
\[  (N^{\vee}, \rho_{N^{\vee}}) = (N^{\vee}, \rho_N \cdot \beta_0) \]
 with 
\[  \beta_0(f_i) = \omega_{k/k_0}(-1). \]
Then the see-saw identity reads:

\[  \Hom_{\U(V_B)}(\pi_{\rho_M} \otimes \pi_{\rho_N}, \CC)
  = \Hom_{\U(W_{B'})}(\theta_{\psi_{0,-2}, \mu^2, V_B, W_{B'}}(\pi_{\rho_{N^{\vee}}}) \otimes \omega_{\psi_{0,-2}, \mu, W_{B'}}, \pi_{N'}^{\eta}). \]  
\vskip 5pt

The RHS is nonzero if and only if (i) and (ii) below hold.
\vskip 5pt
\begin{enumerate}[(i)]
\item $\theta_{\psi_{0,-2}, \mu^2, V_B, W_{B'}}(\pi_{\rho_{N^{\vee}}}) \ne 0$. By proposition \ref{P:harris}, this holds if and only if
\[  \epsilon(N^{\vee} \mu^{-2}, \psi_{-2}) = \epsilon(B) \cdot \epsilon(B') = \rho_M(e), \]
or equivalently
\[  \epsilon(N \otimes M_1, \psi) = \rho_M(e). \]
Moreover, by Theorem \ref{T:theta-U2}, when this holds, we have
\[ \theta_{\psi_0, \mu^2, V_B, W_{B'}}((\pi_{\rho_{N^{\vee}}})
= \pi_{\rho_{N^{\vee}} \cdot \rho_0} \]
where $\rho_0$ is the character of $A_{N^{\vee}} = A_N$ given by
\[ \rho_0(f_i) = \epsilon(N_i^{\vee} \mu^{-2},  \psi_{-1})  = \epsilon(N_i \otimes M_1, \psi). \]
\vskip 10pt

\item $\Hom_{\U(W_{B'}}(\pi_{\rho_{N^{\vee}} \cdot \rho_0}    \otimes \omega_{\psi_{0,-2}, \mu,  W_{B'}}, \pi_{N'}^{\eta}) \ne 0$. This question was addressed in the previous section, and we deduce that the desired non-vanishing holds if and only if
the character 
\[ ( \rho_N \cdot \rho_0 , \, \,   \eta) \in \text{Irr}(A_N) \times \text{Irr}(A_{N'}) \]
is the distinguished character $\chi_0$ in [GGP, Conjecture 16.3] for the skew-hermitian case for 
$(W_{B'}, \mu)$. More precisely, the desired non-vanishing holds if and only if 
\[  \rho_N(f_i)  \cdot  \epsilon(N_i \otimes M_1, \psi)  = \epsilon(N_i^{\vee} \otimes (N')^{\vee} (\mu), \psi_{-1}) = \epsilon( N_i \otimes M_2, \psi), \]
so that
\[  \rho_N(f_i) = \epsilon(N_i \otimes M, \psi), \] 
 and
\[  \eta(e_i) = \epsilon((N'_i)^{\vee} \otimes N^{\vee}(\mu), \psi_{-1}) = \epsilon(M_{2,i} \otimes N, \psi). \]
  \end{enumerate}
\vskip 5pt

This shows that 
\[  \Hom_{\U(V_B)}(\pi_{\rho_M} \otimes \pi_{\rho_N}, \CC) \ne 0 \]
if and only if the character $\rho_M \times \rho_N$ is the distinguished character $\chi_0$ of [GGP, Conjecture 16.3], computed using the additive character $\psi$ of $k/k_0$.

 \vskip 15pt

\section{Restriction from $\U(3)$ to $\U(2)$: stable case}

We now consider the restriction problem for stable Vogan packets of $\U(3)$. We preserve the notation of the previous sections. In particular, we have the pairs of spaces $W \subset V$ and $W'  \subset V'$, with $\disc (V)=1 = \disc (V/W)$. Moreover, we use the additive character $\psi_{-2}$ to normalize the Vogan parameterization for $\U(W)$.
\vskip 5pt

Let $M$ be an irreducible 3-dimensional conjugate-orthogonal representation of $WD(k)$, so that its associated Vogan packet has the form
\[  \Pi_M = \{ \pi_M, \pi'_M \}, \] 
where $\pi_M$ is a representation of $\U(V)$ and $\pi'_M$ is the same representation considered on $\U(V')$. 
If $M$ is an irreducible representation of the Weil group $W(k)$, then the representation $\pi_M$
is supercuspidal. Otherwise, 
\[  M = \mu \boxtimes S_3 \]
where $\mu$ is a conjugate-orthogonal character of $W(k)$ and $S_3$ denotes the irreducible 3-dimensional representation of $\SL_2(\CC)$. In this case, the representation $\pi_M$ is a twisted Steinberg representation
\[  \pi_M = St \otimes (\mu \circ \det). \]
\vskip 10pt

On the other hand, let $N$ be an arbitrary  2-dimensional conjugate-symplectic representation of $WD(k)$ with associated Vogan packet $\Pi_N$ of $\U(W)$. 
We would like to determine 
\[  \Hom_{\U(W)}(\pi_M \otimes \pi_N, \CC) \]
for $\pi_M \in \Pi_M$ and $\pi_N \in \Pi_N$. 
We shall reduce this question to the case when $\Pi_M$ and $\Pi_N$ are both supercuspidal packets, by first treating the other cases directly. The supercuspidal case will then be handled by a global method.
\vskip 10pt

We first consider the case when $M = \mu \boxtimes S_3$. 
Since we can absorb the twist by $\mu$ into the parameter $N$, we may assume without loss of generality that $\mu = 1$. In this case, $\pi_M = St$ is a quotient of a (un-normalized) principal series representation:

\[  
0 \longrightarrow 1 \longrightarrow {\rm Ind}_B^{\U(V)}(1) \longrightarrow
 St \longrightarrow 0.  \]

\noindent We have:
\vskip 5pt

\begin{prop}
(i) If $N$ is not the parameter of the Steinberg representation of 
$\U(W)$, we have
\[  \Hom_{\U(W)}(St  \otimes  \pi_{\rho_N}, \CC) = 
\Hom_{\U(W)}(I_B(1) \otimes  \pi_{\rho_N} , \CC) = \Hom_{\U(L)}( \pi_{\rho_N}  , \CC). \]
 In particular, $\Hom_{\U(W)}(St \otimes  \pi_{\rho_N}, \CC) \ne 0$ if and only if
 \[  \rho_N(f_i) = \epsilon(N_i, \psi) = \epsilon(N_i \otimes M, \psi). \]
 \vskip 5pt
 
 \noindent (ii) If $N$ is the parameter of the Steinberg representation of 
$\U(W)$, so that $\Pi_N = \{ St_{\U(W)}, 1_{\U(W')} \}$, we have
 \[  \Hom_{\U(W)}(St \otimes St_{\U(W)},\CC) = \Hom_{\U(W)} (I_B(1) \otimes St_{\U(W)}, \CC)  \ne 0. \]
 On the other hand,
 \[   \Hom_{\U(W')}(1_{\U(V')} , \CC)) + \Hom_{\U(W')}(St_{\U(V')}, \CC)  \]
 is equal to
 \[   \Hom_{\U(W')} ({\rm Ind}_{B'}^{\U(V')}(1), 1_{\U(W')}) =  \Hom_{\U(L')}(1_{\U(W')}, \CC) = \CC, \]
 so that
 \[  \Hom_{\U(W')}(St_{\U(V')}, \CC) = 0. \]
 \end{prop}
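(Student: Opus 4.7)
The plan is to apply $\Hom_{\U(W)}(-, \pi_{\rho_N}^\vee)$ to the short exact sequence
$$0 \to 1_{\U(V)} \to I_B(1) \to St \to 0$$
and to analyze $I_B(1)|_{\U(W)}$ via Mackey's theorem. The variety $\U(V)/B$ parameterizes isotropic lines in $V$, and $\U(W)$ has exactly two orbits on it: a closed orbit of isotropic lines contained in $W$, with stabilizer a Borel $B_W$ of $\U(W)$, and an open orbit of lines $k(w + x)$ with $x$ a fixed unit vector in $L_1 = W^\perp$ and $w \in W$ satisfying $\langle w, w \rangle = -1$, with stabilizer $\U(L)$ where $L = (kw)^\perp$ is the rank-one orthogonal complement of $w$ in $W$. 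This produces the short exact sequence of $\U(W)$-modules
$$0 \to \Sigma \to I_B(1)|_{\U(W)} \to {\rm ind}_{\U(L)}^{\U(W)}(1) \to 0,$$
where $\Sigma = {\rm Ind}_{B_W}^{\U(W)}(1)$ is a principal series whose irreducible constituents all lie in the degenerate $L$-packets of $\U(W)$ containing $St_{\U(W)}$ and $1_{\U(W)}$.

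For part (i), under the non-Steinberg assumption on $N$, both $\Hom_{\U(W)}(1_{\U(V)}|_{\U(W)}, \pi_{\rho_N}^\vee) = 0$ and $\Hom_{\U(W)}(\Sigma, \pi_{\rho_N}^\vee) = 0$; the long exact sequences then collapse to $\Hom(St, \pi_{\rho_N}^\vee) = \Hom(I_B(1), \pi_{\rho_N}^\vee)$ and, via Frobenius reciprocity applied to the open-orbit piece, to $\Hom_{\U(L)}(\pi_{\rho_N}, \CC)$. The resulting criterion $\rho_N(f_i) = \epsilon(N_i, \psi)$ is then a direct translation of the $\U(2) \times \U(1)$ branching law established in \S\ref{S:U(2)xU(1)}, while the identity $\epsilon(N_i, \psi) = \epsilon(N_i \otimes M, \psi)$ for $M = 1 \boxtimes S_3$ is a standard root number identity for tensoring with the principal $\SL_2(\CC)$-representation.

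For part (ii), the same short exact sequence argument gives the first equality since $\Hom_{\U(W)}(1_{\U(V)}, St_{\U(W)}^\vee) = 0$; non-vanishing then follows from the Mackey filtration together with $\Hom_{\U(L)}(St_{\U(W)}, \CC) \ne 0$, the latter a direct consequence of the $\U(2) \times \U(1)$ computation. For the non-split form, the decisive observation is that $W'$ is anisotropic and so contains no isotropic lines; the closed orbit disappears and $I_{B'}(1)|_{\U(W')} \cong {\rm ind}_{\U(L')}^{\U(W')}(1)$, whence $\Hom_{\U(W')}(I_{B'}(1), 1_{\U(W')}) = \Hom_{\U(L')}(1, 1) = \CC$. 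Both $\Hom_{\U(W')}(I_{B'}(1), 1_{\U(W')})$ and $\Hom_{\U(W')}(1_{\U(V')}, 1_{\U(W')})$ are thus one-dimensional, and the connecting map---restriction of the $\U(W')$-invariant ``integration over the open orbit'' functional to the subspace of constant functions $1_{\U(V')} \subset I_{B'}(1)$---is manifestly nonzero, hence an isomorphism. Exactness of the $\Hom$ sequence then forces $\Hom_{\U(W')}(St_{\U(V')}, 1_{\U(W')}) = 0$.

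The most delicate step is the modular-character bookkeeping in the Mackey filtration: one must verify that no twist by a half-density factor intrudes to change ${\rm ind}_{\U(L)}^{\U(W)}(1)$ or the precise inducing character of $\Sigma$, so that all irreducible constituents of $\Sigma$ remain confined to the Steinberg and trivial-representation packets of $\U(W)$. With this confirmed, the remainder of the argument reduces to formal manipulations of long exact sequences and Frobenius reciprocity.
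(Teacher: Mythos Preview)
Your approach via Mackey theory is precisely what the paper has in mind (the paper simply calls this ``a standard application of Mackey theory'' and invokes [GGP, Theorem 15.1] without further detail), and your orbit analysis on $\U(V)/B$ is correct. However, your Mackey filtration is oriented the wrong way: functions supported on the \emph{open} orbit form a submodule of $I_B(1)|_{\U(W)}$, while restriction to the \emph{closed} orbit gives the quotient. The correct sequence is
\[
0 \longrightarrow {\rm ind}_{\U(L)}^{\U(W)}(1) \longrightarrow I_B(1)|_{\U(W)} \longrightarrow \Sigma \longrightarrow 0,
\]
the reverse of what you wrote.

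This matters for part (i). With the correct orientation, applying $\Hom_{\U(W)}(-,\pi_{\rho_N}^\vee)$ yields
\[
0 \to \Hom(\Sigma, \pi_{\rho_N}^\vee) \to \Hom(I_B(1), \pi_{\rho_N}^\vee) \to \Hom_{\U(W)}({\rm ind}_{\U(L)}^{\U(W)}(1), \pi_{\rho_N}^\vee) \to {\rm Ext}^1_{\U(W)}(\Sigma, \pi_{\rho_N}^\vee),
\]
so to identify the middle term with $\Hom_{\U(L)}(\pi_{\rho_N},\CC)$ you need not only $\Hom(\Sigma, \pi_{\rho_N}^\vee)=0$ but also ${\rm Ext}^1_{\U(W)}(\Sigma, \pi_{\rho_N}^\vee)=0$. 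The latter does hold---since $N$ is not the Steinberg parameter, $\pi_{\rho_N}$ and the constituents of $\Sigma$ have distinct cuspidal supports, hence lie in different Bernstein blocks---but your argument as written omits this step because the reversed sequence makes it appear unnecessary. For the split half of part (ii), the correct orientation actually makes the non-vanishing more direct: $\Hom(\Sigma, St_{\U(W)}) \ne 0$ since $St_{\U(W)}$ is a quotient of $\Sigma = {\rm Ind}_{B_W}^{\U(W)}(1)$, and this injects into $\Hom(I_B(1), St_{\U(W)})$. Your treatment of the non-split form in (ii) is unaffected, since there the closed orbit is genuinely empty.
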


   \vskip 10pt
   
 This proposition verifies [GGP, Conjecture 16.3] when $M = \mu \otimes S_3$ and $N$ is arbitrary. We may thus restrict attention to the case when $M$ is an irreducible representation of $W(k)$, so that $\Pi_M$ is a stable supercuspidal packet.
 \vskip 5pt
 
 Next, we consider the case when 
 \[  N = P + (P^{\sigma})^{\vee} \quad \text{or} \quad \mu \otimes S_2, \]
 where $P$ and  $(P^{\sigma})^{\vee}$ are not necessarily distinct.
  In such cases, the associated representations of $\U(W)$ are contained in principal series representations of $\U(W)$. Thus, we need to compute:
 \[  \Hom_{\U(W)}(\pi_M, {\rm Ind}_B^{\U(W)}(\chi)) \]
 for a supercuspidal representation $\pi_M$ of $\U(V)$. By Frobenius reciprocity, we see that this is equal to
 \[  \Hom_T ((\pi_M)_Z , \chi)  \] 
 where the unipotent radical $Z$ of $B \subset \U(W)$ coincides with the center of the unipotent radical of a Borel subgroup of $\U(V)$. But  $(\pi_M)_Z$ is isomorphic to the regular representation $\mathcal{S}(k^{\times})$ of $T \cong k^{\times}$. Thus we have 
 \[
 \Hom_{\U(W)}(\pi_M, {\rm Ind}_B^{\U(W)}(\chi)) =   \Hom_{k^{\times}} (\mathcal{S}(k^{\times}) , \chi)   = \CC. \]
 \vskip 5pt
 
 This proposition is proved by a standard application of Mackey theory. Indeed, it is a special case of [GGP, Theorem 15.1], and so we omit its proof here.
 The main point to note is that the proposition verifies [GGP, Conjecture 16.3] when
 \[   N = P + (P^{\sigma})^{\vee} \quad \text{with $P \ne (P^{\sigma})^{\vee}$,} \]
 as the principal series representation on $\U(W)$ is irreducible.
 If  $P = (P^{\sigma})^{\vee}$, the parameter $N$ is dihedral with respect to $k/k_0$ and the corresponding principal series representation of $\U(W)$ is the sum of two irreducible summands, and we have not determined which of these summands contribute to the 1-dimensional Hom space above. 
 \vskip 10pt
 
 Finally, when $N = \mu \otimes S_2$, we may assume without loss of generality
 that $\mu=1$ (by absorbing $\mu$ into $M$). Then
 \[  \Pi_N = \{ St_{\U(W)}  , 1_{\U(W')} \}, \]
and
\[ 
0 \rightarrow 1_{\U(W)} \rightarrow {\rm Ind}_B^{\U(W)} 1 \rightarrow St_{\U(W)} 
\longrightarrow 0.  \]
The above computation shows that 
\[  \Hom_{\U(W)}(\pi_M, {\rm Ind}_B^{\U(W)} ) = \CC. \]
On the other hand, by [GRS], we have 
\[  \Hom_{\U(W)}(\pi_M, \CC) = 0 \]
and
\[  \Hom_{\U(W)}(\pi'_M, \CC) = 0. \]
Indeed, if these Hom spaces were not zero, $\pi_M$ and $\pi'_M$ would be obtainable as a theta lifting from some $\U(2)$, contradicting the fact that $M$ is a stable parameter of $\U(3)$. Thus, we conclude that
\[   \Hom_{\U(W)}(\pi_M, St_{\U(W)}) \ne 0, \]
 which is what [GGP, Conjecture 16.3] predicts.
 \vskip 10pt
 
 We are thus left with the case when $M$ is an irreducible representation of $W(k)$ and $N$ is a supercuspidal parameter.    
 For this remaining case, we shall prove the following local theorem by global methods.
\vskip 10pt

\begin{thm} Let $W$ be a 2 dimensional hermitian subspace
of a hermitian space $V$ of dimension 3 over $k$. 
Suppose that $\pi_M$ (resp. $\pi_N$) is an irreducible supercuspidal
representation of $\U(V)$ (resp. $\U(W)$) with irreducible Langlands 
parameter  $M$ (resp.
$N$) of $W(k)$. Then $\epsilon(M \otimes N, \psi)$ for a character $\psi$ of 
$k/k_0$ is independent of $\psi$, so may be denoted as $\epsilon(M \otimes N)$.
Suppose that ${\rm Hom}_{\U(W)}
(\pi_M \otimes \pi_N, \CC) \not = 0.$ Then 
\begin{eqnarray*} 
\epsilon (M\otimes N) & = & \left \{ \begin{matrix} 1 
& {\sf ~if~~} \U(V) \times \U(W) {\sf ~~is ~~quasi-split}\\ 
-1 & 
{\sf ~otherwise~~}   \end{matrix}\right.
\end{eqnarray*}
\end{thm}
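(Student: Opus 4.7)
The plan is to verify both assertions: the first by a root-number computation, the second by a globalization argument.

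For the first assertion, $M \otimes N$ is a six-dimensional conjugate-symplectic representation of $W(k)$, so $\epsilon(M \otimes N, \psi) \in \{\pm 1\}$. Under $\psi \mapsto \psi_a$ with $a \in k_0^\times$ the root number is multiplied by $\det(M \otimes N)(a)$, so it suffices to show that $\det(M \otimes N)|_{k_0^\times} = 1$. Writing $\det(M \otimes N) = (\det M)^2 \cdot (\det N)^3$, the first factor is trivial on $k_0^\times$ because $\det M$ is conjugate-orthogonal and hence quadratic on $k_0^\times$. For the second, the computation from the proof of Proposition~\ref{P:conj-sym}(i) shows $\det N|_{k_0^\times} = 1$: decomposing $N = \tau|_{WD(k)} \otimes \chi$ with $\omega_\tau \cdot \chi|_{k_0^\times} = 1$, one gets $\det N|_{k_0^\times} = (\omega_\tau \cdot \chi|_{k_0^\times})^2 = 1$. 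Since any two non-trivial characters of $k$ trivial on $k_0$ differ by an element of $k_0^\times$, the root number $\epsilon(M \otimes N, \psi)$ is indeed $\psi$-independent.

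For the main assertion, I would globalize. Choose a CM extension $E/F$ of number fields with a place $v_0$ satisfying $(E_{v_0}, F_{v_0}) = (k, k_0)$ and a global character $\Psi : \mathbb{A}_F/F \to \mathbb{C}^\times$ with $\Psi_{v_0} = \psi_0$. Globalize the pair $W \subset V$ to Hermitian $E$-spaces $\mathcal{W} \subset \mathcal{V}$ that localize to $W \subset V$ at $v_0$ and are quasi-split at all other finite places (this may require sacrificing quasi-splitness at an auxiliary archimedean place in order to respect the discriminant product formula). Globalize $\pi_M, \pi_N$ to stable cuspidal automorphic representations $\Pi_M$ of $\U(\mathcal{V})(\mathbb{A}_F)$ and $\Pi_N$ of $\U(\mathcal{W})(\mathbb{A}_F)$ with local components $\pi_M, \pi_N$ at $v_0$, discrete series of specified type at the archimedean places (whose branching was treated in Section~2), and unramified components elsewhere. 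Using the flexibility in this choice, arrange further that $L(1/2, BC(\Pi_M) \boxtimes BC(\Pi_N)) \ne 0$ and that the local Hom space is non-zero at every place $v \ne v_0$.

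Now invoke the global Gross--Prasad period identity for $\U(2) \times \U(3)$, which asserts that the global Bessel period on $\Pi_M \otimes \Pi_N$ is non-vanishing if and only if the central $L$-value above is non-zero and every local Hom space is non-zero. Non-vanishing has been arranged at every $v \ne v_0$, and the hypothesis of the theorem supplies it at $v_0$, so the global period does not vanish; hence the global root number equals $+1$:
\[
\prod_{v} \epsilon(\mathcal{M}_v \otimes \mathcal{N}_v, \Psi_v) = 1.
\]
At each $v \ne v_0$, the local factor is the sign predicted by [GGP, Conjecture~16.3] in terms of the local Hermitian discriminants of $(\mathcal{V}_v, \mathcal{W}_v)$, by the computations of the earlier sections of this paper and the analogous results at the archimedean places. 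Combined with the global reciprocity $\prod_v \omega_{E_v/F_v}(\disc \mathcal{V}_v \cdot \disc \mathcal{W}_v^{-1}) = 1$ for the Hermitian discriminants, the product formula determines $\epsilon(M \otimes N, \psi)$ at $v_0$ to be exactly $+1$ when $\U(V) \times \U(W)$ is quasi-split and $-1$ otherwise.

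The principal obstacle is the globalization step: one must simultaneously globalize two stable supercuspidal parameters with the prescribed local components at $v_0$, controlled quasi-split behavior at every other finite place, and non-vanishing of the global central $L$-value. This parallels step (7) in the proof of Theorem~\ref{T:theta-U2} above, and is typically handled via Arthur's simple trace formula together with an $L$-value non-vanishing statement in the spirit of [Ha]. A secondary subtlety is the invocation of the global Gan--Gross--Prasad conjecture for $\U(2) \times \U(3)$; in this low-rank setting the direction we need is accessible via the Jacquet--Rallis relative trace formula or, alternatively, through an auxiliary theta correspondence argument.
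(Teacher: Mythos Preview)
Your overall architecture---globalize, force the global root number to be $+1$, then read off the local factor at $v_0$ from the product formula---is exactly the paper's. The first assertion is fine (though note that a conjugate-orthogonal character is already \emph{trivial} on $k_0^\times$, not merely quadratic; and for $\det N$ one can simply say that the determinant of an even-dimensional conjugate-symplectic representation is conjugate-orthogonal).

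The main argument, however, has a genuine gap. You write ``arrange further that $L(1/2, BC(\Pi_M)\boxtimes BC(\Pi_N))\neq 0$'' as if this were a routine part of the globalization, but it is not: with both local components fixed at $v_0$ there is no off-the-shelf non-vanishing result for this degree-$6$ Rankin--Selberg $L$-function. Your appeal to the global Gross--Prasad identity is then in the \emph{hard} direction (non-vanishing of $L$-value plus local conditions $\Rightarrow$ non-vanishing of period), and in any case is unnecessary once you have $L(1/2)\neq 0$: the functional equation already gives the root number. Notice too that the local hypothesis $\Hom_{\U(W)}(\pi_M\otimes\pi_N,\CC)\neq 0$ plays no essential role in your argument; that is a warning sign.

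The paper reverses the logic in exactly the way that makes the hypothesis do work. First it globalizes the spaces so that $\U(\VV)$ is \emph{compact} at every archimedean place (this is where the parity-of-real-places count comes from). Then it globalizes $\pi_M$ to an automorphic $\Pi_1$ unramified away from $v_0$, and---this is the key step you are missing---invokes a Poincar\'e-series type lemma (Lemma~1 of [P6]) to globalize $\pi_N$ to a cuspidal $\Pi_0$ for which the \emph{period integral} over $\U(\WW)$ is nonzero. This step uses the local hypothesis at $v_0$ in an essential way: without it, the local linear form vanishes and no such $\Pi_0$ exists. From the non-vanishing period one then cites Ginzburg--Jiang--Rallis [GJR3] (the \emph{easy} direction: period $\Rightarrow$ $L$-value) to get $L(1/2)\neq 0$, hence global root number $+1$. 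At finite $v\neq v_0$ the representations are unramified principal series, so the local root number is $+1$ on the nose; at the archimedean places everything is compact and the sign is computed by the classical $\U(n+1)\downarrow\U(n)$ interlacing (Lemmas~14.6--14.8), giving $(-1)^{n(n+1)/2}=-1$ for $n=2$ at each real place. The parity of the number of real places, controlled by Corollary~14.5, then yields the dichotomy.
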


\noindent{\bf Remark :} The method that we follow to prove this theorem is pretty general,
but it is based on a global theorem of Ginzburg, Jiang, and Rallis [GJR3, theorem 4.6] 
which assumes that automorphic forms on unitary groups $\U(n)$ have base change to 
$\GL(n)$ something which is known at the moment only for generic automorphic representations
on quasi-split unitary groups. However, by Rogawski [Ro], base change is known for any
unitary group in 3 variables, which is why we have restricted ourselves to $\U(3)$
in the above theorem. Nonetheless, we have formulated some of the preliminary
results below in greater generality.

\vskip 5pt

We begin with the following globalization result about local fields, which will 
 be applied to globalize hermitian spaces over local fields so that  there is 
 no ramification outside the place being considered, and the
unitary groups at infinity are either compact, or of rank 1.

\begin{lemma}
Let $k$ be a quadratic extension of a non-archimedean local field $k_0$. Then 
there exists a totally real number field $F$ with $k_0$ as its completion, 
and a quadratic totally imaginary extension $E$ 
of $F$  with corresponding
completion $k$ such that $E$ is unramified over $F$ at all finite 
places different from $k$. 
\end{lemma}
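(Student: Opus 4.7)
The plan is to construct $F$ and $E$ in two stages: first, globalize the base $k_0$ as the completion of a totally real field $F$; second, produce the CM quadratic extension $E/F$ with the prescribed local behavior at $v_0$ and unramified elsewhere.

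For the first stage, let $\alpha \in k_0$ be a primitive element for $k_0/\Q_p$ (where $p$ is the residue characteristic of $k_0$), with minimal polynomial $f(x) \in \Q_p[x]$. Using weak approximation for $\Q$, I would produce a polynomial $f^*(x) \in \Q[x]$ that is simultaneously close to $f$ in the $p$-adic topology on $\Q_p[x]$ and close to some fixed totally real polynomial of the same degree (e.g.\ $\prod_{i=1}^{n}(x-i)$) in the real topology on $\R[x]$. By Krasner's lemma, the $p$-adic approximation ensures that $f^*$ has a root $\alpha^* \in \bar\Q_p$ with $\Q_p(\alpha^*) = k_0$, while the real approximation ensures that all roots of $f^*$ in $\bar\Q$ are real. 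Setting $F = \Q(\alpha^*)$ gives a totally real number field with $F_{v_0} \cong k_0$ for the place $v_0$ determined by $\alpha^*$. This step is essentially routine, and the same flexibility allows us to impose mild auxiliary conditions on $F$ (e.g.\ that $v_0$ is the only place of $F$ above $p$, or conditions on its narrow class group) that will be useful in the second stage.

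For the second stage, I seek $\delta \in F^\times$ such that $E = F(\sqrt{\delta})$ satisfies: (a) $\delta$ is totally negative (so $E$ is CM); (b) in $k_0^\times/(k_0^\times)^2$, $\delta$ lies in the class of a generator of $k/k_0$ (so $E_{w_0} = k$); and (c) $E/F$ is unramified at every finite place $v \neq v_0$. Conditions (a) and (b) are open local conditions at a finite set of places (the real places and $v_0$), while (c) is a global constraint. By class field theory, quadratic extensions of $F$ unramified outside $S = \{v_0\} \cup S_\infty$ correspond to order-$2$ characters of the ray class group $\mathrm{Cl}_F^{\mathfrak m}$ for a suitable conductor $\mathfrak m$ supported in $S$, and the data (a)--(b) determines the desired local components of such a character.

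The main obstacle is to ensure that a global character realizing the prescribed local data exists while remaining unramified outside $S$. To handle this, I would leverage the freedom in stage one: by replacing $F$ with a suitable totally real extension $F'$ (still carrying a place with completion $k_0$), we can arrange that $F$ has odd narrow class number, or more generally that the relevant 2-torsion in the narrow ray class group behaves well, so that the natural local-global map for 2-torsion characters of the ray class group is surjective onto the prescribed local classes at $v_0$ and the real places. With such an $F$ in hand, Grunwald--Wang (applied away from the 2-adic pathology, which is sidestepped by the choice of $F$), or equivalently a direct construction of $\delta$ as $\delta = -u\,\pi_{v_0}^{e}\,\beta^2$ with $u$ a totally positive unit and $\beta \in F^\times$, yields an element $\delta$ meeting all three conditions, hence an extension $E/F$ with the required properties.
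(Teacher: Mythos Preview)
Your two-stage plan is sound, and your first stage (Krasner plus weak approximation) matches the standard argument the paper simply calls ``well-known.'' The second stage, however, diverges from the paper's approach. The paper does not attempt to control the class group of $F$ or invoke Grunwald--Wang; instead, it first constructs \emph{any} CM extension $E_1/F_1$ with the correct completions at $v_0$ and at infinity, allowing arbitrary ramification at other finite places, and then removes that ramification by the ``crossing with a field'' trick: one passes to a suitable totally real extension $F = F_2 \supset F_1$, split at $v_0$, whose local degree at each of the finitely many bad places of $E_1/F_1$ is even (or which simply contains $E_1$ locally there), so that $E = E_1 F_2$ over $F_2$ has all the required properties. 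Your idea of pre-adjusting $F$ so that the desired quadratic character exists directly is workable in principle, but the appeal to Grunwald--Wang is slightly off (the standard statement does not guarantee unramifiedness outside the prescribed set), and arranging odd narrow class number while retaining a place with completion $k_0$ is more delicate than the paper's method. In the end both arguments exploit the same freedom---enlarging the totally real base field---but the paper applies it \emph{after} constructing a preliminary $E_1$, which sidesteps the class-field-theoretic bookkeeping you outline.
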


\noindent{\bf Proof:} Except for the requirement about $E$ being unramified
at any place of $F$ outside the place $k$, this is well-known. Suppose that a quadratic extension $E_1$ 
over $F_1$ with possible ramifications 
is constructed. Then
a well-known technique, {\it crossing with a field}, says that after a suitable
base change, one can get rid of the ramifications; we leave the details to the reader.

\begin{lemma}Let $W$ be a hermitian space
over $k$. Let $F$ be a totally real number field with completion
$k_0$ at a place of $F$, and let $E$ be a quadratic totally imaginary extension 
of $F$  with corresponding
completion $k$. Then there is a hermitian space $\WW$ over $E$ giving rise to
$W$ over $k$ in such a way that the corresponding unitary group
is quasi-split at all finite places of $F$ except the one corresponding
to the completion $k_0$; and at all but 
one infinite place the group is the 
compact group $\U(n)$, and at the remaining infinite place,
 the group is either $\U(n)$, or  $\U(n-1,1)$; if $n$ is odd, we can assume
that the group is compact at all the infinite places. 
\end{lemma}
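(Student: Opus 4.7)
\noindent The plan is to invoke the Hasse principle for hermitian spaces of dimension $n$ over the CM extension $E/F$. Recall that a collection of local hermitian spaces $\{\WW_v\}_v$ descends to a global $\WW$ over $E$ if and only if the local discriminants $d_v \in F_v^\times/\mathbb{N}_{E_w/F_v} E_w^\times$ satisfy the product formula
\[ \prod_v \omega_{E_w/F_v}(d_v) \;=\; 1, \]
which is a consequence of Hasse's norm theorem for the cyclic extension $E/F$. Thus the lemma reduces to choosing local hermitian spaces at each place of $F$ whose unitary groups have the desired local type, and then verifying this single reciprocity relation.

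\noindent First I would prescribe the local data. At the distinguished place $v_0$ with $F_{v_0} = k_0$ and $E_{w_0} = k$, set $\WW_{v_0} := W$. At every finite place $v \neq v_0$, the previous lemma guarantees that $E/F$ is unramified at $v$, so $v$ is either split (in which case the local hermitian space is unique and the unitary group is $\GL_n(F_v)$, which is quasi-split) or inert; at an inert place I take the quasi-split hermitian space. For $n$ even this forces $d_v = +1$, while for $n$ odd the local unitary group is the unique one up to isomorphism (scaling the form by a non-norm changes $d_v$ but not the group), so either choice of $d_v$ yields a quasi-split group. At each archimedean place of $F$, which is real with $E_w = \CC$, I assign a signature $(p,q)$; it contributes $(-1)^q$ to the product formula, and the allowed types from the lemma are $(n,0)$ (compact $\U(n)$) and $(n-1,1)$.

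\noindent It remains to satisfy the product formula by an appropriate choice of the free parameters: archimedean signatures, and when $n$ is odd, the discriminants at inert finite places $\neq v_0$. For $n$ even, only $v_0$ and the archimedean places contribute non-trivially, so I take every archimedean signature to be $(n,0)$ when $d_{v_0} = +1$, or exactly one signature $(n-1,1)$ with the rest $(n,0)$ when $d_{v_0} = -1$. For $n$ odd, I take every archimedean signature to be $(n,0)$ and, when $d_{v_0} = -1$, flip the local discriminant at some auxiliary inert finite place $v_1 \neq v_0$; such a $v_1$ exists in abundance by Chebotarev density applied to $E/F$, and the flip does not affect the local unitary group. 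The Hasse principle then produces the desired global $\WW$, and the main (modest) obstacle is precisely this parity bookkeeping in the product formula.
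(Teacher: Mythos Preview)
Your proposal is correct and follows essentially the same approach as the paper: both arguments reduce to the local--global principle for hermitian spaces (the paper phrases it via the classification by normalized discriminant plus signatures, together with the class-field-theoretic exact sequence for $F^\times/\N E^\times$, while you phrase it directly as the Hasse product formula for local discriminants), and both then do the parity bookkeeping to satisfy the single reciprocity constraint.

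The only cosmetic difference is in the $n$ odd case. The paper absorbs the sign at an \emph{archimedean} place, using that the definite form and its scaling by $-1$ have opposite normalized discriminant but define the same compact group $\U(n)$; you instead absorb it at a \emph{finite inert} place $v_1 \neq v_0$, using that scaling by a non-norm there flips the discriminant without changing the (unique, quasi-split) local unitary group. Both maneuvers exploit the same underlying fact that for $n$ odd the unitary group is insensitive to scaling the form, so this is a matter of taste rather than a different idea.
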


\noindent{\bf Proof :} The proof of the lemma will depend on the well-known
classification of a hermitian form over a number field, according to which a 
hermitian form over a number field is determined by 
\begin{enumerate}
\item the normalized discriminant, and
\item the signatures at the infinite places.
\end{enumerate}
Moreover, given any normalized discriminant, and signatures at infinite places (except for obvious 
compatibility between normalized discriminant and signatures), there is a  hermitian form. 

We also note the following exact sequence from classfield theory,
$$0 \rightarrow F^\times / \N E^\times \rightarrow \A_F^\times/\N \A^\times_E 
\rightarrow {\rm Gal}(E/F) \rightarrow 0,$$
from which it follows that one can construct an element in $F^\times$ which is trivial
in $F^\times_v/\N E^\times_v$ at all the finite places except $k_0$, and which at the infinite
places has the desired signs, except that the product of the signs is 1 or -1, depending
on whether the element in $k_0^\times/\N k^\times$ is trivial or nontrivial.

The proof of the lemma is now completed by observing that a hermitian form 
of normalized discriminant 1 over a 
non-archimedean local field 
defines a quasi-split group, and that the 
normalized 
discriminants of the
hermitian spaces $Z_1\bar{Z}_1 + Z_2 \bar{Z}_2 + \cdots + Z_n\bar{Z}_n$, and
$Z_1\bar{Z}_1 + Z_2 \bar{Z}_2 +\cdots + Z_{n-1}\bar{Z}_{n-1} -Z_n\bar{Z}_n$ over $\CC$ 
are negative  of each other, and if $n$ is odd, the normalized 
discriminants of the
hermitian spaces $Z_1\bar{Z}_1 + Z_2 \bar{Z}_2+  \cdots + Z_n\bar{Z}_n$, and
$-(Z_1\bar{Z}_1 + Z_2 \bar{Z}_2+ \cdots  +Z_n\bar{Z}_n)$  
are negative  of each other. 

\begin{cor} Let $W$  be a  hermitian 
subspace of codimension 1 of a hermitian space $V$ 
over $k$. Let $F$ be a totally real number field with completion
$k_0$ at a place of $F$, and let $E$ be a quadratic 
totally imaginary extension 
of $F$  with corresponding
completion $k$. Then there is a hermitian subspace $\WW$ of codimension 1 
of a hermitian space $\VV$  over $E$ giving rise to
$W$ and $V$ over $k$ in such a way that the corresponding unitary groups
are quasi-split at all the finite places of $F$ except the one corresponding
to the completion $k_0$;  assuming $F \not = \Q$, 
the group $\U(\VV)$ is the 
compact group $\U(n+1)$ at all but two infinite places, and at the remaining 
infinite places,
 the group is either $\U(n+1)$, or  $\U(n,1)$; the subgroup $\U(\WW)$ is compact
at all but possibly one infinite place.

\end{cor}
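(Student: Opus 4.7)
The strategy is to globalize the orthogonal decomposition $V = W \oplus W^{\perp}$ by first globalizing the smaller space $\mathcal{W}$ via the preceding lemma, and then constructing a one-dimensional hermitian space $\mathcal{L}$ over $E$ whose orthogonal sum with $\mathcal{W}$ has the required global properties. Concretely, I first apply the previous lemma to $W$ to obtain a hermitian space $\mathcal{W}$ over $E$ with $\mathcal{W}_{k_0} \cong W$, with $\U(\mathcal{W}_v)$ quasi-split at every finite place $v \ne k_0$, and with $\U(\mathcal{W})$ compact at all infinite places except one distinguished place $v_1$, where it is $\U(n)$ or $\U(n-1,1)$.

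Next, I construct $\mathcal{L}$ by prescribing its normalized discriminant locally, viewed as an element of $F_v^{\times}/\N E_v^{\times}$ at each place $v$: at $k_0$ the local discriminant is forced to equal that of the complementary line $V/W$; at each finite place $v\ne k_0$ I choose the local discriminant so that $\mathcal{W}_v \oplus \mathcal{L}_v$ lies in the quasi-split class (this is a unique choice when $\dim V$ is even, and arbitrary when $\dim V$ is odd, since in the odd-rank case both hermitian classes over a local unramified quadratic extension give quasi-split unitary groups); at infinite places I take $\mathcal{L}$ positive definite except possibly at a single place. Existence of a global hermitian line realizing these local data follows from the classfield theory exact sequence
\[
0 \to F^{\times}/\N E^{\times} \to \A_F^{\times}/\N \A_E^{\times} \to \Gal(E/F) \to 0,
\]
whose only obstruction is the global parity condition $\prod_v \omega_{E_v/F_v}(\alpha_v) = 1$. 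Since $F \ne \Q$ has at least two real places, I have enough freedom in the archimedean signs to satisfy this constraint using at most one negative-definite archimedean component.

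Setting $\mathcal{V} := \mathcal{W} \oplus \mathcal{L}$, one now verifies the claimed properties: the identification $\mathcal{V}_{k_0} \cong V$ is automatic from the local choices; at each finite place $v \ne k_0$, the normalized discriminant of $\mathcal{V}_v$ was engineered to lie in the quasi-split class; at each infinite place $v$ where both $\mathcal{W}_v$ and $\mathcal{L}_v$ are positive definite, $\U(\mathcal{V}_v)$ is the compact group $\U(n+1)$. The only potentially non-compact places of $\mathcal{V}$ are $v_1$ and, if forced by the parity condition, one extra archimedean place where $\mathcal{L}$ was taken negative definite, producing a $(n,1)$ signature in each case.

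The main technical point requiring care is to avoid producing a $\U(n-1,2)$ signature at $v_1$: if $\mathcal{W}_{v_1}$ has signature $(n-1,1)$, I must insist that $\mathcal{L}_{v_1}$ be positive definite, and instead absorb any required sign flip at a different infinite place $v_2 \ne v_1$ (where $\mathcal{W}$ is compact $\U(n)$, so that $\mathcal{V}_{v_2}$ becomes $\U(n,1)$). Because $F \ne \Q$ provides at least two real places, this reallocation is always possible, and the resulting $\mathcal{V}$ has $\U(\mathcal{V})$ compact at all but at most two infinite places, each of the allowed form $\U(n,1)$, while $\mathcal{W}$ retains the compactness at all but $v_1$ inherited from the preceding lemma.
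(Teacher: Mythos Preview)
Your proposal is correct and follows essentially the same approach as the paper: write $V = W \oplus L_c$, globalize $W$ via the preceding lemma, then globalize the complementary line by choosing a global $c$ with prescribed local norm classes, invoking the classfield-theory exact sequence to handle the single parity obstruction. The paper's proof is terser and does not spell out the $\U(n-1,2)$ pitfall at $v_1$ that you carefully circumvent by reallocating the possible negative sign to a second archimedean place, but this is implicit in its remark that the archimedean signs are free subject only to a product constraint.
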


\noindent{\bf Proof :} Let $V = W \oplus L_c$ where $L_c$ is $k$ with the
hermitian structure $cZ\bar{Z}$, $c \in k_0^\times$. Globalize $W$ by the 
previous lemma, and globalize $c$ so that the normalized discriminant of
$V$ is 1 at all the finite places of $F$ other than $k_0$  
(so that $\U(\VV)$ is quasi-split at
all the finite places of $F$ other than $k_0$), and so that $c$ has arbitrary signs 
at infinity, with only the product of the signs pre-determined which allows
for the desired conclusion.

\vskip 5pt

We omit a proof of the following corollary of the lemma which follows exactly as in the
previous corollary. 

\begin{cor} If $n \equiv 1, 2 \mod 4$, then hermitian spaces 
$W \subset V$ of dimensions $n,n+1$ 
can be globalized keeping them positive definite at infinity, and maximally split
at all finite places other than $k$. 
For $n \equiv 2 \mod 4$, if $W$ has an isotropic subspace of dimension $n/2$, 
and for $n \equiv 1 \mod 4$, if $V$ has an isotropic subspace of dimension $(n+1)/2$, 
then there are an even number of real places in $F$, else an odd number of real
places. 
\end{cor}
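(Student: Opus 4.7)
The plan is to adapt the strategy of the previous corollary: apply the first lemma of this section to obtain a totally real number field $F$ and a totally imaginary quadratic extension $E/F$ with $F_{v_0}=k_0$, $E_w=k$, and $E/F$ unramified at every finite place of $F$ distinct from $v_0$. The construction then rests on Landherr's classification of global hermitian forms (a form is determined up to isomorphism by its dimension, its signatures at the real places, and its normalized discriminant in $F^\times/\N E^\times$) combined with the exact sequence from class field theory
\[
0 \longrightarrow F^\times/\N E^\times \longrightarrow \bigoplus_v F_v^\times/\N E_v^\times \longrightarrow \Z/2 \longrightarrow 0.
\]

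The dichotomy to exploit is that at an unramified finite place $v \ne v_0$, a hermitian form of odd dimension always defines a quasi-split unitary group, whereas a form of even dimension does so iff its normalized discriminant is a local norm. Hence only the even-dimensional member of the pair $\{W,V\}$---namely $W$ when $n \equiv 2 \pmod 4$ and $V$ when $n \equiv 1 \pmod 4$---imposes a rigid local constraint at each finite place outside $v_0$; the odd-dimensional form is flexible there. A direct computation shows that the normalized discriminant of the positive definite form of dimension $m$ at a real place equals $(-1)^{m(m-1)/2}$, which equals $-1$ precisely when $m \equiv 2, 3 \pmod 4$.

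In both cases of the corollary the even-dimensional form has dimension $\equiv 2 \pmod 4$, so the global product compatibility forces
\[
\epsilon \cdot (-1)^r = +1,
\]
where $\epsilon \in F_{v_0}^\times/\N E_w^\times$ is the normalized discriminant at $v_0$ of the even-dimensional form and $r$ is the number of real places of $F$. This yields exactly the claimed parity condition: $r$ is even iff $\epsilon = +1$, iff the even-dimensional form is quasi-split at $v_0$, iff it contains an isotropic subspace of half its rank. The cases $n \equiv 0, 3 \pmod 4$ are excluded from the statement precisely because there the analogous condition would force $\epsilon = +1$ unconditionally, so that globalization fails whenever the even-dimensional form is not already quasi-split at $v_0$.

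Having thereby fixed the parity of $r$ and constructed the even-dimensional form globally, I would produce the odd-dimensional form together with the required nesting $\WW \subset \VV$ as follows: if $n \equiv 2 \pmod 4$ one sets $\VV = \WW \oplus L_{\tilde c}$ for a scalar $\tilde c \in F^\times$, while if $n \equiv 1 \pmod 4$ one locates a rank-one non-degenerate subspace $L_{\tilde c} \subset \VV$ and defines $\WW = L_{\tilde c}^\perp$. In either case the scalar $\tilde c$ is supplied by the exact sequence above with prescribed local classes: positive at every real place, matching the local scalar defining $V/W$ at $v_0$, and consistent with the required invariants of the two forms at the remaining finite places (a single auxiliary finite place may be used to absorb any discrepancy). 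The product-condition for $\tilde c$ is then satisfied thanks to the parity identity derived above. The principal point of care is the bookkeeping of normalized-discriminant conventions; the remaining verifications are routine.
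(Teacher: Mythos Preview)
Your argument is correct and follows the same strategy the paper intends: since the paper simply writes ``We omit a proof of the following corollary of the lemma which follows exactly as in the previous corollary,'' your use of Landherr's classification together with the class-field-theory exact sequence, focusing constraints on the even-dimensional member of the pair and then realizing the odd-dimensional one via a line, is precisely in the spirit of the previous corollary's proof.

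One small organizational difference is worth noting. The previous corollary globalizes $W$ first and then adjoins a line $L_{\tilde c}$ to obtain $V$. You instead globalize whichever of $W,V$ is even-dimensional (so $W$ when $n\equiv 2$, but $V$ when $n\equiv 1$) and then add or remove a line. This is a genuine simplification in the case $n\equiv 1\pmod 4$: had you globalized the odd-dimensional $W$ first, the quasi-splitness of the even-dimensional $\VV=\WW\oplus L_{\tilde c}$ at every finite $v\ne v_0$ would impose a congruence on $\tilde c_v$ at each such place, and one would then have to check that these are compatible with the product formula. By handling the even-dimensional form first you concentrate all of the rigidity into the parity computation $\epsilon\cdot(-1)^r=+1$, after which the line carries constraints only at $v_0$ and at infinity, so the auxiliary-place adjustment is manifestly harmless (the odd-dimensional orthogonal complement is automatically quasi-split there). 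Your side remark explaining why $n\equiv 0,3\pmod 4$ are excluded is also correct: in those cases the even-dimensional form has dimension $\equiv 0\pmod 4$, its positive-definite normalized discriminant at each real place is $+1$, and the product formula forces $\epsilon=+1$, obstructing globalization whenever the local even-dimensional form is non-split.
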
   

\vskip 5pt 

\noindent{\bf Proof of Theorem 14.2:} By the corollary above, we can assume
that $\U(\VV)$ is compact at infinity.
It is then easy to
see that we can  globalize the representation
$\pi_M$ of $\U(V)$ to an automorphic representation $\Pi_1$ of $\U(\VV)(\A)$
in such a way that it is unramified at all the finite places of $F$
except $k_0$.

By Lemma 1 of [P6], we can globalize $\pi_N$ to an automorphic representation
$\Pi_0$ such that the period integral 

$$\int_{\U(\WW)\backslash \U(\WW)(\A)} f_0 f_1 \not = 0,$$
for some $f_0$ in $\Pi_0$,  and $f_1$ in $\Pi_1$.

By the theorems due to Ginzburg, Jiang, and Rallis, cf. [GJR3, theorem 4.6], since the 
period integral is nonzero, the central critical $L$-value,

$$L(\frac{1}{2}, \Pi^E_0 \otimes \Pi^E_1) \not = 0,$$
where $\Pi^E_0$ and $\Pi^E_1$ denote base change of $\Pi_0$ and $\Pi_1$
to $E$.

This implies that the global root number,
$$\epsilon(\frac{1}{2}, \Pi^E_0 \otimes \Pi^E_1) = 1.$$

Let $$\Pi_0 = \otimes_w \Pi_{0,w}, {\rm~~~~~~~~~and~~~~~~~~}\Pi_1 = \otimes_w \Pi_{1,w},$$
with $\Pi_{0,v}= \pi_N,$ and $\Pi_{1,v}= \pi_M$. From the nonvanishing of the period
integral, it follows that 
$${\rm Hom}_{\U(\WW_w)}(\Pi_{0,w} \otimes \Pi_{1,w}, \CC) \not = 0$$
for all places $w$ of $F$.

Since the representations $\Pi_{1,w}$ for $w$, a finite place of $F$, not $v$, are  unramified 
by construction, they are in particular quotients of principal
series representations, and hence we know the validity of theorem 14.2 
for such representations:

$$\epsilon_w(\frac{1}{2}, \Pi^E_{0,w} \otimes \Pi^E_{1,w}) = 1$$
for all finite places $w$ of $F$ except $v$.

Since the global epsilon factor is a product of local epsilon factors,
we have
$$ \epsilon(\frac{1}{2}, M \otimes N) 
\epsilon_\infty(\frac{1}{2}, \Pi^E_{0,\infty} \otimes \Pi^E_{1,\infty}) = 1.$$

The following lemmas then complete the proof of the theorem on noting that there
are an even number of places at infinity if $\U(\WW)$ is quasi-split, and odd number
of places at infinity when $\U(\WW)$ is not quasi-split.

\begin{lemma} Let $W$ be a codimension 1 hermitian subspace
of a positive definite hermitian space $V$ of dimension $n+1$ over $\CC$. 
Suppose that $\pi_0$ (resp. $\pi_1$) is a finite dimensional irreducible
representation of $\U(V)$ (resp. $\U(W)$). Let the Langlands 
parameter of $\pi_0$ (resp. $\pi_1$)  be $\sigma_0$ (resp.
$\sigma_1$).
Suppose that ${\rm Hom}_{\U(W)}
(\pi_0 \otimes \pi_1, \CC) \not = 0.$ Then 
\begin{eqnarray*} 
\epsilon (\sigma_0 \otimes \sigma_1) & = & \left \{ \begin{matrix} 1 
& {\sf ~if~~} n \equiv 0,3 \mod 4 \\ 
-1 & 
{\sf ~if ~~} n \equiv 1,2 \mod 4.   \end{matrix}\right.
\end{eqnarray*}
\end{lemma}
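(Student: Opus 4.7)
The plan is to combine the classical branching law for compact unitary groups with the archimedean $\epsilon$-factor formula of Proposition \ref{P:11.1}. Since $V$ is positive definite of dimension $n+1$, both $\U(V) = \U(n+1)$ and $\U(W) = \U(n)$ are compact, and the hypothesis $\Hom_{\U(W)}(\pi_0 \otimes \pi_1, \CC) \neq 0$ is exactly the statement that $\pi_1^{\vee}$ occurs in $\pi_0|_{\U(W)}$. By the classical multiplicity-one branching rule for $\U(n+1) \downarrow \U(n)$, this is equivalent to the interlacing of the highest weights of $\pi_0$ and $\pi_1^{\vee}$.

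Passing to Harish-Chandra parameters via the $\rho$-shift, I would write $\sigma_0 = \bigoplus_{i=1}^{n+1}(\bar z/z)^{a_i}$ and $\sigma_1^{\vee} = \bigoplus_{j=1}^{n}(\bar z/z)^{c_j}$, so that $\sigma_1 = \bigoplus_{j}(\bar z/z)^{b_j}$ with $b_j = -c_{n+1-j}$. The branching condition then becomes the strict interlacing
\[ a_1 > c_1 > a_2 > c_2 > \cdots > c_n > a_{n+1}, \]
with consecutive gaps at least $\tfrac{1}{2}$. Because $\dim V$ and $\dim W$ have opposite parities, the $a_i$ and $c_k$ lie in opposite cosets of $\Z$ in $\tfrac{1}{2}\Z$, so every $a_i + b_j = a_i - c_{n+1-j}$ is a nonzero half-integer; each summand $(\bar z/z)^{a_i + b_j}$ of $\sigma_0 \otimes \sigma_1$ is therefore a nontrivial conjugate-symplectic character, and its $\epsilon$-factor is computed by Proposition \ref{P:11.1}.

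Invoking that proposition together with the additivity of local $\epsilon$-factors, I obtain
\[ \epsilon(\sigma_0 \otimes \sigma_1, \psi) \,=\, \prod_{i,j} \mathrm{sign}(a_i + b_j) \,=\, (-1)^{N}, \qquad N = \#\{(i,k) : a_i < c_k\}, \]
and the strict interlacing reduces the count to $N = \sum_{i=1}^{n+1}(i-1) = n(n+1)/2$. This quantity is even precisely when $n \equiv 0, 3 \pmod 4$, yielding the sign claimed in the lemma. The independence of $\epsilon(\sigma_0 \otimes \sigma_1)$ from $\psi$ also drops out, since any two additive characters of $\CC/\R$ differ by scaling by some $t \in \R^{\times}$ and the character $(\bar z/z)^a$ is trivial on $\R^{\times}$.

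The only delicate point is the sign bookkeeping in dualization and in the $\rho$-shifts: one must verify that the Langlands parameter of $\pi_1^{\vee}$ is the negation-reversal of that of $\pi_1$, so that $c_j = -b_{n+1-j}$, and that the shifts for $\U(n+1)$ and $\U(n)$ force the $a_i$ and $b_j$ into opposite cosets of $\Z$ in $\tfrac{1}{2}\Z$. Both verifications are routine, but this is where sign errors would most easily creep in.
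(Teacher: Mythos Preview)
Your proof is correct and follows essentially the same route as the paper: the classical interlacing branching law for $\U(n+1)\downarrow\U(n)$ combined with the archimedean root-number formula of Proposition~\ref{P:11.1}, leading to $\epsilon(\sigma_0\otimes\sigma_1)=(-1)^{n(n+1)/2}$. The paper organizes the count by fixing a character of the $(n+1)$-dimensional parameter and reading off $\epsilon(\chi_{\mu_k}\otimes\sigma_0)=(-1)^{n-k+1}$ from the interlacing, then multiplying over $k$; you instead count the pairs $(i,k)$ with $a_i<c_k$ directly and sum $\sum_{i}(i-1)$, which is the same arithmetic.
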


\noindent{\bf Proof:} The proof of this lemma is a simple consequence of the
well-known branching law from the compact group $\U(n+1)$ to $\U(n)$, combined
with the value of the epsilon factor given by the following lemma, which has been demonstrated in Proposition \ref{P:11.1}.
\vskip 5pt

\begin{lemma}
Let $\psi$ be the additive character on $\CC$ given by
$\psi(z) =  e^{ -2\pi i y}$ where $z = x+ iy$. For $n$ an integer, 
let $\chi_n$ denote the character
 $\chi_n(z) = (\bar{z}/z)^{n/2} = e^{-ni \theta}$ for $z = re^{i\theta} \in \CC^\times$. Then for $n$ odd,

\begin{eqnarray*} 
\epsilon (\chi_n, \psi) & = & \left \{ \begin{matrix} 1 
& {\sf ~if~~} n > 0 \\ 
-1 & 
{\sf ~if~~}  n < 0. \end{matrix}\right.
\end{eqnarray*}
\end{lemma}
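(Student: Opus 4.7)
The plan is to reduce this lemma directly to Proposition \ref{P:11.1}, which already computes the $\epsilon$-factor of exactly the character $(\bar z/z)^a$ for half-integral $a$, but with respect to the additive character $\psi_0(z) = e^{2\pi(\bar z - z)}$ rather than the $\psi$ of the lemma. So the entire content of the proof will be a comparison of additive characters.

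First I would write out both additive characters explicitly in terms of $z = x + iy$. The character of Proposition \ref{P:11.1} is $\psi_0(z) = e^{2\pi i {\rm Tr}(iz)} = e^{2\pi(\bar z - z)} = e^{-4\pi i y}$, while the character in the lemma is $\psi(z) = e^{-2\pi i y}$. Hence $\psi_0(z) = \psi(2z)$, i.e.\ $\psi_0 = \psi_{[2]}$ in the standard notation $\psi_c(z) = \psi(cz)$.

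Next I would invoke the elementary transformation rule for Tate's local $\epsilon$-factor under rescaling of the additive character at the central point:
\[ \epsilon(\chi, \psi_c) = \chi(c) \cdot \epsilon(\chi, \psi). \]
Applying this with $c = 2 \in \R^\times \subset \CC^\times$ and $\chi = \chi_n$, we get $\epsilon(\chi_n, \psi_0) = \chi_n(2) \cdot \epsilon(\chi_n, \psi)$. Since $\chi_n(z) = (\bar z/z)^{n/2}$ and $\bar 2/2 = 1$, the twist $\chi_n(2) = 1$, so in fact
\[ \epsilon(\chi_n, \psi) = \epsilon(\chi_n, \psi_0). \]

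Finally, with $n$ odd, the exponent $a = n/2$ is a half integer (not an integer), so $\chi_n = (\bar z/z)^a$ is conjugate-symplectic and Proposition \ref{P:11.1} applies verbatim: the right-hand side equals $+1$ when $a = n/2 > 0$ and $-1$ when $a = n/2 < 0$, which is exactly the stated dichotomy. I do not anticipate any genuine obstacle here; the only thing to be careful about is the factor of $2$ between the two additive characters and the check that it does not produce an extra sign, which it does not because $2 \in \R^{\times}$ is a norm from $\CC^\times$ and $\chi_n$ is trivial on positive reals.
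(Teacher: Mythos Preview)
Your proposal is correct and is essentially the paper's own argument: the paper simply says that this lemma ``has been demonstrated in Proposition~\ref{P:11.1}.'' You have supplied the one extra detail the paper leaves implicit, namely the comparison of the additive character $\psi(z)=e^{-2\pi i y}$ with the $\psi_0(z)=e^{-4\pi i y}$ of Proposition~\ref{P:11.1} via the scaling rule $\epsilon(\chi,\psi_c)=\chi(c)\,\epsilon(\chi,\psi)$ (the same rule used in the proof of Proposition~\ref{P:11.1} itself with $c=i$), and correctly observed that $\chi_n(2)=1$ so no sign is introduced.
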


\begin{lemma} Let $\pi_0$ (resp. $\pi_1$) be a finite dimensional
irreducible representation of the compact group $\U(n)$ (resp. $\U(n+1)$)
with $L$-parameter restricted to $\CC^\times$ given by an $n$-tuple of half-integers
$\sigma_0 = \{- \lambda_n < -\lambda_{n-1} < \cdots < -\lambda_1 \}$  
(resp. $\sigma_1 = \{\mu_1 < \mu_2 < \cdots < \mu_{n+1}\}$ an $(n+1)$-tuple of half-integers), 
where all the $\lambda_i's$ are
half-integers but not integers if $n$ is even, and are integers if $n$ is odd, and 
$\mu_i's$ are all integers if $n$ is even, and half-integers but not integers
if $n$ is odd. Then $\Hom_{\U(n)}(\pi_1 \otimes \pi_0,\CC) \ne 0$ if and
only if $$\mu_1 < \lambda_1 < \mu_2 < \lambda_2 < \cdots < \lambda_n < \mu_{n+1}.$$  
\end{lemma}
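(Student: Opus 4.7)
The strategy is to reduce the statement to the classical Weyl branching rule for $\U(n+1) \supset \U(n)$ and to translate the resulting interlacing condition on highest weights into the strict interlacing condition on Langlands parameters via the $\rho$-shift.

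First, by the standard identification of invariant pairings,
\[ \Hom_{\U(n)}(\pi_1 \otimes \pi_0, \CC) \;\cong\; \Hom_{\U(n)}(\pi_1|_{\U(n)}, \pi_0^{\vee}), \]
so the Hom space is nonzero if and only if $\pi_0^{\vee}$ occurs in the restriction of $\pi_1$ to $\U(n)$. If $\pi_1$ has highest weight $(a_1 \geq \cdots \geq a_{n+1})$ and $\pi_0^\vee$ has highest weight $(b_1 \geq \cdots \geq b_n)$, the classical Weyl branching rule states that this happens, with multiplicity one, precisely when
\[ a_1 \geq b_1 \geq a_2 \geq b_2 \geq \cdots \geq a_n \geq b_n \geq a_{n+1}. \]
The multiplicity-one assertion has in any case already been established earlier in the paper via the Harish-Chandra/Kostant argument.

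Next, I translate from highest weights to Harish-Chandra parameters. For $\U(m)$, the restriction to $\CC^\times$ of the Langlands parameter of the irreducible of highest weight $(c_1 \geq \cdots \geq c_m)$ consists of the numbers $c_i + (m+1)/2 - i$. Writing these in increasing order, and taking account of the convention in the statement that the entries of the parameter of $\pi_0$ are listed as $-\lambda_i$ (so the $\lambda_i$ are, up to reordering, the parameters of $\pi_0^{\vee}$), this gives the explicit formulas
\[ \mu_j = a_{n+2-j} + j - \tfrac{n+2}{2}, \qquad \lambda_j = b_{n+1-j} + j - \tfrac{n+1}{2}. \]

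Finally, substituting these into the Weyl interlacing, the inequality $a_i \geq b_i$ becomes $\mu_{n+2-i} - \lambda_{n+1-i} \geq \tfrac{1}{2}$, and $b_i \geq a_{i+1}$ becomes $\lambda_{n+1-i} - \mu_{n+1-i} \geq \tfrac{1}{2}$. Since the $\mu_j$ and the $\lambda_j$ lie in cosets of $\Z$ of opposite parities, each such half-integer lower bound is equivalent to the corresponding strict inequality. After reindexing, the full collection of these inequalities is exactly $\mu_1 < \lambda_1 < \mu_2 < \lambda_2 < \cdots < \lambda_n < \mu_{n+1}$, which is the claim. The only nontrivial input is Weyl's branching rule; everything else is bookkeeping, the key numerical observation being that the $\rho$-shift for $\U(n+1)$ exceeds that for $\U(n)$ by exactly $\tfrac{1}{2}$ in each coordinate, which converts Weyl's weak interlacing on integer highest weights into the strict interlacing on half-integer parameters.
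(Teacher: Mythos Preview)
Your proof is correct. The paper itself offers no proof of this lemma: it is stated as a reformulation of the classical Weyl branching rule for $\U(n+1)\supset\U(n)$ and is used as an input to the surrounding argument. Your derivation---reducing to Weyl's interlacing on highest weights and then translating to Harish-Chandra parameters via the $\rho$-shift, noting that the half-integer offset between the $\rho$'s for $\U(n+1)$ and $\U(n)$ converts the weak inequalities to strict ones---is exactly the standard way to obtain this statement, and your bookkeeping checks out.
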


\begin{cor} With notation as in the lemma, and assuming that $\pi_0^{\vee}$ appears in $\pi_1$  
$$\epsilon(\chi_{\mu_k} \otimes \sigma_0) = (-1)^{n-k+1}, {\sf ~~for~~all~~} k,$$
and therefore,

\begin{eqnarray*}
\epsilon(\sigma_1 \otimes \sigma_0) & = & \prod_{k=1}^{n+1}(-1)^{n-k+1} \\
                                    & = & (-1)^{\frac{n(n+1)}{2}}.
\end{eqnarray*}

\end{cor}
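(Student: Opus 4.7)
The plan is to reduce the computation of $\epsilon(\chi_{\mu_k}\otimes\sigma_0)$ to an application of the preceding lemma on epsilon factors of the characters $\chi_n$ of $\CC^\times$, combined with the interlacing inequality
\[ \mu_1 < \lambda_1 < \mu_2 < \lambda_2 < \cdots < \lambda_n < \mu_{n+1} \]
supplied by the branching lemma.  First, since $\sigma_0$ restricted to $\CC^\times$ is the direct sum of the one-dimensional characters $\chi_{-\lambda_i}$ for $i = 1,\dots,n$, additivity of local epsilon factors gives
\[ \epsilon(\chi_{\mu_k}\otimes\sigma_0,\psi) \;=\; \prod_{i=1}^{n}\epsilon(\chi_{\mu_k-\lambda_i},\psi). \]
Because the parities of the $\lambda_i$ and $\mu_j$ are opposite (one family consists of integers and the other of half-integers), each $\mu_k-\lambda_i$ is a nonzero half-integer, and in particular every exponent appearing above is ``odd'' in the sense of the preceding lemma.

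Next, I would plug in the explicit value of $\epsilon(\chi_m,\psi)$ from that lemma: it equals $+1$ when $m>0$ and $-1$ when $m<0$.  The interlacing inequality now determines the sign of $\mu_k-\lambda_i$ for every pair: for $i\le k-1$ one has $\lambda_i \le \lambda_{k-1} < \mu_k$, while for $i\ge k$ one has $\lambda_i \ge \lambda_k > \mu_k$.  Hence the product splits as
\[ \epsilon(\chi_{\mu_k}\otimes\sigma_0,\psi) \;=\; (+1)^{k-1}\cdot(-1)^{n-k+1} \;=\; (-1)^{n-k+1}, \]
which is the first assertion of the corollary.

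Finally, since $\sigma_1|_{\CC^\times} = \bigoplus_{k=1}^{n+1}\chi_{\mu_k}$, additivity once more yields
\[ \epsilon(\sigma_1\otimes\sigma_0,\psi) \;=\; \prod_{k=1}^{n+1}\epsilon(\chi_{\mu_k}\otimes\sigma_0,\psi) \;=\; \prod_{k=1}^{n+1}(-1)^{n-k+1}. \]
The exponent is $\sum_{k=1}^{n+1}(n-k+1) = \sum_{j=0}^{n} j = n(n+1)/2$, giving $(-1)^{n(n+1)/2}$ as claimed.  There is no serious obstacle here; the only thing to be careful about is ensuring that each $\mu_k-\lambda_i$ really is a nonzero half-integer (so that the preceding lemma applies verbatim), which is guaranteed by the opposite-parity condition on the $\lambda_i$ and $\mu_j$ stated in the branching lemma.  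Note that the independence of $\epsilon$ from the choice of $\psi$ follows, as in Proposition~\ref{P:11.1}, from the fact that all the characters $\chi_{\mu_k-\lambda_i}$ are conjugate-symplectic.
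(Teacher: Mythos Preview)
Your proof is correct and follows exactly the approach the paper intends: decompose $\sigma_0$ and $\sigma_1$ into the one-dimensional characters, use additivity of epsilon factors, invoke the interlacing inequality to count how many differences $\mu_k-\lambda_i$ are negative, and apply the sign formula for $\epsilon(\chi_m,\psi)$. The paper does not spell out a proof of this corollary, but your argument is precisely the computation that the surrounding lemmas are set up to enable.

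One small notational remark: in the paper's convention $\chi_n(z)=(\bar z/z)^{n/2}$ with $n$ an integer, so the character with exponent $\mu_k-\lambda_i$ is literally $\chi_{2(\mu_k-\lambda_i)}$, and it is $2(\mu_k-\lambda_i)$ that is an odd integer. Your phrase ``odd in the sense of the preceding lemma'' is therefore slightly loose, but since you also invoke Proposition~\ref{P:11.1} (where the exponent $a=\mu_k-\lambda_i$ is the half-integer directly), the argument is sound either way.
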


\vskip 20pt
\noindent{\bf Remark :} It should be mentioned that the global method 
followed in the proof of theorem 14.2 proves that if there is an invariant linear form, then the epsilon factor has some fixed value. The natural
variant of the theorem of Waldspurger in [Wa4] to unitary groups 
proves that such an 
invariant form  exists on a relevant pair of unitary groups, which will
then strenghthen theorem 14.2 to an  
if and only if statement.


\begin{thebibliography}{MVW} 






\bibitem[AGRS]{}A. Aizenbud, D. Gourevitch, 
S. Rallis and G. Schiffmann, {\em Multiplicity 
One Theorems}, Arxive: 0709.4215; to appear in the \emph{Annals of Mathematics}.


\bibitem[BZ]{}J. Bernstein and A. Zelevinsky, \emph{Induced
Representations of Reductive $p$-adic groups}, Ann. Scient. Ecole Norm. Sup.
10 (1977) 441-472.







\bibitem[DL]{DL} P. Deligne and G. Lusztig, {\em  Representations of reductive groups over finite fields}, Ann. of Math. (2) 103 (1976), no. 1, 103--161.

\bibitem[DR]{DR} S. Debacker and M. Reeder, {\em  Depth-zero supercuspidal L-packets and their stability}, Annals of Math., 169, No.3, (2009),  pages 795--901.

\bibitem[DR2]{DR2} S. Debacker and M. Reeder, {\em Some very cuspidal generic representations}, preprint, available at http://www2.bc.edu/~reederma/vcgeneric.pdf.

\bibitem[FQ]{} A. Fr\"ohlich and J. Queyrut, {\em On the functional
equation of an Artin $L$-function for characters of real representations},
Inv. Math. 20 (1973) 125-138.

\bibitem[GGP]{GGP} W. T. Gan, Benedict H. Gross and D. Prasad, {\em Symplectic local root numbers, central critical L-values and restriction problems in the representation theory of classical groups}, preprint.




\bibitem[GJR1]{GJR1} D. Ginzburg, D. Jiang, and S. Rallis, {\em  On the nonvanishing of the central value of the Rankin-Selberg $L$-functions},  J. Amer. Math. Soc. 17 (2004), no. 3, 679--722.

\bibitem[GJR2]{}D. Ginzburg, D. Jiang, and S. Rallis, {\em On the nonvanishing
of the central value of the Rankin-Selberg $L$-functions, II}, Automorphic 
Representations, $L$-functions and Applications: Progress and Prospects, 157-191, Ohio State Univ. Math. Res. Inst. Publ. 11, de Guyter, Berlin 2005. 

\bibitem[GJR3]{}D. Ginzburg, D. Jiang, and S. Rallis, {\em Models for certain residual
representations of unitary groups}, AMS Contemporary series (Gelbart volume); to appear (2008). 

\bibitem[GRO]{GRO} S. Gelbart and J. Rogawski, {\em  $L$-functions and Fourier-Jacobi coefficients for the unitary group ${\rm U}(3)$}, Invent. Math. 105 (1991), no. 3, 445--472.

\bibitem[GRS]{} S. Gelbart, J. Rogawski, and D. Soudry, {\emph Endoscopy,
theta-liftings, and period integrals for the unitary group 
in three variables}. Annals of Math, 145 (1997), 419-476.

\bibitem[GP1]{} B. Gross and D. Prasad, {\em
 On the decomposition of a representation of ${\rm SO}\sb n$ 
when restricted to ${\rm SO}\sb {n-1}$.}  Canad. J. Math.  44  (1992),  
no. 5, 974--1002.

\bibitem[GP2]{} B. Gross and D. Prasad, {\em
 On irreducible representations of} ${\rm SO}\sb {2n+1}\times{\rm SO}\sb 
{2m}$.  Canad. J. Math.  46  (1994),  no. 5, 930--950. 
 
 \bibitem[GR]{GR} B. Gross and M. Reeder, {\em From Laplace to Langlands 
via representations of orthogonal
groups,} Bulletin of the AMS 43 (2006) 163-205.




\bibitem[Ha]{Ha} M. Harris, {\em $L$-functions of $2\times 2$ unitary groups and factorization of periods of Hilbert modular forms},  Journal of AMS 6 (1993), 637-719.

\bibitem[HKS]{}M. Harris, S. Kudla and W. J. Sweet, {\em Theta dichotomy
for unitary groups}, J. of the AMS, vol. 9, No. 4 (1996), 941-1004.




\bibitem[II]{II} A. Ichino and T. Ikeda, {\em On the periods of automorphic forms on special orthogonal groups and the Gross-Prasad conjecture}, to appear in Geometric and Functional Analysis.

\bibitem[K]{K} S. Kudla, {\em Splitting metaplectic covers of dual reductive pairs}, 
Israel J. Math. 87 (1994), no. 1-3, 361--401. 

\bibitem[KK]{KK} K. Konno and T. Konno, {\em CAP automorphic representations of $\U_{E/F}(4)$ I. Local a-packets}, Kyushu Univ. Preprint Series in Mathematics 2003-2004.

\bibitem[Ko]{Ko} T. Konno, {\em Local Gross-Prasad conjecture for $\U(2)$}, slides of a talk given in {\em Workshop on Representation theory and Automorphic Forms}, National Univ. of Singapore, June (2008).











\bibitem[LL]{} J.-P. Labesse and R. Langlands, {\em  $L$-indistinguishability for 
${\rm SL}(2)$},  Canad. J. Math. 31 (1979), no. 4, 726--785. 






\bibitem[Mo]{Mo} C. Moen, {\em The dual pair $(\U(3)$, $\U(1)$ over a $p$-adic field}, Pacific J. Math. 127:1 (1987), 141--154.

\bibitem[MVW]{} C.Moeglin, M.-F. Vigneras and J.-L. Waldspurger, {\em Correspondances de Howe sur un corps $p$-adique}, Lecture Notes in Mathematics, 1291. Springer-Verlag, Berlin, 1987. viii+163 pp. 


\bibitem[P1]{P1} D. Prasad, {\em Trilinear forms for representations of $\GL(2)$ and local epsilon factors}, Compositio Math, vol. 75 (1990), 1-46.

\bibitem[P2]{} D. Prasad, {\em Invariant forms for representations of ${\rm GL}\sb 2$ over a local field},  Amer. J. Math. 114 (1992), no. 6, 1317--1363.

\bibitem[P3]{P3} D. Prasad, {\em On an extension of a theorem of Tunnell}, Compositio 94
(1994), 19-28.

\bibitem[P4]{P4} D. Prasad, {\em Some applications of seesaw duality to branching laws}, Math. Annalen, vol. 304 (1996), 1-20.

\bibitem[P5]{P5}D. Prasad, {\em Theta correspondence for Unitary groups},
Pacific J. of Math. 194 (2000), 427-438.

\bibitem[P6]{P6}D. Prasad, 
\emph{Relating invariant linear form and local epsilon factors 
 via global methods, with an appendix 
by H. Saito}, Duke J. Math 138 (2007) no. 2, 233-261.


\bibitem[PT]{} D.Prasad and R. Takloo-Bighash, {\em Bessel models for GSp(4)}, preprint.

 \bibitem[Pa]{Pa} A.Paul, {\em Howe correspondence for real unitary groups}, J. Funct. Anal. 159 (1998), no. 2, 384--431.
  

 

 
 


\bibitem[R]{R} S. Rallis, {\em On the Howe duality conjecture}, Compositio Math. 51 (1984), no. 3, 333--399.

\bibitem[Ro]{}J.Rogawski, {\em Automorphic representations of unitary groups
in three variables}, Annals of Mathematics Studies, vol 123, Princeton University Press, 1990.

\bibitem[Ro2]{Ro2}J. Rogawski, {\em The multiplicity formula for $A$-packets}, in {\em The zeta functions of Picard modular surfaces}, edited by R. Langlands, and D. Ramakrishnan, 395--419,   .
Univ. MontrŽal, Montreal, QC, 1992. 

  \bibitem[Sa]{} H. Saito,
\emph{On Tunnell's formula for characters of ${\rm GL}(2)$}.  Compositio Math.  85  (1993),  no. 1, 99--108. 

\bibitem[Sa2]{Sa2} H. Saito, {\em Two remarks on a theorem of Dipendra Prasad}, Pacific J. Math. 234 (2008), no. 1, 185--199. 







\bibitem[T]{} J.Tate,``Number theoretic background," in Automorphic forms,
Representations, and $L$-functions, Proc. Symposia Pure Math., AMS, 33, 3-26 
(1979).


\bibitem[Th]{} E. Thoma, \emph{Die Einschr\"ankung der Charakter 
von ${\rm G}L(n,{\Bbb F}_q)$ auf ${\rm G}L(n-1,{\Bbb F}_q)$}. Math Z. 119 (1971) 321-338.
  
\bibitem[Tu]{}	J. Tunnell, \emph{Local $\epsilon $-factors and 
characters of ${\rm GL}(2)$.}  Amer. J. Math.  105  (1983),  no. 6, 1277--1307.

 




\bibitem[Wa1]{} J.-L. Waldspurger, {\em  Correspondance de Shimura},  J. Math. Pures Appl. (9) 59 (1980), no. 1, 1--132.

\bibitem[Wa2]{} J.-L. Waldspurger, {\em Correspondances de Shimura et quaternions}, Forum Math. 3 (1991), no. 3, 219--307.

\bibitem [Wa3]{Wa3} J.-L. Waldspurger, \emph{Sur les valeurs de certaines 
fonctions $L$ automorphes en leur centre de symétrie.}
 Compositio Math.  54  (1985),  no. 2, 173--242. 



\bibitem[Wa4]{}J.-L.Waldspurger, {\em  Une Formule int\'egrale reli\'ee \`a la conjecture locale de Gross-Prasad,} arXiv: 0902.1875.


\bibitem[Wa5]{}J.-L. Waldspurger, {\em  Une Formule int\'egrale reli\'ee \`a la conjecture locale de Gross-Prasad, 2\`eme partie: extension aux repre\'sentations temp\'er'ees}, arXiv: 0904.0314.




 

\end{thebibliography}
\end{document}